\documentclass[10pt]{amsart}
\usepackage{amsmath,amssymb,amsthm}
\usepackage{bbm}
\usepackage{graphicx,tikz,mathtools}
\usepackage[left=2.5cm,right=2.5cm,top=2.5cm,bottom=2.5cm]{geometry}
\usepackage[pdftex]{hyperref}
\usepackage{cite}
\usepackage{mathrsfs} 
\usepackage{verbatim}
\usepackage{caption}
\usepackage{subcaption}
\usepackage{graphicx}
\hypersetup{
	colorlinks=true,
	linkcolor=blue, 
	citecolor=blue,
	filecolor=blue,
	urlcolor=blue,
}

\makeatletter
\@namedef{subjclassname@2020}{\textup{2020} Mathematics Subject Classification}
\makeatother

\newtheorem{theorem}{Theorem}[section]
\newtheorem{proposition}[theorem]{Proposition}
\newtheorem{corollary}[theorem]{Corollary}

\newtheorem{lemma}[theorem]{Lemma}
\newtheorem*{conjecture*}{Conjecture}

\newcommand{\triple}[1]{{\left\vert\kern-0.25ex\left\vert\kern-0.25ex\left\vert #1
        \right\vert\kern-0.25ex\right\vert\kern-0.25ex\right\vert}}

\theoremstyle{definition}

\newtheorem{remark}[theorem]{Remark}

\newcommand{\al}{\alpha}
\newcommand{\be}{\beta}
\newcommand{\de}{\delta}

\newcommand{\te}{\theta}
\newcommand{\vp}{\varphi}

\newcommand{\Om}{\Omega}

\newcommand{\phitl}{\widetilde{\phi}_{\infty}}
\def\NN{\mathbb{N}}
\def\RR{\mathbb{R}}

\def\bK{\mathbf{K}}

\newcommand{\cB}{{\mathcal B}}
\newcommand{\cC}{{\mathcal C}}

\newcommand{\cE}{{\mathcal E}}
\newcommand{\cF}{{\mathcal F}}

\newcommand{\cH}{{\mathcal H}}

\newcommand{\cK}{{\mathcal K}}

\newcommand{\cL}{{\mathcal L}}

\newcommand{\cM}{{\mathcal M}}
\newcommand{\cN}{{\mathcal N}}

\newcommand{\cR}{{\mathcal R}}

\newcommand{\cX}{{\mathcal X}}

\newcommand{\pa}{\partial}
\newcommand{\pd}{\partial}
\newcommand\minus\backslash

\newcommand{\Hud}{\dot{H}^{\frac{1}{2}}}
\newcommand{\Dud}{(-\Delta)^{\frac{1}{2}}}
\newcommand{\Sjk}{\sum_{j=1}^k}
\newcommand{\ls}{\lesssim}

\newcommand\lan\langle
\newcommand\ran\rangle

\newcommand{\supp}{\operatorname{supp}}

\renewcommand\leq\leqslant
\renewcommand\geq\geqslant

\numberwithin{equation}{section}

\begin{document}
 
\title[ Smooth nonradial  stationary solutions to the SQG equation]{ Smooth nonradial  stationary solutions \\ to SQG  via the half-Yamabe equation}

\author[\'A. Castro]{\'{A}ngel Castro}
\address{    
\newline
\textbf{{\small \'{A}ngel Castro}} 
\vspace{0.15cm}
\newline \indent Instituto de Ciencias Matem\'aticas, Consejo Superior de Investigaciones Cient\'\i ficas, 28049 Madrid, Spain}
\email{angel\_castro@icmat.es}

\author[A. J. Fern\'andez]{Antonio J.\ Fern\'andez}
\address{ \vspace{-0.4cm}
\newline 
\textbf{{\small Antonio J. Fern\'andez}} 
\vspace{0.15cm}
\newline \indent Departamento de Matem\'aticas, Universidad Aut\'onoma de Madrid, 28049 Madrid, Spain}
 \email{antonioj.fernandez@uam.es}
 
\author[C. Garc\'ia]{Claudia Garc\'ia}
\address{ \vspace{-0.4cm}
\newline
\textbf{{\small Claudia Garc\'ia}}
\vspace{0.15cm}
\newline \indent Departamento de Matem\'atica Aplicada, Universidad de Granada, 18071 Granada, Spain}
\email{claudiagarcia@ugr.es}

%
%

\begin{abstract}
    We prove the existence of infinitely many smooth nonradial stationary solutions to the surface quasi-geostrophic (SQG) equation with finite kinetic energy. Our construction is based on a family of nonradial sign-changing solutions to the two-dimensional half-Yamabe equation, obtained via a Lyapunov--Schmidt reduction and concentrated at the vertices of a regular polygon. As the number of vertices tends to infinity, the associated stationary SQG solutions converge to a radial stationary profile centered at the origin, together with a lower-order vortex sheet correction.
\end{abstract}

\maketitle
 
\section{Introduction}

 The surface quasi-geostrophic (SQG) equation, which models the temperature on a fluid surface under the effect of the Coriolis force, is a fundamental model used in geophysical fluid dynamics. It reads as follows:
\begin{equation} \label{E.SQG}
\left\{
\begin{aligned}
\, & \pd_t \theta + \nabla^{\perp} \psi \cdot \nabla \theta = 0 \quad && \textup{in } \RR^2 \times (0,T)\,,\\
& \psi = (-\Delta)^{-\frac12} \theta && \textup{in } \RR^2 \times [0,T)\,, \\
& \theta(\cdot,0) = \theta_0 && \textup{in } \RR^2\,.
\end{aligned}
\right.
\end{equation}
Here, $\theta$ represents the temperature or potential vorticity, and the associated velocity field $u:=\nabla^\perp\psi \equiv (\partial_{x_2} \psi, -\partial_{x_1}\psi)$, which is solenoidal, is linked to the potential vorticity through the second equation in \eqref{E.SQG}. In addition to its relevance in geophysical fluid dynamics, the SQG equation is closely related to the incompressible 3D Euler equations. Indeed, it is widely regarded as a good two-dimensional model for the vorticity evolution in 3D Euler \cite{CMT94}.

The SQG equation is locally well-posed in $H^s$, for $s > 2$, see \cite{CMT94}; and in $C^{k,\alpha}$, for $k \geq 1$ and $\al \in (0,1)$, see  \cite{W05}. However, unlike 2D Euler, the formation of singularities in finite time from smooth initial data remains open. 
The global existence of weak solutions holds in $L^p$, for $p>\frac{4}{3}$, see \cite{R95, M08}. Nevertheless, very recently, many results about nonuniqueness of weak solutions were obtained, see \cite{BSV19, CFMS25, CKL21, DP23}. Finally, note that, in \cite{CordobaZoroa22, jeongkimsqg}, the authors find critical exponents where ill-posedness occurs, with solutions losing regularity instantaneously in spaces where uniqueness holds. See also \cite{CZOpp,CLZpp} for ill-posedness in supercritical spaces.

There are very few results concerning the existence of smooth global solutions to the SQG equation. Moreover, most known examples are either traveling or rotating solutions evolving by rigid motions, see \cite{ADdPMW21, CCGS20, GS19}. The only known nontrivial global smooth solution to the SQG equation that does not evolve by rigid motion was only recently constructed in \cite[Corollary 1.3]{CFMS25}. 

In this paper, we are mainly concerned with the existence of smooth finite-energy stationary solutions to the SQG equation in $\RR^2$. Radial examples are easy to construct: indeed, it suffices to pick any smooth radially symmetric function $\te$ with enough decay at infinity. Then, choosing $\psi$ as in the second equation in \eqref{E.SQG}, it is immediate to check that $\te$ is a smooth stationary solution to \eqref{E.SQG}. The natural question here is therefore the existence (or not) of smooth nonradial stationary solutions to the SQG equation with finite kinetic energy. Let us highlight that the existence of  homogeneous (unbounded) nonradial stationary solutions with infinite energy has been recently proven both in \cite{abejeongserrano} and \cite{pacual-caballo}.  

In recent years there has been an emergence of rigidity results for steady two-dimensional fluids. We refer in particular to \cite{GSPSY21} for outstanding rigidity results concerning the SQG equation, both in the smooth setting and for patch type solutions. Nevertheless, the existence of smooth nonradial finite-energy stationary solutions to the SQG equation remains a well-known open problem. Our first main result answers this question affirmatively. 

We construct the first smooth nonradial stationary solutions to the SQG equation in $\RR^2$ with finite kinetic energy. More precisely, our first main result reads as follows:

\begin{theorem} \label{T.mainThmSQG}
There exist infinitely many smooth nonradial stationary solutions to \eqref{E.SQG} with finite kinetic energy. Furthermore, if we index these solutions as $(\theta_k)_{k=k_\star}^{\infty}$, it follows that, up to a subsequence,
\begin{equation} \label{E.distributionalConvergence}
\log k \Big( \theta_k + U^3\Big) \xrightharpoonup{\hspace{0.4cm}} \frac{\pi}{\sqrt{2} } \, \sigma_{\partial \mathbb{D}} + R \quad \textup{\textit{in} } \mathcal{D}'(\RR^2)\,, \quad \textup{\textit{as} } k \to \infty\,.
\end{equation}
Here:
\begin{itemize}
    \item $U$ denotes the standard bubble, which is given by $U(x):= (1+|x|^2)^{-\frac12}$. 
    \item $\sigma_{\partial \mathbb{D}}$ denotes the arc-length measure on $\partial \mathbb{D} := \{x \in \RR^2: |x|=1\}$.
    \item $R \in \dot{H}^{s} \cap L^p$ for all $1 \leq p < \infty$ and all $0 < s < \frac12$.
\end{itemize}
\end{theorem}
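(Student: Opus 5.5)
Any function with $\theta = F(\psi)$ gives a stationary solution, since then $\nabla^\perp\psi\cdot\nabla\theta = F'(\psi)\,\nabla^\perp\psi\cdot\nabla\psi = 0$. I will take $F(s)=|s|s^{2}$, i.e.\ $\theta=|\psi|\psi^{2}$ with sign conventions adjusted so that $\psi$ solves the half-Yamabe equation
\begin{equation*}
(-\Delta)^{\frac12}\psi = -|\psi|\psi^{2}\,\cdot(-1) \quad\text{or, after normalisation,}\quad (-\Delta)^{\frac12} u = |u|^{\frac{2}{n-1}} u = |u| u^{2}\ \ (n=2),
\end{equation*}
the critical exponent being $\frac{n+1}{n-1}=3$. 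With $\psi=-u$ we get $\theta=-|u|u^{2}$, which is consistent with the statement's limit $\theta_k + U^3$. The bubble $U=(1+|x|^2)^{-1/2}$ solves $(-\Delta)^{1/2}U=U^3$. So the task reduces to producing infinitely many nonradial, sign-changing, finite-energy solutions $u_k$ of $(-\Delta)^{1/2}u=|u|u^2$ in $\RR^2$. Smoothness will follow from regularity theory for this critical equation: bootstrap in $L^p$ and Hölder spaces, using decay $|u|\lesssim U$. Finite kinetic energy means $\psi\in\Hud$, equivalently $\theta\in \dot H^{-1/2}$, which is automatic for finite-energy solutions.

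\textbf{Construction, following del Pino--Musso--Pacard--Pistoia.} Take the ansatz
\begin{equation*}
u_k = U - \sum_{j=1}^k U_{\mu,\xi_j} + \phi,\qquad U_{\mu,\xi}(x)=\mu^{-\frac12}U\big((x-\xi)/\mu\big),
\end{equation*}
where $\xi_j = (\cos\frac{2\pi j}{k},\sin\frac{2\pi j}{k})$ are the vertices of the regular $k$-gon on $\partial\mathbb{D}$. Then proceed in three steps.
\begin{enumerate}
\item Impose the symmetries: invariance under rotation by $2\pi/k$, reflection $x_2\mapsto -x_2$, and Kelvin inversion $u\mapsto |x|^{-1}u(x/|x|^2)$. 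These kill most of the kernel of the linearization. Nondegeneracy of $U$ for the half-Yamabe equation (the kernel is spanned by dilation and translations) is the input I would cite.
\item Perform a Lyapunov--Schmidt reduction in weighted norms. Solve for $\phi$ orthogonally to $\partial_\mu U_{\mu,\xi_j}$, with $\|\phi\|$ controlled by the error. This reduces everything to one scalar equation in $\mu$.
\item Solve the reduced equation. Balancing the interaction of each small bubble with the central $U$ against the interactions among the small bubbles forces $\mu\sim (k\log k)^{-2}$, or a similar scale, up to constants. The logarithm arises because in dimension $n=2$ with the half-Laplacian, the sum $\sum_{j\neq i}|\xi_i-\xi_j|^{-1}$ behaves like $k\log k$. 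A sign change of the reduced function then gives a solution.
\end{enumerate}

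\textbf{Convergence.} Here $\theta_k+U^3 = -(|u_k|u_k^2 - U^3)$. The bubble contributions $U_{\mu,\xi_j}^3$ each have mass $\mu^{1/2}\int U^3 = 2\pi\mu^{1/2}$. Summed over $k$ points equidistributed on the circle, this gives a measure converging to a multiple of $\sigma_{\partial\mathbb{D}}$ with total mass $\approx k\mu^{1/2}$. Multiplying by $\log k$ requires $k\mu^{1/2}\log k\to$ const, consistent with $\mu^{1/2}\sim (k\log k)^{-1}$. The precise constant $\pi/\sqrt2$ comes from the constant in the reduced equation. The remaining terms are cross terms $3U^2\sum_j U_{\mu,\xi_j}$ plus $\phi$-terms. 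After multiplication by $\log k$, these stay bounded in $L^p$, and in $\dot H^s$ for $s<\frac12$, and their weak limit defines $R$. Part of $\log k\sum_j U_{\mu,\xi_j}$ behaves like $\log k\cdot k\mu^{1/2}\log(1/\text{dist})$, which is a logarithmic potential of the circle measure. This lies in every $L^p$ and in $\dot H^{s}$ for $s<1/2$ but not in $\dot H^{1/2}$, which explains the range of exponents.

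\textbf{Main obstacle.} The hardest part is the reduction with the sharp $\log k$ bookkeeping. One needs error estimates in weighted norms that are uniform in $k$ and precise enough to identify the leading constant of the reduced equation. The small bubbles' interactions are only borderline summable in dimension two, so the weights must be chosen carefully: I would use norms of the form $\sup(1+|x|)^{\cdot}\sum_j(\dots)$ adapted to the polygon.
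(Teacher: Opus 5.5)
The genuine gap is your Step 2, which you yourself flag as the main obstacle. Everything rests on an estimate $\|\phi\|\le C\|\Pi^\perp\mathcal{L}^*_{k,t}\phi\|$ for the projected linearized operator that is uniform in $k$, and ``the weights must be chosen carefully'' is not an argument for it. The scheme you propose to follow is the weighted-norm approach of del Pino--Musso--Pacard--Pistoia. The paper recalls that the fractional version of this scheme (Garrido--Musso) breaks down exactly at $s=\frac12$, so the step you leave open is precisely where the standard route is reported to fail.

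The paper instead performs the reduction directly in $\dot H^{1/2}$ and proves the uniform bound (Lemma~\ref{L.aprioriEstimate}) by contradiction. It normalizes $\|\phi_k\|^2_{\dot H^{1/2}}=k$ (unit energy per sector), cuts off around $\xi_1$ at the intermediate scale $\sqrt{t\delta_k}$, rescales by $t\delta_k$, and shows the limit lies in $\mathrm{span}\{Z^{(0)},Z^{(1)},Z^{(2)}\}$ and vanishes. Your plan has no substitute for one step here: the uniform $\dot H^{1/2}$ bound on the cut-off rescaled functions. The cutoff produces the term $(t\delta_k)^{-1}\int_{B^+_{2\sqrt{t\delta_k}}(\Xi_{1,k,t})}\Phi_k^2$. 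Ordinary Sobolev only controls this by the total energy $\|\nabla\Phi_k\|^2_{L^2(\RR^3_+)}=k$, not by the energy $1$ of one sector. The improved Sobolev inequality for $k$-fold symmetric functions (Proposition~\ref{P.ImprovedSobolev}) supplies exactly this. It is proved via quasi-additivity of Newtonian capacity over the $k$ balls of radius $\sim\sqrt{\delta_k}$, which holds because $\sqrt{\delta_k}\,k\log k\lesssim 1$.

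Kelvin invariance is then essential to exclude $Z^{(1)}$ (radial translation) in the blow-up limit and $Z^{(0)}$ in the limit of $k^{-1/2}\phi_k$. Your ansatz is not Kelvin invariant as written: this requires $|\xi_j|^2+\mu^2=1$, not $\xi_j\in\partial\mathbb{D}$. Finally, $\pi/\sqrt2$ equals $\sqrt{t_{\max}}$, where $t_{\max}=\pi^2/2$ maximizes $\pi\sqrt{2t}-t$ in the expansion of Proposition~\ref{P.energyExpansion}. So the reduced problem must be computed to order $o(1/\log k)$, not just up to constants.

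Several other steps are wrong or missing.

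(i) You never show the solutions are nonradial. The paper needs Proposition~\ref{P.nonradial}, which compares $\|\sum_jU_{j,k,t}-\sum_jU_{j,k,t}(\mathcal{R}_{\pi/k}\,\cdot)\|^2_{\dot H^{1/2}}\gtrsim k$ with $\|\phi_k\|_{\dot H^{1/2}}\lesssim(\log k)^{-1}$.

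(ii) Finite kinetic energy means $\|\nabla^\perp\psi\|_{L^2}=\|\theta\|_{L^2}<\infty$, i.e.\ $\psi\in L^6$. The condition $\psi\in\dot H^{1/2}$ only gives $\theta\in\dot H^{-1/2}$, which is a different quantity. What yields $L^6$ is the $L^\infty$ bound (Proposition~\ref{P.bounded}) together with $\dot H^{1/2}\subset L^4$.

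(iii) As written, $|u|u^2=|u|^3$ is even. The equation must be $(-\Delta)^{1/2}u=u^3$; otherwise the negative bubbles are not approximate solutions.

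(iv) In the convergence step, the $\log k$-rescaled remainders are not bounded in $L^p$ for large $p$. Because of the bubble peaks, $\|\log k\,U^2\sum_jU_{\mu,\xi_j}\|^p_{L^p}\gtrsim k^{p-3}(\log k)^{2p-4}\to\infty$ for $p\ge3$, so $R\in L^p$ cannot come from uniform bounds. The paper avoids expanding the cube altogether. It uses $\theta_k+U^3=(-\Delta)^{1/2}(\psi_k+U)=\sum_jU^3_{j,k,t_k}+(-\Delta)^{1/2}\phi_k$, so $R=(-\Delta)^{1/2}w_\infty$, where $w_\infty$ is the weak $\dot H^{1/2}$-limit of $\log k\,\phi_k$. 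Passing to the limit in the equation for $\log k\,\phi_k$ gives the explicit form $R=3U^2(w_\infty+\frac{1}{2\sqrt2}\int_{\partial\mathbb{D}}|\cdot-\xi|^{-1}\,d\sigma(\xi))$, from which the regularity is read off.
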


The stationary solutions to the SQG equation constructed in Theorem \ref{T.mainThmSQG}, $(\theta_k)_{k=k_\star}^\infty$, are nonradial perturbations of a negative radial function $-U^3 = -(1+|x|^2)^{-\frac32}$ plus a circular vortex sheet type correction at order $(\log k)^{-1}$. Note that the more regular remainder $R$ is to some extent explicit, and given in terms of the perturbations $(\te_k)_{k=k_\star}^\infty$ (see \eqref{E.Rdistributional}). Moreover, it is worth pointing out that the kinetic energy of these solutions is not uniformly bounded in $k$ as $k \to \infty$.  

The proof of Theorem \ref{T.mainThmSQG} relies on the following classical observation: if $\psi$ is a smooth solution to an autonomous elliptic PDE of the form
\begin{equation} \label{E.autonomousPDE}
(-\Delta)^{\frac12} \psi - F(\psi) = 0\quad \textup{in } \RR^2\,,
\end{equation}
for some smooth function $F$, then $\theta = F(\psi)$ defines a smooth, stationary solution to \eqref{E.SQG}. There exist many examples of functions $F$ for which one can easily find smooth solutions to \eqref{E.autonomousPDE}. The key difficulty here is precisely finding nonradial solutions, for some smooth function $F$. We choose and fix $F(t) = t^3$. This choice immediately drives us to the well-known two-dimensional half-Yamabe equation, namely
\begin{equation} \label{E.Yamabe}
(-\Delta)^{\frac12} \psi = \psi^3 \quad \textup{in } \RR^2\,.
\end{equation}

It is well known (see, for instance, \cite{CLO06, LZ95, L04}) that, under suitable mild assumptions, all positive solutions to \eqref{E.Yamabe} are of the form
\begin{equation*}  
U_{\de,\xi}(x) := \frac{1}{\sqrt{\de}}\, U \Big( \frac{x-\xi}{\de} \Big) = \frac{\sqrt{\de}}{(\de^2 + |x-\xi|^2)^\frac12}\,, \quad \textup{for some } \de > 0 \textup{ and } \xi \in \RR^2\,,
\end{equation*}
where we recall that $U$ is the standard bubble, which is given by
\begin{equation*} 
U(x) := \frac{1}{(1+|x|^2)^{\frac12}}\,.
\end{equation*}
Consequently, if we want to rely on \eqref{E.Yamabe} in order to prove Theorem \ref{T.mainThmSQG}, we should look for smooth nonradial finite-energy sign-changing solutions to \eqref{E.Yamabe}. We therefore seek this kind of solutions to \eqref{E.Yamabe}, and our second main result establishes their existence.

More precisely, our second main result, which concerns the two-dimensional half-Yamabe equation \eqref{E.Yamabe}, and is completely independent of the SQG equation, reads as follows:

\begin{theorem} \label{T.mainThmYamabe}
There exists $k_0 \in \NN$ such that, for all $k \geq k_0$, \eqref{E.Yamabe} has a solution of the form
$$
\psi_k(x) = \sum_{j=1}^k \frac{1}{\widehat{\delta}_k^{\,\frac12}}\, U \bigg( \frac{x-\xi_j^k}{\widehat{\delta}_k} \bigg) - U(x) + \phi_k(x)\,.
$$
Here, $U$ is the standard bubble given in Theorem \ref{T.mainThmSQG}, and $\phi_k \in \dot{H}^{\frac12}$, $(\xi_j^k)_{j=1}^k \subset \RR^2$ and $\widehat{\delta}_k > 0$ satisfy
\begin{align*}
\|\phi_k\|_{\dot{H}^{\frac12}} \to 0\,, \quad \textup{as } k \to \infty\,,
\end{align*}
and
$$
\xi_j^k \sim \Big( \cos \big( \tfrac{2\pi(j-1)}{k} \big), \, \sin \big( \tfrac{2\pi (j-1)}{k} \big) \Big)\,, \qquad \widehat{\delta}_k \sim \frac{1}{(k \log k)^2}\,. 
$$
\end{theorem}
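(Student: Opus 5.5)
We propose a Lyapunov--Schmidt reduction in the spirit of the del Pino--Musso--Pacard--Pistoia constructions of sign-changing solutions to the Yamabe equation, adapted to the nonlocal operator $(-\Delta)^{\frac12}$ in $\RR^2$.

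\textbf{Ansatz and symmetry.} Work in the space $\cH$ of functions in $\Hud$ that are invariant under rotation by $2\pi/k$, under the reflection $x_2\mapsto -x_2$, and under the Kelvin transform $x\mapsto x/|x|^2$ (with the conformal weight $|x|^{-1}$). The nonlinearity $\psi\mapsto\psi^3$ is compatible with all three. Take the approximate solution
\[
W = \sum_{j=1}^k U_{\de,\xi_j} - U, \qquad \xi_j = \Big(\cos\tfrac{2\pi(j-1)}{k},\,\sin\tfrac{2\pi(j-1)}{k}\Big).
\]
The bubbles are centered exactly on the unit circle, which is fixed by the Kelvin transform. This leaves a single free parameter $\de$, which we write as $\de = d\,(k\log k)^{-2}$ with $d$ in a compact subinterval of $(0,\infty)$. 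The ansatz $\psi = W+\phi$ with $\phi\in\cH$ then reduces \eqref{E.Yamabe} to
\[
L\phi = -E - N(\phi), \qquad L = (-\Delta)^{\frac12} - 3W^2,
\]
where the error is $E = (-\Delta)^{\frac12}W - W^3$ and $N(\phi)$ collects the quadratic and cubic terms in $\phi$.

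\textbf{Linear theory.} The kernel of the linearization at a single bubble is spanned by the dilation and translation generators. Translations are killed by the symmetries, and the dilation generators at the bubbles are related by rotation. The key step is to prove that the operator $L$, projected onto the $L^2$-orthogonal complement of $Z=\sum_j \pa_\de U_{\de,\xi_j}$, is uniformly invertible on $\cH$. This is done by the standard blow-up/contradiction argument.
\begin{itemize}
\item Near each $\xi_j$, rescaling by $\de$ produces a limit in the kernel of $(-\Delta)^{\frac12}-3U^2$. This kernel is nondegenerate by the known classification of its elements, so the limit must vanish.
\item Away from the bubbles, the limit lies in the kernel of $(-\Delta)^{\frac12}-3U^2$ around $-U$. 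The Kelvin symmetry excludes the dilation mode $\tfrac12 U + x\cdot\nabla U$, which is odd under the Kelvin transform, and rotations are excluded by the other symmetries.
\end{itemize}
We must also exclude mass concentrating near the circle between bubbles. Here the $\Hud$ norm is conformally invariant, and the interaction of $k$ bubbles of size $\de\sim(k\log k)^{-2}$ is controlled by $\sum_{j\ne1}\de/|\xi_1-\xi_j|\sim \de k\log k$.

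\textbf{Error estimate.} Next we bound $\|E\|_{\dot H^{-\frac12}}$. Near $\xi_j$, the dominant term is $3U_{\de,\xi_j}^2\big(\sum_{i\neq j}U_{\de,\xi_i}-U\big)$. In the $\dot H^{-\frac12}$ dual norm we expect
\[
\sum_{i\ne j}U_{\de,\xi_i}(\xi_j) \approx \sqrt{\de}\,\frac{k}{\pi}\log k,
\]
while $U(\xi_j)=2^{-1/2}$. Both are multiplied by $\sqrt\de$ times the $L^{4/3}$ norm of $U^2$. We also need the error coming from the region where $-U$ interacts cubically. The contraction mapping theorem then produces $\phi = \phi(d)$ with $\|\phi\|_{\Hud}\to0$ as $k\to\infty$, solving the projected problem.

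\textbf{Reduced problem: the main obstacle.} It remains to kill the Lagrange multiplier, which amounts to finding a critical point in $d$ of the reduced energy $J(W+\phi(d))$. The expansion reads
\[
J = k\,c_0 + c_0' + k\Big[a\,\de\,k\log k\,(1+o(1)) - b\,\sqrt2\,\sqrt{\de}\,\big(\sqrt\de\cdots\big)\Big] + o(\cdot).
\]
More precisely, the bubble--bubble interaction behaves like $\de\cdot k\log k$ per bubble. It must be balanced against the interaction with $-U$, which contributes $-\sqrt\de\cdot U(\xi_j)\sqrt\de$-type terms of order $\de$, and also against $\de\log(1/\de)$ corrections. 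Balancing $\de k\log k$ against the other terms is what fixes $\de\sim(k\log k)^{-2}$. We expect this to be the hardest step. The delicate part is computing the expansion precisely enough to obtain a nondegenerate (e.g. strict min/max) critical point of the leading function of $d$, stable under $o(1)$ perturbations. In doing so we must handle the logarithmic sums and show that $\phi$ contributes only lower-order terms. The last point uses $\|\phi\|^2\ll$ the main terms together with $C^1$ dependence of $\phi$ on $d$. Once a stable critical point $\widehat d$ is found, we set $\widehat\de_k = \widehat d\,(k\log k)^{-2}$ and $\phi_k=\phi(\widehat d)$, which gives Theorem \ref{T.mainThmYamabe}.
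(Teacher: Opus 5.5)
Your overall architecture matches the paper's: the symmetry class, one scale parameter, projection off the sum of dilation modes, a contradiction argument, a contraction, and a one-dimensional reduced problem. However, the step that actually produces the solution is left open, and as written it rests on a wrong scaling. The interaction of a bubble with $-U$ is of order $\sqrt{\delta}$, not $\delta$: $\int_{\RR^2} U_{\delta,\xi_j}^3\,U\,dx \approx \sqrt{\delta}\,U(\xi_j)\int_{\RR^2}U^3\,dy=\pi\sqrt{2\delta}$. The bubble--bubble interaction, by contrast, is $\sum_{i\neq j}\int_{\RR^2} U_{\delta,\xi_j}^3U_{\delta,\xi_i}\,dx\approx 2\pi\delta\sum_{i\neq j}|\xi_i-\xi_j|^{-1}\approx 2\delta k\log k$. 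The interaction with $-U$ enters the energy with a plus sign and the same-sign interactions with a minus sign. Summing over $j$ with $\delta=d\,(k\log k)^{-2}$ gives
\[
I(W+\phi)=\frac{k+1}{4}\|U\|_{L^4}^4+\frac{1}{\log k}\big(\pi\sqrt{2d}-d\big)\big(1+o(1)\big).
\]
It is the balance $k\sqrt{\delta}\sim \delta k^2\log k$ that fixes $\delta\sim(k\log k)^{-2}$. The different homogeneity in $d$ of the two terms is what produces a nondegenerate maximum, at $d=\pi^2/2$. With the orders you propose (both interactions linear in $\delta$, plus $\delta\log(1/\delta)$ corrections), the leading part would be linear in $d$ and have no critical point. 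Balancing $\delta k\log k$ against $\delta\log(1/\delta)$ also does not give $(k\log k)^{-2}$.

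The margin is thin as well. The main terms are only $O(1/\log k)$, so you need the sharp rate $\|E\|_{\dot H^{-1/2}}\lesssim 1/\log k$ to make the $\phi$-contribution $O(\log^{-2}k)$ negligible. The dominant part of $E$ is not the near-bubble term you single out. It is $3U^2\sum_j U_{\delta,\xi_j}$ in the bulk, where the far field of the bubbles is $\frac{\sqrt d}{2\pi\log k}\int_{\partial\mathbb{D}}|x-\xi|^{-1}\,d\sigma(\xi)$. Finally, to kill the multiplier at the critical point you must show $\langle Z,\partial_d(W+\phi)\rangle\neq0$. The paper shows the $W$-part is of order $k$ while the $\phi$-part is only $O(k/\log k)$.

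In the linear theory, ``the standard blow-up argument'' hides the genuinely new difficulty of this endpoint case. Normalize the energy per sector, $\|\phi_k\|_{\dot H^{1/2}}^2=k$. You must then show that $\phi_k$, cut off at scale $\sqrt{\delta_k}$ around $\xi_1$ and rescaled, stays bounded in $\dot H^{\frac12}$ uniformly in $k$. Otherwise there is no local limit and the contradiction does not close. In the harmonic extension $\Phi_k$, the cutoff produces the term $\delta_k^{-1}\int_{B^+_{2\sqrt{\delta_k}}(\Xi_1)}\Phi_k^2$. Sobolev's inequality, even combined with the $k$-fold symmetry, only bounds this term by $O(k^{2/3})$.

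The paper closes this gap with an improved Sobolev inequality for $k$-fold symmetric functions (Proposition~\ref{P.ImprovedSobolev}): $\frac{1}{t\delta_k}\int_{B_{2\sqrt{t\delta_k}}(\Xi_{1,k,t})}|v|^2\le \frac{C}{k}\int_{\RR^3}|\nabla v|^2$. It proves this through quasi-additivity of the Newtonian capacity over the $k$ small balls, which holds exactly because $\sqrt{\delta_k}\,k\log k=O(1)$, together with Maz'ya's capacitary inequality. It also tests the equation against symmetrized rescaled test functions, which turns the $o(\sqrt k)$ errors into $o(1)$.

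A smaller but concrete error concerns the ansatz. One has $\bK[U_{\delta,\xi}]=U_{\delta/s,\,\xi/s}$ with $s=|\xi|^2+\delta^2$. So bubbles centered exactly on the unit circle are not Kelvin invariant, and your $W$ is not in $\cH$. The centers must satisfy $|\xi_j|^2+\delta^2=1$, as in the paper.
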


We refer to Figures \ref{k5} and \ref{k12} for a schematic picture of the solutions constructed in Theorem \ref{T.mainThmYamabe} and its level curves, with $k = 5$ and $k = 12$, respectively.

Note that, having Theorem \ref{T.mainThmYamabe} at hand, the proof of Theorem \ref{T.mainThmSQG} reduces to showing that, for all $k$ sufficiently large, the solutions to \eqref{E.Yamabe} constructed in Theorem \ref{T.mainThmYamabe} belong to $L^6 \cap C^{\infty}$, and are nonradial.

Theorem \ref{T.mainThmYamabe} provides the first finite-$\Hud$-energy sign-changing solutions to \eqref{E.Yamabe}. This result is inspired by the construction in \cite{GM16}, which in turn was motivated by the seminal paper \cite{dPMPP11}. Nevertheless, our result substantially differs from \cite{GM16, dPMPP11} from a technical point of view. While \cite{GM16, dPMPP11} use the so-called \textit{inner-outer gluing} method, we directly attack the problem using a Lyapunov-Schmidt reduction in $\Hud$. Our approach is actually inspired by the proof of \cite[Theorem 1.2]{PV19}. 

The paper \cite{GM16} deals with the more general fractional Yamabe problem
\begin{equation} \label{E.YamabeMonica}
(-\Delta)^s \psi = |\psi|^{\frac{4s}{n-2s}}\psi \quad \textup{in } \RR^n\,,
\end{equation}
with $s \in (0,1)$ and $n > 2s$. We refer to \cite{CG11,GQ13} for a detailed introduction to the fractional Yamabe problem. In \cite{GM16}, the authors construct nonradial sign-changing solutions to \eqref{E.YamabeMonica} in the case where $n \geq 3$ and $s \in (\frac12,1)$. We believe that their approach could be easily adapted to handle the case where $n =2$ and $s \in (\frac12,1)$. Nevertheless, as explicitly pointed out in \cite[p. 89]{GM16}, their \textit{gluing} scheme breaks down at the endpoint $s = \frac12$. The present work resolves precisely this endpoint case. It is also worth pointing out that, in \cite[p. 89]{GM16}, the authors conjecture that the rate
$\widehat{\de}_k \sim k^{-2}$ should hold for all $s \in (0,1)$. However, Theorem \ref{T.mainThmYamabe} shows that, for $n = 2$ and $s = \frac12$, this is not the case. 

\subsection{Strategy of the proofs} 
The proof of Theorem \ref{T.mainThmSQG} is divided into two steps. We first prove Theorem \ref{T.mainThmYamabe} using a Lyapunov-Schmidt reduction argument. Since the solutions to the two-dimensional half-Yamabe equation constructed in Theorem \ref{T.mainThmYamabe} yield stationary solutions to the SQG equation, to conclude the proof of Theorem \ref{T.mainThmSQG}, we are only missing to show that the resulting solutions are indeed smooth and nonradial, have finite kinetic energy, and satisfy the distributional convergence in \eqref{E.distributionalConvergence}. This is precisely the second step, which is developed in Section \ref{S.nonradial}. 

We now discuss the proof of Theorem \ref{T.mainThmYamabe}. We start off with the approximate solution 
\begin{equation}\label{intro:Ukt}
U_{k,t}(x) = \sum_{j=1}^k U_{j,k,t}(x) - U(x)\,,
\end{equation}
where $U$ is the standard bubble, and $U_{1,k,t}, \ldots, U_{k,k,t}$ are scaled and translated bubbles concentrating at the $k$ vertices of a regular polygon (see Figures \ref{k5} and \ref{k12} for a representation of $U_{k,t}$). Here, $k \in \NN$ has to be sufficiently large, while $t\in[a,b]$, for some fixed $0<a<b$, is a free parameter to be adjusted. 

The gist of the proof is to construct, for all $k$ sufficiently large, an exact solution to \eqref{E.Yamabe} of the form
$$
U_{k,t}+\phi_{k,t}\,, \quad \quad \textup{with} \quad \quad  \phi_{k,t} \xrightarrow{k \to \infty} 0 \textup{ in } \dot{H}^\frac12\,.
$$
We will address the problem by linearizing around $U_{k,t}$ and solving
the corresponding problem for $\phi_{k,t}$. It turns out that this is possible if the concentration rate $\delta_k$ and the parameter $t \in [a,b]$ are suitably chosen. 

Linearizing around $U_{k,t}$, and applying the inverse half-Laplacian operator, the two-dimensional half-Yamabe equation \eqref{E.Yamabe} can be rewritten as
\begin{equation}\label{intro:eq}
\cL_{k,t}^{*} \phi = \cE_{k,t}^{*} + \cN_{k,t}^{*} (\phi)\,, \quad \phi \in \Hud\,,
\end{equation}
where
\begin{equation*}
 \cL_{k,t}^{*} \phi:=\phi-(-\Delta)^{-\frac12}(3U^2_{k,t}\phi)\,, \quad    \cE_{k,t}^{*}:=(-\Delta)^{-\frac12}U^3_{k,t}-U_{k,t}\,, \quad \cN_{k,t}^{*} (\phi):=(-\Delta)^{-\frac12}(3U_{k,t}\phi^2+\phi^3)\,.
\end{equation*}
The main challenge to solve \eqref{intro:eq} is that the operator $\cL_{k,t}^{*}$ has an approximated nonempty kernel as $k \to \infty$. This prevents the use of any standard fixed point argument. We will obtain a solution to \eqref{intro:eq}, for all $k$ sufficiently large, using a Lyapunov-Schmidt reduction argument. 

The idea of using the number of bubbles $k$ as a parameter of the problem was pioneered in \cite{WY10}, and successfully used in \cite{dPMPP11,PV19,MM19,GM16}. Our paper is however the first time this idea is implemented in a two-dimensional problem. Taking into account that the existence of concentrating solutions in critical problems is strongly affected by the dimension, we believe this to be of independent interest. 
 
One of the key difficulties we have to face in proving Theorem \ref{T.mainThmYamabe} is to establish suitable a priori (uniform in $k$) estimates for a projected version of $\cL_{k,t}^{*}$, see Subsection \ref{SS.LT}. Although our proof is inspired by \cite{PV19}, it requires several new ingredients. Let us highlight the improved Sobolev type inequality for $k$-fold symmetric functions we prove in Proposition \ref{P.ImprovedSobolev}.

\subsection{Organization of the paper} In Section \ref{S.PIT} we prove several integral estimates that will be crucial in the proof of Theorem \ref{T.mainThmYamabe}. In particular, we establish an improved Sobolev type inequality in $\dot{H}^1(\RR^3)$ which we believe is of independent interest (cf. Proposition \ref{P.ImprovedSobolev}). In Section \ref{S.2} we prove Theorem \ref{T.mainThmYamabe} using a Lyapunov-Schmidt reduction argument. Section \ref{S.nonradial} is then devoted to proving Theorem \ref{T.mainThmSQG}. The first part of the section, which is actually independent of Theorem \ref{T.mainThmSQG}, is devoted to showing some extra properties of the solutions to \eqref{E.Yamabe} constructed in Theorem \ref{T.mainThmYamabe}. Having these extra properties at hand, in the second part, we finish the proof of Theorem \ref{T.mainThmSQG}. The paper concludes with an appendix containing some useful calculations for the proof of Theorem \ref{T.mainThmYamabe}.

\subsection{Notation} The space  $\dot{H}^{\frac12} := \dot{H}^{\frac12}(\RR^2)$ denotes the completion of $C_c^{\infty}(\RR^2)$ with respect to the norm  
$$
\|\phi\|_{\dot{H}^\frac12} := \left(\int_{\RR^2} |(-\Delta)^{\frac14}\phi|^2 dx\right)^{\frac12}\,, \quad \textup{for all } \phi \in \dot{H}^{\frac12}\,.
$$
Note that $\dot{H}^{\frac12}$ is a Hilbert space endowed with the scalar product 
$$
\langle \phi,\, \psi \rangle := \int_{\RR^2} (-\Delta)^{\frac14}\phi (-\Delta)^{\frac14}\psi\,  dx \,, \quad \textup{for all } \phi,\, \psi \in \dot{H}^{\frac12}\,.
$$
Throughout the paper, the expression $A\lesssim B$ means that there exists a constant $C$, independent of $k$ and $t$, such that $A\leq C B$. We use the same convention for `` $\gtrsim$ '' and `` $\sim$ ''. Also, if we do not specify the domain of an integral, we assume it is $\mathbb{R}^2.$ Likewise, unless specified otherwise, $L^p$ norms are considered in the whole of $\RR^2$. Note as well that we use here the convention $(a,b)^{\perp} = (b,-a)$ for all $(a,b) \in \RR^2\,.$

\section{Preliminary integral estimates} \label{S.PIT}

This section is devoted to proving some integral estimates that will be crucial throughout the paper. In particular, we prove an improved Sobolev type inequality for $k$-fold symmetric functions, which is of independent interest. See Proposition \ref{P.ImprovedSobolev} for the precise statement. 

As pointed out in the introduction, it is well known that, under suitable mild assumptions, all positive solutions to \eqref{E.Yamabe} are of the form
\begin{equation} \label{E.bubbleDeltaXi}
U_{\de,\xi}(x) := \frac{1}{\sqrt{\de}}\, U \Big( \frac{x-\xi}{\de} \Big) = \frac{\sqrt{\de}}{(\de^2 + |x-\xi|^2)^\frac12}\,, \quad \textup{for some } \de > 0 \textup{ and } \xi \in \RR^2\,,
\end{equation}
where we recall that
\begin{equation*} 
U(x) := \frac{1}{(1+|x|^2)^{\frac12}}\,.
\end{equation*}
Then, for $t > 0$, $k \in \NN$ with $k \geq 2$ and $j \in \{1, \ldots, k\}$, we define
\begin{equation} \label{E.bubblesAnsatz}
U_{j,k,t}(x) := U_{t\delta_k,\xi_{j,k,t}}(x) = \frac{\sqrt{t\delta_k}}{(t^2 \delta_k^2 + |x-\xi_{j,k,t}|^2)^{\frac12}}\,, 
\end{equation}
with 
\begin{equation} \label{E.deltaXi}
\delta_k := \frac{1}{(k\log k)^2} \quad \textup{and} \quad \xi_{j,k,t} := \sqrt{1-t^2\delta_k^2}\, \Big( \cos \big( \tfrac{2\pi(j-1)}{k} \big), \, \sin\big( \tfrac{2\pi(j-1)}{k} \big) \Big)\,.
\end{equation}

Having this notation at hand, we prove the following estimate in the spirit of \cite[Lemma A.1]{PV19}. The proof is similar to the one of \cite{PV19}, but for the benefit of the reader we provide the details. This result will allow us to simplify many calculations in the rest of the paper.

\begin{lemma} \label{L.PremoselliVetois}
Let 
\begin{equation} \label{E.Omega1}
\begin{aligned}
\Om_1 := \big\{ x \in \RR^2: |x - \xi_{1,k,t}| < |x - \xi_{j,k,t}| \textup{ for all } j \in \{2, \ldots,k\} \big\}\,.
\end{aligned}
\end{equation}
For every $\al, \be \geq 0$ such that $\al + \be \leq 4$ and all $0 < a < b$, there exists a constant $C > 0$ such that
$$
\int_{\Om_1} U^{4-\al-\be}\, U_{1,k,t}^{\al} \bigg( \sum_{j=2}^k U_{j,k,t}\bigg)^{\be} dx \leq C \de_k^{ \frac{\al + \be}{2}}  \Big( k^{\be-1} f_1(k,\al,\be) + (k \log k)^{\be} f_2(\delta_k, k,\al) \Big)\,,
$$
for all $k \geq 2$ and all $t \in [a,b]$. Here,
$$
f_1(k,\al,\be)
:= \left\{
\begin{aligned}
& 1 && \textup{if } \al < 2\,,\\
& (\log k)^{\be+1} && \textup{if } \al = 2\,,\\
& k^{\al -2} (\log k)^{\be} && \textup{if } \al > 2\,,
\end{aligned}
\right.
\quad \textup{ and } \quad 
f_2(\de_k,k,\al) := \left\{
\begin{aligned}
&  k^{\al-2} && \textup{if } \al < 2\,,\\
&  |\log \de_k| && \textup{if } \al = 2\,,\\
& \de_k^{2-\al}  && \textup{if } \al > 2\,. 
\end{aligned}
\right.
$$
\end{lemma}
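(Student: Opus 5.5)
We estimate pointwise, then integrate over two regions of $\Om_1$: a core ball $B_1 := B(\xi_{1,k,t}, \rho_k)$ with $\rho_k := c/k$ ($c$ small so $B_1\subset\Om_1$), and the rest $\Om_1\setminus B_1$. Throughout, $t\in[a,b]$ only changes constants, and $U\lesssim 1$.

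\textbf{Pointwise bounds.} Since $|\xi_{j,k,t}-\xi_{1,k,t}|\sim |j-1|/k$ (for $j-1\le k/2$, and symmetrically otherwise), for $x\in\Om_1$ the triangle inequality gives $|x-\xi_{j,k,t}|\gtrsim |x-\xi_{1,k,t}| + |j-1|/k$. Hence
$$
\sum_{j=2}^k U_{j,k,t}(x)\lesssim \sqrt{\de_k}\sum_{j=2}^{k}\frac{1}{|x-\xi_{1,k,t}|+ j/k}.
$$
In $B_1$ this is $\lesssim \sqrt{\de_k}\,k\log k$. In $\Om_1\setminus B_1$, write $r:=|x-\xi_{1,k,t}|\gtrsim 1/k$; we bound the sum by $\sqrt{\de_k}\, k \log(2+1/r)$, and also $\lesssim \sqrt{\de_k}\,k$ when $r\gtrsim 1$. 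Also $U_{1,k,t}\lesssim \sqrt{\de_k}/(\de_k+r)$.

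\textbf{Core ball.} On $B_1$, with $U\lesssim1$,
$$
\int_{B_1}U_{1,k,t}^\al\Big(\sum_{j\ge2}U_{j,k,t}\Big)^\be \lesssim \de_k^{\frac{\al+\be}{2}}(k\log k)^\be\int_0^{c/k}\frac{r\,dr}{(\de_k+r)^\al}.
$$
The radial integral is $\lesssim k^{\al-2}$ if $\al<2$, $\lesssim|\log\de_k|$ if $\al=2$ (since $\log(1/(k\de_k))\lesssim|\log\de_k|$), and $\lesssim \de_k^{2-\al}$ if $\al>2$. This reproduces exactly the $f_2$ term.

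\textbf{Outer region.} On $\Om_1\setminus B_1$ we have $U_{1,k,t}\lesssim \sqrt{\de_k}/r$, and the decay at infinity comes from $U^{4-\al-\be}$. Since $\Om_1$ lies in a sector of angle $2\pi/k$ around $\xi_{1,k,t}$, the region $\{1/k\lesssim r\lesssim 1\}\cap\Om_1$ has measure element $\lesssim r\,dr$ only while $r\lesssim 1/k$; beyond that the cross-section has width $\sim r/k$. So for $r\gtrsim 1/k$ the measure element is $\lesssim (r/k)\,dr$. This sector geometry is the key point and produces the $k^{-1}$ factor. The integral is then
$$
\lesssim \de_k^{\frac{\al+\be}{2}} k^{\be-1}\int_{1/k}^{1} r^{1-\al}\log^\be(2+1/r)\,dr \;+\;\text{tail},
$$
and the $r$-integral is $\lesssim 1$ if $\al<2$, $\lesssim(\log k)^{\be+1}$ if $\al=2$, and $\lesssim k^{\al-2}(\log k)^\be$ if $\al>2$. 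This is $f_1$.

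For the tail $r\gtrsim1$: within the sector, $|x|\sim r$, so $U^{4-\al-\be}\sim r^{-(4-\al-\be)}$. Combined with $U_{1,k,t}^\al\lesssim \de_k^{\al/2}r^{-\al}$ and $(\sum_j U_{j,k,t})^\be\lesssim(\sqrt{\de_k}k/r)^\be$ (each $|x-\xi_{j,k,t}|\gtrsim r$), the integrand is $\lesssim \de_k^{\frac{\al+\be}{2}}k^\be r^{-4}$. Against $r\,dr/k$ this gives $\de_k^{\frac{\al+\be}{2}}k^{\be-1}$, which is absorbed into the $f_1$ term.

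\textbf{Main obstacle.} The delicate step is the sector-geometry bookkeeping: the cross-sectional width of $\Om_1$ at distance $r$ is $\sim\min(r,\,r/k)$ in the angular sense near the polygon, and the logarithmic bound on $\sum_{j\ge2}U_{j,k,t}$ must be made precise when $x$ lies near the circle versus away from it. We would first verify it by comparing the sum with $\int_{1/k}^{1}\frac{k\,ds}{r+s}$, and handle points far from the unit circle separately using $|x-\xi_j|\gtrsim \operatorname{dist}(x,\partial\mathbb D)+|j-1|/k$.
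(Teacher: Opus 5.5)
Your route is the paper's: a core ball of radius $\sim 1/k$ around $\xi_{1,k,t}$ gives the $f_2$ term, an intermediate region uses the thinness of $\Om_1$ to get the factor $k^{-1}$, and a tail is controlled by $U^{4-\al-\be}$. The step that fails is exactly the one you flag as the main obstacle.

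$\Om_1$ is a wedge of opening $2\pi/k$ with vertex at the \emph{origin}, not at $\xi_{1,k,t}$. Near $\xi_{1,k,t}$ it is therefore a strip of width $\approx 2\pi/k$ in the radial direction. For $10/k\le r\le 1/2$ the set $\{x\in\Om_1:\ |x-\xi_{1,k,t}|=r\}$ consists of two almost straight arcs of total length $\approx 4\pi/k$. So the measure element is $\sim dr/k$, not $(r/k)\,dr$. With the correct measure, the intermediate region contributes
\[
\de_k^{\frac{\al+\be}{2}}\,k^{\be-1}\int_{1/k}^{1} r^{-\al}\log^{\be}(2+1/r)\,dr .
\]
This is $\lesssim \de_k^{\frac{\al+\be}{2}}k^{\be-1}$ if $\al<1$. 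If $\al>1$ it is $\lesssim \de_k^{\frac{\al+\be}{2}}(k\log k)^{\be}k^{\al-2}$, and that quantity is bounded by the $f_2$ term in each of the regimes $1<\al<2$, $\al=2$, $\al>2$. So once the geometry is fixed, your argument proves the inequality for every $\al\neq 1$.

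For $\al=1$ the same computation gives $\de_k^{\frac{1+\be}{2}}k^{\be-1}(\log k)^{\be+1}$. This is a factor $\log k$ above the claimed bound $C\de_k^{\frac{1+\be}{2}}k^{\be-1}\big(1+(\log k)^{\be}\big)$, and no argument can close the gap, because the statement is false there. Take $(\al,\be)=(1,0)$ and
\[
A:=\{x:\ 1+2/k<x_1<3/2,\ |x_2|<x_1\tan(\pi/k)\}\subset\Om_1 .
\]
On $A$ one has $U\gtrsim 1$ and $U_{1,k,t}\gtrsim \sqrt{\de_k}/(x_1-1)$, and every vertical section has length at least $2\pi/k$. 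Hence $\int_{\Om_1}U^3U_{1,k,t}\,dx\gtrsim \sqrt{\de_k}\,k^{-1}\log k$, while the lemma asserts a bound of $2C\sqrt{\de_k}\,k^{-1}$.

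The paper's proof makes the identical slip. In \eqref{E.I21} it integrates $\rho^{1-\al}$ over $\theta\in(-\pi/k,\pi/k)$ with $\rho=|x-\xi_{1,k,t}|$, i.e.\ it treats $\Om_1$ as a wedge centered at $\xi_{1,k,t}$. The same happens in \eqref{E.I22} and \eqref{E.I23}, but there the error is masked by the $f_2$ term. The correct statement has $f_1(k,1,\be)=(\log k)^{\be+1}$. The paper uses the lemma with $\al=1$ only for $(\al,\be)=(1,3)$, where the extra logarithm is harmless: $k\int_{\Om_1}U_{1,k,t}\big(\sum_{j\ge2}U_{j,k,t}\big)^3dx\lesssim k^{-1}$ still.
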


\begin{proof}
We divide the integral into three terms and estimate them separately. For that, we define
\begin{align*}
 \Omega_{1,1}:=&B_{\delta_k}(\xi_{1,k,t}),\qquad  \Omega_{1,2}:=\Omega_1\cap (B_{2\sqrt{1-t^2\delta_k^2}}(0)\backslash B_{\delta_k}(\xi_{1,k,t})),\qquad \Omega_{1,3}:=\Omega_1\backslash B_{2\sqrt{1-t^2\delta_k^2}}(0),
\end{align*}
so that
$$
\Omega_1=\Omega_{1,1}\cup \Omega_{1,2}\cup \Omega_{1,3}.
$$
We refer to Figure \ref{Omega1} for an illustration of the decomposition of $\Omega_1$.
\begin{figure}[h]
 \centering
\includegraphics[width=0.6\linewidth]{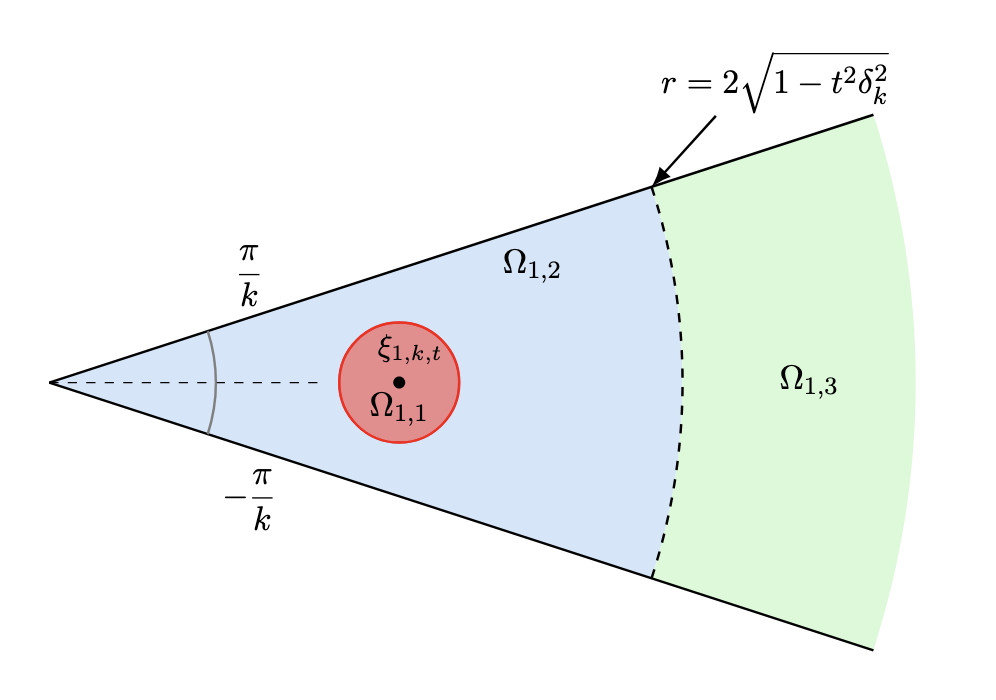}
\caption{Schematic picture of the decomposition of $\Om_1$.}
\label{Omega1}
\end{figure}

Then, the integral can be divided as
\begin{align*}
& \int_{\Om_1} U^{4-\al-\be}\, U_{1,k,t}^{\al} \bigg( \sum_{j=2}^k U_{j,k,t}\bigg)^{\be} dx=  \int_{\Om_{1,1}} U^{4-\al-\be}\, U_{1,k,t}^{\al} \bigg( \sum_{j=2}^k U_{j,k,t}\bigg)^{\be} dx\\
& \quad +\int_{\Om_{1,2}} U^{4-\al-\be}\, U_{1,k,t}^{\al} \bigg( \sum_{j=2}^k U_{j,k,t}\bigg)^{\be} dx  +\int_{\Om_{1,3}} U^{4-\al-\be}\, U_{1,k,t}^{\al} \bigg( \sum_{j=2}^k U_{j,k,t}\bigg)^{\be} dx =: I_1+I_2+I_3.
\end{align*}

We start dealing with $I_1$. Note that
\begin{align*}
    I_1 &  =  t^\frac{\alpha+\beta}{2} \delta_k^\frac{\alpha+\beta}{2} \int_{\Omega_{1,1}}\frac{1}{(1+|x|^2)^\frac{4-\alpha-\beta}{2}}\frac{1}{(t^2\delta_k^2+|x-\xi_{1,k,t}|^2)^\frac{\alpha}{2}}\bigg(\sum_{j=2}^k\frac{1}{(t^2\delta_k^2+|x-\xi_{j,k,t}|^2)^\frac12}\bigg)^{\beta} dx\\
    & \lesssim \delta_k^\frac{\beta-\alpha}{2} \int_{\Omega_{1,1}}\bigg(\sum_{j=2}^k\frac{1}{|x-\xi_{j,k,t}|}\bigg)^\beta dx.
\end{align*}
Moreover, by \cite[(A.6)]{PV19} with $n=3$, we know that
\begin{equation}\label{id-intklogk}
\sum_{j=2}^k\frac{1}{|x-\xi_{j,k,t}|} \lesssim k\log k \quad \textup{ in } \Om_1.
\end{equation}
Hence, 
\begin{equation} \label{E.I1}
    I_1\lesssim \delta_k^\frac{\beta-\alpha}{2} (k\log k)^\beta\int_{\Omega_{1,1}} dx=\delta_k^\frac{\beta-\alpha}{2} (k\log k)^\beta|\Omega_{1,1}|\lesssim\delta_k^{2+\frac{\beta-\alpha}{2}} (k\log k)^\beta.
\end{equation}

Next, we analyze $I_2$. First, we immediately see that
\begin{equation} \label{E.I2}
\begin{aligned}
    I_2 & = t^\frac{\alpha+\beta}{2} \delta_k^\frac{\alpha+\beta}{2} \int_{\Omega_{1,2}}\frac{1}{(1+|x|^2)^\frac{4-\alpha-\beta}{2}}\frac{1}{(t^2\delta_k^2+|x-\xi_{1,k,t}|^2)^\frac{\alpha}{2}}\bigg(\sum_{j=2}^k\frac{1}{(t^2\delta_k^2+|x-\xi_{j,k,t}|^2)^\frac12}\bigg)^\beta dx\\
    & \lesssim \delta_k^\frac{\alpha+\beta}{2} \int_{\Om_{1,2}}\frac{1}{|x-\xi_{1,k,t}|^\alpha}\bigg(\sum_{j=2}^k\frac{1}{|x-\xi_{j,k,t}|}\bigg)^\beta dx.
\end{aligned}
\end{equation}
Now, we define
$$
\Gamma_2:=\left[-\frac{1}{k}+\sqrt{1-t^2\delta_k^2},\ \frac{1}{k}+\sqrt{1-t^2\delta_k^2} \right]\times\RR,
$$
and separate $I_2$ into two integrals. First, using again \eqref{id-intklogk}, we see that
\begin{align*}
    I_2 & \lesssim \delta_k^\frac{\alpha+\beta}{2} \int_{\Om_{1,2}\cap \Gamma_2}\frac{1}{|x-\xi_{1,k,t}|^\alpha}\bigg(\sum_{j=2}^k\frac{1}{|x-\xi_{j,k,t}|}\bigg)^\beta dx+\delta_k^\frac{\alpha+\beta}{2} \int_{\Om_{1,2}\backslash\Gamma_2}\frac{1}{|x-\xi_{1,k,t}|^\alpha}\bigg(\sum_{j=2}^k\frac{1}{|x-\xi_{j,k,t}|}\bigg)^\beta dx \\
    & \lesssim \delta_k^\frac{\alpha+\beta}{2} (k\log k)^\beta \int_{\Om_{1,2}\cap \Gamma_2}\frac{1}{|x-\xi_{1,k,t}|^\alpha}dx+\delta_k^\frac{\alpha+\beta}{2} \int_{\Om_{1,2}\backslash\Gamma_2}\frac{1}{|x-\xi_{1,k,t}|^\alpha}\bigg(\sum_{j=2}^k\frac{1}{|x-\xi_{j,k,t}|}\bigg)^\beta dx.
\end{align*}
Also, observe that
$$
\Om_{1,2}\cap \Gamma_2\subset \Big(\left[-\frac{1}{k}+\sqrt{1-t^2\delta_k^2},\frac{1}{k}+\sqrt{1-t^2\delta_k^2} \right]\times \left[-\frac{2\sqrt{1-t^2\delta_k^2}\pi}{k},\frac{2\sqrt{1-t^2\delta_k^2}\pi}{k}\right]\Big)\backslash B_{\delta_k}(\xi_{1,k,t}),
$$
and so that
\begin{align*}
    I_2 \lesssim \delta_k^\frac{\al + \be}{2} (k\log k)^\beta  \int_{B_{\frac{20}{k}}(0)\backslash B_{\frac{\delta_k}{10}}(0)} \frac{1}{|x|^\alpha}dx+\delta_k^\frac{\alpha+\beta}{2} \int_{\Om_{1,2}\backslash  \Gamma_2}\frac{1}{|x-\xi_{1,k,t}|^\alpha}\bigg(\sum_{j=2}^k\frac{1}{|x-\xi_{j,k,t}|}\bigg)^\beta dx,
\end{align*}
where
\begin{equation*}
    \int_{B_{\frac{20}{k}}(0)\backslash B_{\frac{\delta_k}{10}}(0)} \frac{1}{|x|^\alpha}dx\lesssim\left\{\begin{array}{ll}
        k^{\alpha-2}, &   \alpha<2,\\
        |\log \delta_k|, &  \alpha=2,\\
        \delta_k^{2-\alpha}, &\alpha>2.
    \end{array}\right.
\end{equation*}

For the second integral with $\al < 2$, we will use  \cite[(A.7)]{PV19} with $n=3$, namely
\begin{equation} \label{E.sumklogabs}
\sum_{j=2}^k\frac{1}{|x-\xi_{j,k,t}|} \lesssim k\log\left(1+\frac{1}{|x-\xi_{1,k,t}|}\right) \quad \textup{in } \Om_1.
\end{equation}
Combining everything, we find that, for $\alpha<2$,
\begin{equation} \label{E.I21}
\begin{aligned}
    I_2 & \lesssim \delta_k^\frac{\al + \be}{2} (k\log k)^\beta  k^{\alpha-2}+\delta_k^\frac{\alpha+\beta}{2} k^\beta  \int_{\Om_{1,2}\backslash \Gamma_2}\frac{1}{|x-\xi_{1,k,t}|^\alpha}\left(\log\left(1+\frac{1}{|x-\xi_{1,k,t}|}\right)\right)^\beta dx\\
    &\lesssim \delta_k^\frac{\al + \be}{2} (k\log k)^\beta  k^{\alpha-2}+\delta_k^\frac{\alpha+\beta}{2} k^\beta  \int_{-\frac{\pi}{k}}^{\frac{\pi}{k}}\int_{0}^{2} \rho^{1-\al}\left(\log\left(1+\frac{1}{\rho}\right)\right)^\beta d\rho d\theta\\
    &\lesssim \delta_k^\frac{\al + \be}{2} (k\log k)^\beta  k^{\alpha-2}+\delta_k^\frac{\alpha+\beta}{2} k^{\beta-1}.
\end{aligned}
\end{equation}
Also, using \eqref{id-intklogk}, we get that, for $\al = 2$,
\begin{equation} \label{E.I22}
\begin{aligned}
    I_2 & \lesssim \delta_k^\frac{\al + \be}{2} (k\log k)^\beta  |\log \delta_k|+\delta_k^\frac{\alpha+\beta}{2} (k\log k)^\beta  \int_{\Om_{1,2}\backslash\Gamma_2}\frac{1}{|x-\xi_{1,k,t}|^\alpha}dx\\
    & \lesssim \delta_k^\frac{\al + \be}{2} (k\log k)^\beta  |\log \delta_k|+\delta_k^\frac{\alpha+\beta}{2} (k\log k)^\beta  \int_{-\frac{\pi}{k}}^{\frac{\pi}{k}}\int_{\frac{1}{k}}^{2}\frac{1}{\rho}d\rho d\theta\\   
    & \lesssim \delta_k^\frac{\al + \be}{2} (k\log k)^\beta  |\log \delta_k|+\delta_k^\frac{\alpha+\beta}{2} k^{\be-1} (\log k)^{\beta+1},
\end{aligned}
\end{equation}
and, for $\alpha>2$,
\begin{equation} \label{E.I23}
\begin{aligned}
    I_2 & \lesssim \delta_k^\frac{\al + \be}{2} (k\log k)^\beta  \delta_k^{2-\alpha}+\delta_k^\frac{\alpha+\beta}{2} (k\log k)^\beta  \int_{\Om_{1,2}\backslash \Gamma_2}\frac{1}{|x-\xi_{1,k,t}|^\alpha}dx\\
    & \lesssim \delta_k^\frac{\al + \be}{2} (k\log k)^\beta  \delta_k^{2-\alpha}+\delta_k^\frac{\alpha+\beta}{2} (k\log k)^\beta  \int_{-\frac{\pi}{k}}^{\frac{\pi}{k}}\int_{\frac{1}{k}}^{2}\rho^{1-\alpha}d\rho d\theta\\
    & \lesssim  \delta_k^\frac{\al + \be}{2} (k\log k)^\beta  \delta_k^{2-\alpha}+\delta_k^\frac{\alpha+\beta}{2} k^{\al + \be-3} (\log k)^\beta.
\end{aligned}
\end{equation}

Finally, we estimate $I_3$. Using that
$$
|x-\xi_{i,k,t}| \geq \frac{|x|}{2}\quad \textup{in } \Om_{1,3}\,, 
$$
for all $i \in \{1, \ldots,k\}$, we get that
\begin{equation} \label{E.I3}
    \begin{aligned}
        I_3 \lesssim \de_k^{\frac{\al+\be}{2}} k^\be \int_{\Om_{1,3}} \frac{1}{|x|^4} \,dx \lesssim \de_k^{\frac{\al+\be}{2}} k^\be \int_{-\frac{\pi}{k}}^{\frac{\pi}{k}} \int_1^{\infty} \rho^{-3} d\rho \lesssim \de_k^{\frac{\al+\be}{2}} k^{\be-1}\,.
    \end{aligned}
\end{equation}
The result follows combining \eqref{E.I1}, \eqref{E.I21}, \eqref{E.I22}, \eqref{E.I23} and \eqref{E.I3}.
\end{proof}

The rest of this section is devoted to the improved Sobolev type inequality. We will prove it in $\dot{H}^1(\RR^3)$, but the proof can be adapted to higher dimensions. For $t > 0$, $k \in \NN$ with $k \geq 2$ and $j \in \{1, \ldots,k\}$, we define
$$
\Xi_{j,k,t} := (\xi_{j,k,t}, 0) = \sqrt{1-t^2\delta_k^2}\, \Big( \cos \big( \tfrac{2\pi(j-1)}{k} \big), \, \sin\big( \tfrac{2\pi(j-1)}{k} \big),\, 0 \Big)  \in \RR^3\,, 
$$
and let $\dot{H}^1_k(\RR^3)$ be the subset of all functions $v \in \dot{H}^1(\RR^3)$ such that $v$ is $k$-fold symmetric in $x_1, x_2$, that is,
\begin{equation} \label{E.k-foldsymmetryR3}
v(r \cos(\theta), r \sin(\theta),x_3) = v\big( r \cos(\theta + \tfrac{2\pi}{k}), r \sin(\theta + \tfrac{2\pi}{k} ),x_3\big)\,, \quad \textup{for all } r > 0, \ \theta \in [0,2\pi) \textup{ and } x_3 \in \RR\,.
\end{equation}

The improved Sobolev inequality reads as follows:

\begin{proposition} \label{P.ImprovedSobolev}
    For every $0 < a < b < \infty$, there exist $k_{\star} \in \NN$ and $\cC_{\star} > 0$ such that, for all $k \geq k_{\star}$, all $t \in [a,b]$, and all $v \in \dot{H}^1_k(\RR^3)$, it follows that
    \begin{equation}
        \frac{1}{t\de_k} \int_{B_{2\sqrt{t\de_k}}(\Xi_{1,k,t})} |v|^2 dx \leq \frac{\cC_{\star}}{k} \int_{\RR^3} |\nabla v|^2 dx = \cC_{\star} \int_{\Om_1 \times \RR} |\nabla v|^2 dx\,.
    \end{equation}
\end{proposition}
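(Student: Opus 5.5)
By $k$-fold symmetry, $\RR^3$ is, up to a null set, the union of the $k$ rotated copies of $\Om_1\times\RR$, and $|\nabla v|^2$ is invariant under these rotations. This gives the equality $\frac1k\int_{\RR^3}|\nabla v|^2=\int_{\Om_1\times\RR}|\nabla v|^2$. For the inequality, set $r:=2\sqrt{t\de_k}\sim (k\log k)^{-1}$. Since $\frac{1}{t\de_k}\sim r^{-2}$, it suffices to prove
\begin{equation*}
\int_{B_r(\Xi_{1,k,t})}|v|^2\,dx\lesssim \frac{r^2}{k}\int_{\RR^3}|\nabla v|^2\,dx .
\end{equation*}
Let $Q$ be a cube of side $\sim 1/k$ centred at $\Xi_{1,k,t}$ and contained in $\Om_1\times\RR$, and let $v_Q$ be the average of $v$ over $Q$. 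I split
\begin{equation*}
\int_{B_r}|v|^2\le 2\int_{B_r}|v-v_Q|^2+2|B_r|\,v_Q^2 .
\end{equation*}

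\textbf{Step 1: the oscillation term.} By H\"older and the scale-invariant Sobolev--Poincar\'e inequality on $Q$,
\begin{equation*}
\int_{B_r}|v-v_Q|^2\le |B_r|^{\frac23}\|v-v_Q\|_{L^6(Q)}^2\lesssim r^2\int_Q|\nabla v|^2\le r^2\int_{\Om_1\times\RR}|\nabla v|^2 ,
\end{equation*}
which is exactly of the required size.

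\textbf{Step 2: the mean term.} We need $v_Q^2\lesssim \frac{1}{rk}\|\nabla v\|_{L^2}^2\sim \log k\,\|\nabla v\|_{L^2}^2$. A direct $L^6$ bound on $Q$ loses a power of $k$, so I would use an approximately two-dimensional structure near the circle instead. For $s>0$ let
\begin{equation*}
T_s:=\{x\in\RR^3:\ \operatorname{dist}(x,\{|x'|=\sqrt{1-t^2\de_k^2},\ x_3=0\})<s\}
\end{equation*}
be the solid torus of tube radius $s$. By $k$-fold symmetry, $T_{c/k}$ is, up to small boundary pieces, a union of $k$ rotated copies of a region comparable to $Q$. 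So $v_Q$ is comparable to the mean $m_{c/k}$ of $v$ over $T_{c/k}$, with an error controlled by Poincar\'e at scale $1/k$ and hence harmless. Next, compare consecutive dyadic means $m_s$ and $m_{2s}$ for $1/k\le s\le 1$. On the shell $T_{2s}\setminus T_s$, whose volume is $\sim s^2$, the Poincar\'e constant is $\lesssim s^2$. Here the symmetry is used: the angular direction has period $2\pi/k\lesssim s$, so it costs nothing, and the shell behaves like a planar annulus of radius $s$ times a circle. This gives
\begin{equation*}
|m_s-m_{2s}|^2\lesssim \frac{s^2}{s^2}\int_{T_{2s}\setminus T_s}|\nabla v|^2 .
\end{equation*}
Summing over the $\sim\log k$ dyadic scales and applying Cauchy--Schwarz yields $|m_{1/k}-m_1|^2\lesssim \log k\,\|\nabla v\|_{L^2}^2$. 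Finally, $m_1^2\lesssim |T_1|^{-1/3}\|v\|_{L^6}^2\lesssim\|\nabla v\|_{L^2}^2$ by the Sobolev inequality in $\RR^3$. Therefore $v_Q^2\lesssim\log k\,\|\nabla v\|^2_{L^2(\RR^3)}$, and
\begin{equation*}
|B_r|\,v_Q^2\lesssim r^3\log k\,\|\nabla v\|_{L^2}^2\sim \frac{r^2}{k}\,\|\nabla v\|_{L^2}^2 .
\end{equation*}

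\textbf{Main obstacle.} The hard part is Step 2. One must justify the uniform (in $k$ and $s$) Poincar\'e inequality on the toroidal shells, whose geometry becomes cylindrical but only at scales $s\gg 1/k$. One must also check that the logarithmic loss $\log k$ is exactly compensated by the choice $\de_k=(k\log k)^{-2}$. This is precisely where the scaling of the statement is sharp, and it is why the $\log k$ factor appears in $\de_k$.
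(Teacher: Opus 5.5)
Your argument is correct in substance, and it takes a genuinely different route from the paper.

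\textbf{The paper's route.} The paper argues through level sets and potential theory. It proves quasi-additivity of Newtonian capacity for unions of compact sets $K_j\subset B_{2\sqrt{t\delta_k}}(\Xi_{j,k,t})$, using the energy characterization $\mathrm{cap}(E)=\sup_\mu \mu(E)^2/I(\mu)$. The cross interactions are controlled by $\sqrt{\delta_k}\sum_{i\neq j}|\Xi_{i,k,t}-\Xi_{j,k,t}|^{-1}\lesssim\sqrt{\delta_k}\,k\log k\lesssim 1$. Combined with the isocapacitary inequality $|E|^{1/3}\lesssim\mathrm{cap}(E)$, this gives $|K|\lesssim\delta_k\,\mathrm{cap}(K)$ for $k$-fold symmetric $K$. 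The estimate then follows from the layer-cake formula and Maz'ya's capacitary strong-type inequality.

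\textbf{Your route.} You split mean and oscillation on a cube of side $\sim 1/k$. You handle the oscillation with the scale-invariant Sobolev--Poincar\'e inequality. You control the mean by a dyadic chain of averages over toroidal shells, from scale $1/k$ up to scale $O(1)$. The symmetry removes azimuthal oscillations, so each shell behaves like a planar annulus. The loss $\log k$ is then the number of dyadic scales, i.e.\ the logarithmic capacity of a thin torus. Both proofs close thanks to the same identity $\sqrt{\delta_k}\,k\log k=1$.

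\textbf{What each route buys.} Yours is elementary and self-contained: no capacity theory and no Maz'ya. It explains the two-dimensional origin of the logarithm. It also yields the stronger bound $v_Q^2\lesssim\log k\,\|\nabla v\|_{L^2(\mathbb{R}^3)}^2$ for the average over the whole cube. The paper's route is shorter once its black boxes are granted and treats all level sets at once. It also uses only the discrete interaction sum, not the continuous torus geometry.

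\textbf{One step must be repaired as written.} Suppose $m_s$ is the mean over the solid torus $T_s$. Then the inequality $|m_s-m_{2s}|^2\lesssim\int_{T_{2s}\setminus T_s}|\nabla v|^2$ is false: take $v$ constant on the shell but nonconstant in $T_s$. If you instead apply Poincar\'e on all of $T_{2s}$, the domains are nested rather than disjoint. Summing over scales then costs an extra factor $\log k$, which destroys the sharp scale.

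The fix is as follows.
\begin{itemize}
\item Define $m_s$ as the average over the shell $A_s:=T_{2s}\setminus T_s$, and compare $m_s$ with $m_{2s}$ by Poincar\'e on $A_s\cup A_{2s}$. These domains have bounded overlap, so Cauchy--Schwarz over the $\sim\log k$ scales gives exactly $\log k\,\|\nabla v\|_{L^2}^2$.
\item The innermost links cost only $O(\|\nabla v\|_{L^2}^2)$, by Poincar\'e at scale $1/k$. The first link compares $v_Q$ with the mean over $T_{c/k}$; by symmetry the latter equals the mean over one sector piece of $T_{c/k}$, and Poincar\'e is applied on a ball of radius $\sim 1/k$ meeting $O(1)$ sectors. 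The second link compares that mean with the first shell mean, using Poincar\'e on $T_{2c/k}$.
\item Stop the chain at $s\sim 1/8$, so that the shells remain thin tori.
\end{itemize}

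\textbf{The uniform Poincar\'e constant.} You flag as the main obstacle the bound $\lesssim s^2+k^{-2}$ for the Poincar\'e constant on these shells. It follows by separation of variables. Use coordinates $(\rho,\alpha,\varphi)$: tube radius, meridional angle, azimuthal angle. The volume element is $\rho R\,d\rho\,d\alpha\,d\varphi$, with $R=\sqrt{1-t^2\delta_k^2}+\rho\cos\alpha$ comparable to $1$.
\begin{itemize}
\item The part of $v$ orthogonal to its $\varphi$-average contains only azimuthal frequencies that are nonzero multiples of $k$. It therefore obeys Poincar\'e with constant $\lesssim k^{-2}$.
\item The $\varphi$-average is a function on a planar annulus of radii $s$ and $2s$ (or $s$ and $4s$), with Poincar\'e constant $\lesssim s^2$.
\end{itemize}
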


The proof of this result is rather long, so we split it into several lemmas. We start with a key quasi-additivity result for the Newtonian capacity of suitable sets in this $k$-fold setting. Here and throughout this section, we denote by ${\rm cap}(E)$ the Newtonian capacity of a compact set $E \subset \RR^3$.  

\begin{lemma} \label{L.QA}
      For every $0 < a < b < \infty$, there exist $k_{\star} \in \NN$ and $c_1> 0$ such that, for all $k \geq k_{\star}$, all $t \in [a,b]$, and all 
      $$
      K = \bigcup_{j=1}^k K_j\,,
      $$
      with $K_j \subset B_{2\sqrt{t\de_k}}(\Xi_{j,k,t})$, $j \in \{1, \ldots,k\}$, compact sets, it follows that
      $$
      {\rm cap}(K) \geq c_1 \sum_{j=1}^k {\rm cap}(K_j)\,.
      $$
\end{lemma}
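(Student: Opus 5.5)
We will argue via the dual (equilibrium measure) characterization of Newtonian capacity in $\RR^3$: for a compact $E$,
$$
{\rm cap}(E) = \sup\Big\{ \frac{\mu(E)^2}{\mathcal{I}(\mu)} : \mu \textup{ positive Borel measure supported on } E \Big\}, \qquad \mathcal{I}(\mu) := \iint \frac{d\mu(x)\,d\mu(y)}{4\pi|x-y|},
$$
with the supremum attained by the equilibrium measure $\mu_E$, for which $\mu_E(E) = \mathcal{I}(\mu_E) = {\rm cap}(E)$. Let $\mu_j$ be the equilibrium measure of $K_j$ (we may assume ${\rm cap}(K_j)>0$, dropping the others). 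We will test with $\mu := \sum_j \mu_j$, which is supported on $K$. The $K_j$ may overlap only if the balls overlap, which cannot happen for $k$ large, since $2\sqrt{t\de_k} \ll 1/k$ (indeed $\sqrt{\de_k} = (k\log k)^{-1}$). Then $\mu(K) = \sum_j {\rm cap}(K_j) =: S$. The energy splits as
$$
\mathcal{I}(\mu) = \sum_j {\rm cap}(K_j) + \sum_{i\neq j} \iint \frac{d\mu_i\,d\mu_j}{4\pi|x-y|}.
$$

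The key step is to bound the cross terms by $C S$. For $x\in K_i$ and $y\in K_j$ with $i\ne j$, we have $|x-y| \geq |\Xi_i-\Xi_j| - 4\sqrt{b\de_k} \gtrsim |\Xi_i - \Xi_j| \sim |i-j|_k/k$, where $|\cdot|_k$ denotes cyclic distance. We also use ${\rm cap}(K_j) \leq {\rm cap}(B_{2\sqrt{t\de_k}}) = 8\pi\sqrt{t\de_k} \lesssim (k\log k)^{-1}$. Hence
$$
\sum_{i\ne j} \iint \frac{d\mu_i\,d\mu_j}{|x-y|} \lesssim \sum_i {\rm cap}(K_i) \sum_{j\ne i} \frac{k}{|i-j|_k}\,{\rm cap}(K_j) \lesssim \sum_i {\rm cap}(K_i) \cdot \frac{1}{k\log k}\, k \sum_{m=1}^{k} \frac{1}{m} \lesssim S.
$$
This is exactly the point where the choice $\de_k = (k\log k)^{-2}$ is used: the harmonic sum produces a $\log k$ that is cancelled by the $\log k$ in $\sqrt{\de_k}$. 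This is the same mechanism as \eqref{id-intklogk}.

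Therefore $\mathcal{I}(\mu) \leq (1+C) S$, and
$$
{\rm cap}(K) \geq \frac{S^2}{(1+C)S} = c_1 \sum_j {\rm cap}(K_j),
$$
with $c_1 = (1+C)^{-1}$ uniform in $t\in[a,b]$ and $k\ge k_\star$. The step I expect to require the most care is justifying the variational characterization with the normalization conventions above and confirming the lower bound on $|x-y|$ uniformly, including neighbours $|i-j|_k=1$. The latter needs $4\sqrt{b\de_k} \leq \tfrac12 |\Xi_1-\Xi_2|$, which fixes $k_\star$.
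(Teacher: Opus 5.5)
Your proposal is correct and follows essentially the paper's argument. Both use the dual characterization ${\rm cap}(E)=\sup_\mu \mu(E)^2/I(\mu)$ tested on $\mu=\sum_j\mu_j$, bound the cross terms by ${\rm cap}(K_i)\,{\rm cap}(K_j)/|\Xi_{i,k,t}-\Xi_{j,k,t}|$, and cancel the $k\log k$ from the harmonic sum against ${\rm cap}(B_{2\sqrt{t\de_k}})\lesssim\sqrt{\de_k}=(k\log k)^{-1}$; the only cosmetic difference is that you take exact equilibrium measures with $I(\mu_j)={\rm cap}(K_j)$, whereas the paper takes near-optimal ones with $\mu_j(K_j)={\rm cap}(K_j)$ and $I(\mu_j)\le 2\,{\rm cap}(K_j)$.
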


\begin{proof}
    We will use the dual characterization of the Newtonian capacity: for $E \subset \RR^3$ compact
    $$
    {\rm cap}(E) = \sup_{\mu \in \cM_E^{+}} \frac{\mu(E)^2}{I(\mu)}\,,
    $$
    where $\cM_E^{+} := \{\mu: \mu$ is a positive finite Radon measure and $\supp(\mu) \subset E\}$ and
    $$
    I(\mu) := \frac{1}{4\pi}\iint_{\RR^3 \times \RR^3} \frac{1}{|x-y|} d\mu(x) d\mu(y) = \frac{1}{4\pi} \iint_{E\times E}  \frac{1}{|x-y|} d\mu(x) d\mu(y)\,.
    $$
    We refer for instance to \cite[(6.66)]{TaylorII} for this classical characterization. Actually, let us explicitly point out that it is not exactly the same characterization as in \cite[(6.66)]{TaylorII}. However, doing a simple scaling argument, one can check they are equivalent.

    Now, for each $K_j$, we choose a measure $\mu_j \in \cM_{K_j}^{+}$ such that
    $$
    \mu_j(K_j) = {\rm cap}(K_j) \quad \textup{and} \quad I(\mu_j) \leq 2 {\rm cap}(K_j)\,,
    $$
    and set
    $$
    \mu := \sum_{j=1}^k \mu_j \in \cM_{K}^{+}\,,
    $$
    so that
    \begin{equation} \label{E.QA1}
        \mu(K) = \sum_{j=1}^k \mu_j(K_j) = \sum_{j=1}^k {\rm cap}(K_j)\,.
    \end{equation}
    Note that here we are using that $K_i \cap K_j = \emptyset$ for all $i \neq j$. For $k$ sufficiently large, this immediately follows from the definition of $\Xi_{i,k,t}$, $\Xi_{j,k,t}$ and the choice of $\de_k$. 
    Moreover, observe that
    \begin{align*}
       4\pi I(\mu) & = \int_K \int_K \frac{1}{|x-y|} d \mu(x) d \mu(y) = \Sjk \sum_{i=1}^k \int_{K_j} \int_{K_i} \frac{1}{|x-y|} d \mu_j(x) d \mu_i(y) \\
       & = 4\pi\Sjk I(\mu_j) + \Sjk \sum_{i \neq j}  \int_{K_j} \int_{K_i} \frac{1}{|x-y|} d \mu_j(x) d \mu_i(y) \\
       & \leq 8\pi \Sjk {\rm cap}(K_j) + \Sjk \sum_{i \neq j}  \int_{K_j} \int_{K_i} \frac{1}{|x-y|} d \mu_j(x) d \mu_i(y)\,.
    \end{align*}
    Also, using again the definition of $\Xi_{i,k,t}$, $\Xi_{j,k,t
    }$ and the choice of $\de_k$, we infer that, for $k$ sufficiently large, 
    $$
    \int_{K_j} \int_{K_i} \frac{1}{|x-y|} d \mu_j(x) d \mu_i(y) \leq \frac{100}{|\Xi_{i,k,t}-\Xi_{j,k,t}|} \int_{K_j} \int_{K_i} d\mu_j(x) d\mu_i(y) = \frac{100 \,{\rm cap}(K_j) {\rm cap}(K_i)}{|\Xi_{i,k,t}-\Xi_{j,k,t}|}\,.
    $$
    Hence, using that 
    $$
    \sum_{i \neq j} \frac{1}{|\Xi_{i,k,t}-\Xi_{j,k,t}|} \lesssim k \log k \quad \textup{and} \quad 
    {\rm cap} ( B_{2\sqrt{t\de_k}} (\Xi_{i,k,t})) \lesssim \sqrt{\de_k}\,,
    $$
    we get that
    \begin{equation} \label{E.QA2}
    \begin{aligned}
        4\pi I(\mu) & \lesssim \sum_{j=1}^k {\rm cap} (K_j) + \Sjk {\rm cap}(K_j) \sum_{i \neq j} \frac{{\rm cap}(K_i)}{|\Xi_{i,k,t}-\Xi_{j,k,t}|} \\
        & \leq \sum_{j=1}^k {\rm cap} (K_j) + \Sjk {\rm cap}(K_j) \sum_{i \neq j} \frac{{\rm cap}(B_{2\sqrt{t\de_k}} (\Xi_{i,k,t}))}{|\Xi_{i,k,t}-\Xi_{j,k,t}|} \\
        & \lesssim \sum_{j=1}^k {\rm cap} (K_j) + \sqrt{\de_k} \Sjk {\rm cap}(K_j) \sum_{i \neq j} \frac{1}{|\Xi_{i,k,t}-\Xi_{j,k,t}|} \\
        & \lesssim   \sum_{j=1}^k {\rm cap} (K_j) + \sqrt{\de_k}\, k \log k \Sjk {\rm cap}(K_j) \lesssim \Sjk {\rm cap}(K_j)\,.
    \end{aligned}
    \end{equation}

    Finally, combining \eqref{E.QA1} and \eqref{E.QA2}, we conclude that
    $$ 
    {\rm cap}(K) \geq \frac{\mu(K)^2}{I(\mu)} \gtrsim \frac{ \left( \Sjk {\rm cap}(K_j) \right)^2}{\Sjk {\rm cap}(K_j)} = \Sjk {\rm cap}(K_j)\,,
    $$
    as desired. The proof is concluded.
\end{proof}

Having the previous lemma at hand, one can easily prove the following:

\begin{lemma} \label{L.QAcorollary}
    For every $0 < a < b < \infty$, let $k_\star \in \NN$ be as in Lemma \ref{L.QA}. There exists $c_2 > 0$  such that, for all $k \geq k_\star$, all $t \in [a,b]$ and all $K_1 \subset B_{2\sqrt{t\de_k}}(\Xi_{1,k,t})$, if we set $K_j := R_{\frac{2\pi(j-1)}{k}} K_1$ for $j \in \{1, \ldots,k\}$ and
    $$
    K := \bigcup_{j=1}^k K_j\,,
    $$
    then
    $$
    |K| \leq c_2 \de_k {\rm cap}(K)\,.
    $$
    Here, $R_\te$ denotes the rotation matrix of angle $\te$ around the $x_3$-axis. 
\end{lemma}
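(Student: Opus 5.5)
We combine the classical isocapacitary inequality in $\RR^3$ with the quasi-additivity of Lemma \ref{L.QA}. Recall that for any compact $E \subset \RR^3$,
$$
{\rm cap}(E) \geq c_0 \, |E|^{\frac13},
$$
with $c_0>0$ universal. This follows from the Pólya--Szegő inequality: among sets of given volume, balls minimize capacity, and ${\rm cap}(B_r) \sim r \sim |B_r|^{1/3}$.

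\textbf{Step 1: a single piece.} Apply the isocapacitary inequality to $K_1$. Since $K_1 \subset B_{2\sqrt{t\de_k}}(\Xi_{1,k,t})$, we have $|K_1| \lesssim (t\de_k)^{3/2} \lesssim \de_k^{3/2}$ uniformly in $t\in[a,b]$. Therefore
$$
|K_1| = |K_1|^{\frac13}\,|K_1|^{\frac23} \lesssim |K_1|^{\frac13}\, \de_k \lesssim \de_k\, {\rm cap}(K_1).
$$
This is the only place where the size constraint on $K_1$ enters. It is precisely what converts the three-dimensional volume into $\de_k$ times a capacity, which scales like length.

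\textbf{Step 2: summing over the rotated copies.} Each $K_j = R_{\frac{2\pi(j-1)}{k}}K_1$ is an isometric image of $K_1$. Hence $|K_j| = |K_1|$ and ${\rm cap}(K_j) = {\rm cap}(K_1)$, since Newtonian capacity is invariant under rigid motions. Moreover $K_j \subset B_{2\sqrt{t\de_k}}(\Xi_{j,k,t})$, because the rotation maps $\Xi_{1,k,t}$ to $\Xi_{j,k,t}$. Thus the hypotheses of Lemma \ref{L.QA} are satisfied for $k\ge k_\star$.

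\textbf{Step 3: conclusion.} By subadditivity of Lebesgue measure (the pieces are in fact disjoint for large $k$), Step 1, and Lemma \ref{L.QA},
$$
|K| \leq \sum_{j=1}^k |K_j| \lesssim \de_k \sum_{j=1}^k {\rm cap}(K_j) \leq \frac{\de_k}{c_1}\, {\rm cap}(K).
$$
This gives the statement with $c_2$ depending only on $b$ and $c_1$.

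There is no real obstacle here. The substantive difficulty, namely that interactions between the pieces do not destroy additivity, is already handled in Lemma \ref{L.QA}. The only point requiring a reference is the isocapacitary inequality, and one should check that the capacity normalization used in Lemma \ref{L.QA} changes only the constant.
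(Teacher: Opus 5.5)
Your proof is correct and uses the same ingredients as the paper: the bound $|E|^{1/3}\lesssim {\rm cap}(E)$, the volume bound $|K_1|\lesssim \de_k^{3/2}$, rotation invariance of measure and capacity, and Lemma \ref{L.QA}. The differences are cosmetic. The paper obtains the isocapacitary bound from the variational characterization plus Sobolev's inequality rather than from P\'olya--Szeg\H{o}. It also arranges the estimate globally as $|K|^{1/3}\cdot|K|^{2/3}$ using $|K|=k|K_1|$, instead of bounding each piece and summing.
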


\begin{proof}
    First note that $|K| = k |K_1|$ and that ${\rm cap}(K_j) = {\rm cap}(K_1)$ for all $j \in \{2, \ldots,k\}$. Moreover, taking into account the variational characterization of the Newtonian capacity, see for instance \cite[(6.64)]{TaylorII}, and using Sobolev's inequality, one can easily see that
    $$
    |E|^{\frac13} \lesssim {\rm cap}(E)\,, \quad \textup{for every compact set } E \subset \RR^3\,.
    $$
    Thus, using Lemma \ref{L.QA}, we get that
    $$
    |K|^{\frac13} = k^{\frac13} |K_1|^{\frac13} \lesssim k^{\frac13} {\rm cap}(K_1) = k^{\frac13} \frac{1}{k} \Sjk {\rm cap}(K_j) \lesssim \frac{1}{k^{\frac23}} {\rm cap}(K)\,.
    $$
    On the other hand, it is straightforward to check that
    $$
    |K| = k |K_1| \leq k |B_{2\sqrt{t\de_k}}(\Xi_{1,k,t})| \lesssim k \de_k^{\frac32}\,.
    $$
    Combining both inequalities we conclude that
    $$
    |K| \lesssim  \frac{1}{k^{\frac23}} {\rm cap}(K) k^{\frac23} \de_k = \de_k {\rm cap}(K)\,.
    $$
\end{proof}

At this point, we can use a capacitary strong type inequality to conclude the proof of Proposition \ref{P.ImprovedSobolev}.

\begin{proof}[Proof of Proposition \ref{P.ImprovedSobolev}]
    First observe that, by density, it is enough to prove the result for $v \in C_c^{\infty}(\RR^3)$ satisfying \eqref{E.k-foldsymmetryR3}. Hence, let $v \in C_c^{\infty}(\RR^3)$ satisfying \eqref{E.k-foldsymmetryR3} be fixed but arbitrary. We set
    $$
    \cB_k := \bigcup_{j=1}^k B_{2\sqrt{t\de_k}}(\Xi_{j,k,t})\,,
    $$
    and observe that
    \begin{align*}
        k \int_{B_{2\sqrt{t\de_k}}(\Xi_{1,k,t})} |v|^2 dx = \int_{\cB_k} |v|^2 dx = 2 \int_0^{\infty} s \,|\{|v| \geq s\} \cap \cB_k | \,ds\,.
    \end{align*}
    Thus, using Lemma \ref{L.QAcorollary}, we get that
    \begin{align*}
        k \int_{B_{2\sqrt{t\de_k}}(\Xi_{1,k,t})} |v|^2 dx \lesssim \de_k \int_0^{\infty} s\, {\rm cap}(\{|v| \geq s\} \cap \cB_k ) ds \leq \de_k \int_0^{\infty} s\, {\rm cap}(\{|v| \geq s\} ) ds \,.
    \end{align*}

    On the other hand, by \cite[Sect. 2.3.1, Theorem]{Mazya}, we know that
    $$
    \int_0^{\infty}  s\, {\rm cap}(\{|v| \geq s\} ) ds \leq 2 \int_{\RR^3} |\nabla v|^2 dx \,.
    $$
    Hence, combining this inequality with the one above, we get that
    $$
    \frac{1}{t\de_k} \int_{B_{2\sqrt{t\de_k}}(\Xi_{1,k,t})} |v|^2 dx \lesssim \frac{1}{k} \int_{\RR^3} |\nabla v|^2 dx =  \int_{\Om_1 \times \RR} |\nabla v|^2 dx\,.
    $$
    The proof is concluded. 
\end{proof}

We set $\RR_+^3 := \{x \in \RR^3: x_3 > 0\}$ and denote by $\dot{H}_k^1(\RR_+^3)$ the subset of all functions $u \in \dot{H}^1(\RR_+^3)$ such that $u$ satisfies \eqref{E.k-foldsymmetryR3}. Likewise, for every $R > 0$ and $\Xi \in \RR^3$, we use the notation $B_R^+(\Xi) := B_R(\Xi) \cap \RR_+^3.$ An immediate (but useful for our purposes) corollary of the previous proposition is the following:

\begin{corollary} \label{C.improvedSobolev}
    For every $0 < a < b < \infty$, there exist $k_{\star} \in \NN$ and $\cC_{\star} > 0$ such that, for all $k \geq k_{\star}$, all $t \in [a,b]$, and all $v \in \dot{H}^1_k(\RR^3_+)$, it follows that
    \begin{equation}
        \frac{1}{t\de_k} \int_{B_{2\sqrt{t\de_k}}^+(\Xi_{1,k,t})} |v|^2 dx \leq \frac{\cC_{\star}}{k} \int_{\RR^3_+} |\nabla v|^2 dx = \cC_{\star} \int_{\Om_1 \times (0,\infty)} |\nabla v|^2 dx\,.
    \end{equation}
\end{corollary}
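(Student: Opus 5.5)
The natural route is to reduce Corollary \ref{C.improvedSobolev} to Proposition \ref{P.ImprovedSobolev} by even reflection across the hyperplane $\{x_3=0\}$. Given $v \in \dot{H}^1_k(\RR^3_+)$, we define
$$
\tilde v(x_1,x_2,x_3) := v(x_1,x_2,|x_3|)\,, \qquad (x_1,x_2,x_3) \in \RR^3\,.
$$
We will check three things in turn.

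First, $\tilde v \in \dot{H}^1(\RR^3)$ with
$$
\int_{\RR^3} |\nabla \tilde v|^2 dx = 2\int_{\RR^3_+} |\nabla v|^2 dx\,.
$$
This is the standard fact that even reflection preserves $H^1_{\rm loc}$, since the traces match across $\{x_3=0\}$. It can be verified by density: approximate $v$ by smooth functions on $\overline{\RR^3_+}$ and note that the reflected approximants are Lipschitz across the plane, with weak gradient given piecewise.

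Second, $\tilde v$ is still $k$-fold symmetric, i.e.\ it satisfies \eqref{E.k-foldsymmetryR3}, because the rotation $R_{2\pi/k}$ around the $x_3$-axis commutes with the reflection $x_3 \mapsto -x_3$. Hence $\tilde v \in \dot{H}^1_k(\RR^3)$.

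Third, since $\Xi_{1,k,t}$ lies on $\{x_3=0\}$, the ball $B_{2\sqrt{t\de_k}}(\Xi_{1,k,t})$ is symmetric under the reflection. Therefore
$$
\int_{B_{2\sqrt{t\de_k}}(\Xi_{1,k,t})} |\tilde v|^2 dx = 2\int_{B^+_{2\sqrt{t\de_k}}(\Xi_{1,k,t})} |v|^2 dx\,.
$$

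Applying Proposition \ref{P.ImprovedSobolev} to $\tilde v$, the factors of $2$ cancel on both sides. This gives the inequality with the same $k_\star$ and $\cC_\star$.

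For the final equality, use that the sectors $R_{2\pi(j-1)/k}\Om_1$, $j\in\{1,\ldots,k\}$, tile $\RR^2$ up to a null set. Indeed, $\Om_1$ is the Voronoi cell of $\xi_{1,k,t}$, namely the sector of angle $2\pi/k$ centered on its ray. By $k$-fold symmetry, $|\nabla v|^2$ has the same integral over each $R_{2\pi(j-1)/k}\Om_1\times(0,\infty)$. So $\int_{\RR^3_+}|\nabla v|^2 = k\int_{\Om_1\times(0,\infty)}|\nabla v|^2$.

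The only mildly delicate point is the first step, justifying that the even extension lies in $\dot H^1$ with the doubled energy for a general element of the homogeneous space $\dot H^1(\RR^3_+)$. I would handle it by density of functions smooth up to the boundary with compact support in $\overline{\RR^3_+}$. If needed, one can first symmetrize these approximants under the rotation group generated by $R_{2\pi/k}$, by averaging over the $k$ rotations, so that $k$-fold symmetry is preserved along the approximation.
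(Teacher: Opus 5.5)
Your proposal is correct and follows the paper's proof exactly. The paper also takes the even extension $v_\sharp(x_1,x_2,x_3)=v(x_1,x_2,|x_3|)$ and notes that it lies in $\dot{H}^1_k(\RR^3)$ with doubled Dirichlet energy and doubled $L^2$ mass on the reflection-symmetric ball centred at $\Xi_{1,k,t}$; it then applies Proposition \ref{P.ImprovedSobolev}, and the factors of $2$ cancel. Your extra justifications (density for the reflection, and the sector tiling for the final equality) are details the paper leaves implicit.
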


\begin{proof}
    For every $v \in \dot{H}^1_k(\RR^3_+)$  we denote by $v_\sharp$ the even extension of $v$ to $\RR^3$, that is
    $$
    v_\sharp(x_1,x_2,x_3) = v(x_1,x_2,|x_3|) \quad \textup{for all } x \in \RR^3\,.
    $$
    Then, it follows that $v_\sharp \in \dot{H}^1_k(\RR^3)$ and that
    $$
    \|\nabla v\|_{L^2(\RR_+^3)}^2 = \frac12 \|\nabla v_\sharp\|_{L^2(\RR^3)}^2 \quad \textup{and} \quad  \int_{B_{2\sqrt{t\de_k}}^+(\Xi_{1,k,t})} |v|^2 dx = \frac12  \int_{B_{2\sqrt{t\de_k}}(\Xi_{1,k,t})} |v_\sharp|^2 dx\,.
    $$
    The result immediately follows from Proposition \ref{P.ImprovedSobolev}.
\end{proof}

\section{The half-Yamabe theorem. Proof of Theorem \ref{T.mainThmYamabe}} \label{S.2}

Having the notation introduced at the beginning of Section \ref{S.PIT} at hand, see \eqref{E.bubbleDeltaXi}--\eqref{E.deltaXi}, we can readily present our approximate solution
\begin{equation} \label{E.ansatz}
U_{k,t}(x) := \sum_{j=1}^k U_{j,k,t}(x) - U(x)\,.
\end{equation}
We refer to Figures \ref{k5} and \ref{k12} for a representation of $U_{k,t}$ and its level curves when $k = 5$ and $k = 12$ respectively. Actually, this is how the solutions we will construct should look like.

\begin{figure}[h]
 \centering
\begin{subfigure}{.5\textwidth}
  \centering
  \includegraphics[width=1.3\linewidth]{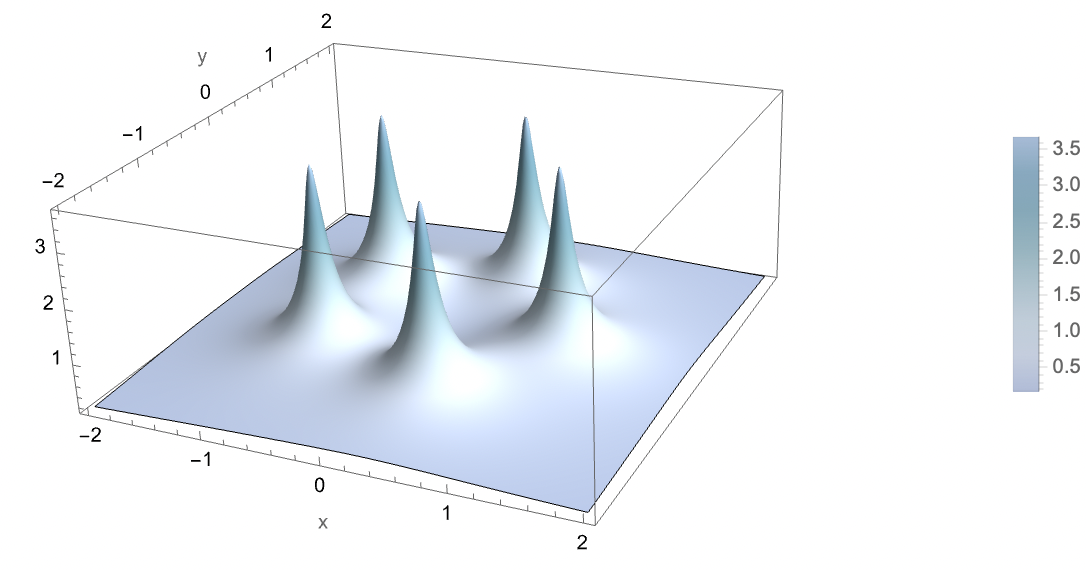}
\end{subfigure}%
\begin{subfigure}{.5\textwidth}
  \centering
  \includegraphics[width=1\linewidth]{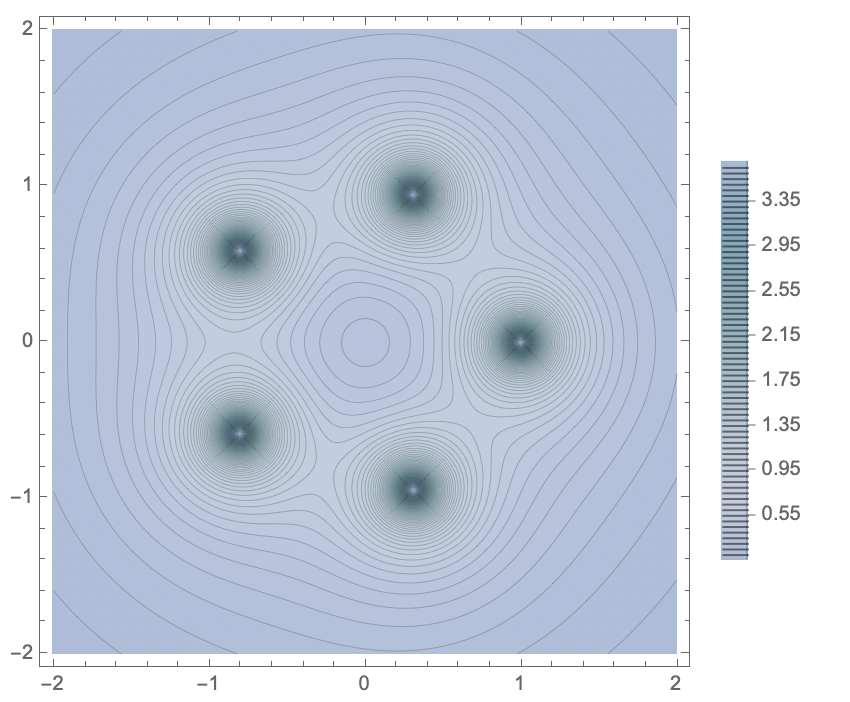}
\end{subfigure}
\caption{Schematic picture of $U_{k,t}$ and its level curves when $k = 5.$}
\label{k5}
\end{figure}

\begin{figure}[h]
 \centering
\begin{subfigure}{.5\textwidth}
  \centering
\includegraphics[width=1\linewidth]{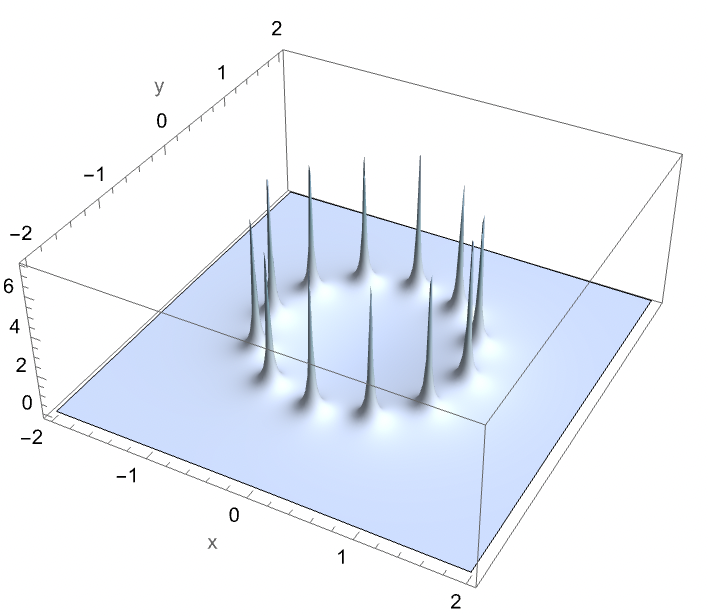}
\end{subfigure}%
\begin{subfigure}{.5\textwidth}
  \centering
\includegraphics[width=1\linewidth]{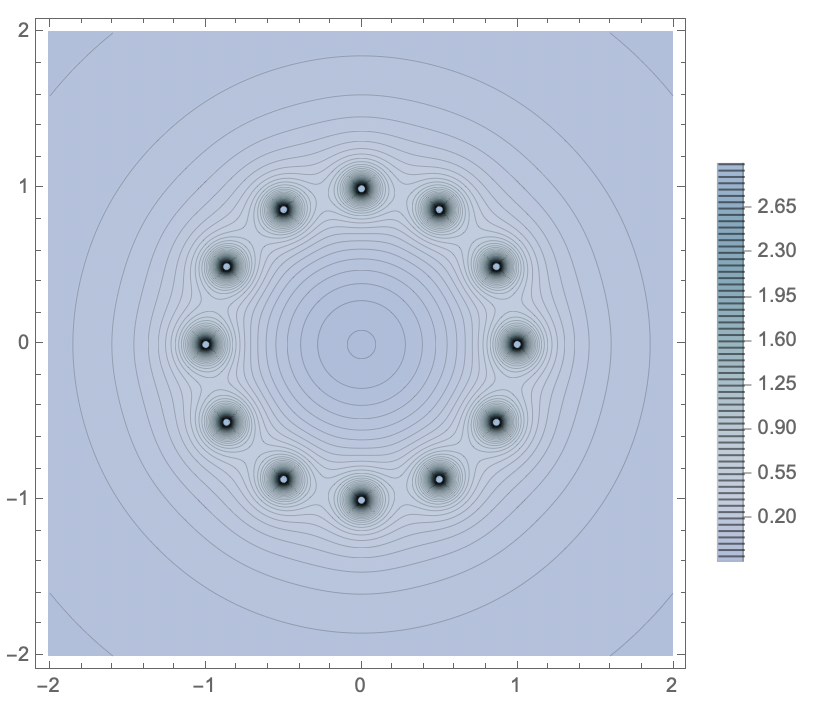}
\end{subfigure}
\caption{Schematic picture of $U_{k,t}$ and its level curves when $k = 12.$}
\label{k12}
\end{figure}

As a first step towards the proof of Theorem \ref{T.mainThmYamabe}, we will measure how well this approximate solution ``solves'' \eqref{E.Yamabe}, but first we are going to reformulate \eqref{E.Yamabe} using the inverse of the half-Laplacian. By Sobolev inequality, the embedding $\dot{H}^{\frac12} \hookrightarrow L^4$ is continuous. Hence, using the Riesz representation theorem, we can define the continuous operator
\begin{align}\label{43H12}
(-\Delta)^{-\frac12} : L^{\frac43} \to \dot{H}^{\frac12}\,, \quad f \mapsto u\,,
\end{align}
where $u \in \dot{H}^{\frac12}$ is the unique solution to
$$
(-\Delta)^{\frac12} u = f \quad \textup{in } \RR^2\,.
$$
Actually, we explicitly have that
\begin{align*}
    (-\Delta)^{-\frac12} f(x) :=\frac{1}{2\pi}\int_{\mathbb{R}^2}\frac{f(y)}{|x-y|}dy\,. 
\end{align*}

Then, we can reformulate \eqref{E.Yamabe} as
\begin{equation} \label{E.YamabeInverse}
u = (-\Delta)^{-\frac12} (u^3) \quad \textup{in } \RR^2\,.
\end{equation}
Having our approximate solution $U_{k,t}$ and this reformulation at hand, for $k \in \NN$ with $k \geq 2$ and $t > 0$, we set
\begin{equation}\label{E.error}
\cE_{k,t} := U_{k,t}^3 - (-\Delta)^{\frac12} U_{k,t} \quad \textup{and} \quad \cE_{k,t}^{*} := (-\Delta)^{-\frac12} \cE_{k,t}\,,
\end{equation}
and prove the following.


\begin{lemma} \label{L.error}
For every $0 < a < b < \infty$, there exists a constant $\cC_0 > 0$ such that
\begin{equation}
\|\cE_{k,t}^{*}\|_{\dot{H}^{\frac12}} \leq \frac{\cC_0}{\log k}\,,
\end{equation}
for all $k \geq 2$ and all $t \in [a,b]$. 
\end{lemma}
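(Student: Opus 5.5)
Since each $U_{j,k,t}$ and $U$ solve \eqref{E.Yamabe}, we have $(-\Delta)^{\frac12}U_{k,t} = \sum_{j=1}^k U_{j,k,t}^3 - U^3$. Hence
$$
\cE_{k,t} = \Big(\sum_{j=1}^k U_{j,k,t} - U\Big)^3 - \sum_{j=1}^k U_{j,k,t}^3 + U^3 .
$$
By \eqref{43H12} and Sobolev's inequality, $\|\cE^*_{k,t}\|_{\Hud}\lesssim \|\cE_{k,t}\|_{L^{4/3}}$, so it suffices to show $\|\cE_{k,t}\|_{L^{4/3}}\lesssim (\log k)^{-1}$. By $k$-fold symmetry, $\|\cE_{k,t}\|_{L^{4/3}}^{4/3} = k\int_{\Om_1}|\cE_{k,t}|^{4/3}dx$. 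On $\Om_1$ I write $A:=U_{1,k,t}$ and $B:=\sum_{j\ge2}U_{j,k,t}-U$, and expand:
$$
|\cE_{k,t}| \lesssim A^2|B| + A B^2 + \Big|\Big(\sum_{j\ge 2}U_{j,k,t}-U\Big)^3 - \sum_{j\ge2}U_{j,k,t}^3 + U^3\Big|.
$$
The last term is bounded by $\big(\sum_{j\ge2}U_{j,k,t}\big)^3 + \big(\sum_{j\ge2}U_{j,k,t}\big)^2U + \big(\sum_{j\ge2}U_{j,k,t}\big)U^2 + \sum_{j\ge2}U_{j,k,t}^3$, and in $\Om_1$ each of these is dominated pointwise by products of the form $U^{3-\al-\be}U_{1,k,t}^{\al}\big(\sum_{j\ge2}U_{j,k,t}\big)^{\be}$ with $\al+\be\le 3$.

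Raising to the power $4/3$, every resulting term has the form $\int_{\Om_1}U^{4-\al-\be}U_{1,k,t}^{\al}\big(\sum_{j\ge2}U_{j,k,t}\big)^{\be}$, where $\al+\be=4$ when there is no factor of $U$. This is exactly the form controlled by Lemma \ref{L.PremoselliVetois}. Using $\de_k=(k\log k)^{-2}$, the relevant cases are as follows.

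\textbf{The term $A^2U$.} Here $(\al,\be)=(8/3,0)$ with a factor $U^{4/3}$. The lemma gives $\de_k^{4/3}\big(k^{-1}k^{2/3}+\de_k^{-2/3}\big)\approx\de_k^{2/3}$. Multiplying by $k$ gives $k(k\log k)^{-4/3}\to0$, which is fine.

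\textbf{The term $A^2\sum_{j\ge2}U_{j,k,t}$.} Here $(\al,\be)=(8/3,4/3)$. The bound is $\de_k^{2}\big(k^{1/3}k^{2/3}(\log k)^{4/3}+(k\log k)^{4/3}\de_k^{-2/3}\big)$. The dominant piece is $\de_k^{4/3}(k\log k)^{4/3}=(k\log k)^{-4/3}$, so after multiplying by $k$ this term is small.

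\textbf{Terms with only $U$ and $\sum_{j\ge2}U_{j,k,t}$ and no $A$.} These are the worst. For example, $U^{8/3}\big(\sum_{j\ge2}U_{j,k,t}\big)^{4/3}$ with $(\al,\be)=(0,4/3)$ gives $\de_k^{2/3}\big(k^{1/3}+(k\log k)^{4/3}k^{-2}\big)$. Multiplying by $k$ gives $k^{4/3}\de_k^{2/3}=(\log k)^{-4/3}$. This is precisely where the choice $\de_k=(k\log k)^{-2}$ enters, and it yields $\|\cE_{k,t}\|_{L^{4/3}}\lesssim(\log k)^{-1}$. The terms $\big(\sum_{j\ge2}U_{j,k,t}\big)^{4}$ and $\big(\sum_{j\ge2}U_{j,k,t}\big)^{8/3}U^{4/3}$ have larger $\be$ with the same $\al=0$. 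They give $\de_k^{2}k^{3}$ or $\de_k^{2}(k\log k)^4k^{-2}$, and after multiplying by $k$ these are smaller powers of $(\log k)^{-1}$. The mixed terms $A\big(\sum_{j\ge2}U_{j,k,t}\big)^2$ and $AU^2$ are handled by the $\al=4/3<2$ case in the same way.

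\textbf{Main obstacle.} The real difficulty is the bookkeeping near $\xi_{1,k,t}$ rather than any individual estimate. The terms with $\al>2$ produce factors $\de_k^{2-\al}$ from $f_2$, and each such factor must be checked to be compensated. The decisive size comes from the interaction of $U$ with the ring of bubbles, which is $O\big((\log k)^{-1}\big)$ exactly. I would therefore organize the expansion carefully so that no term carries a stray factor such as $\log\de_k$. The case $\al=2$ never arises, since all the exponents $\al$ that occur are multiples of $4/3$.
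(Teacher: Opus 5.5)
Your proposal is correct and follows essentially the paper's argument: reduce to $\|\cE_{k,t}\|_{L^{4/3}}$, localize to $\Om_1$ by $k$-fold symmetry, and bound eight integrals with Lemma \ref{L.PremoselliVetois}. Your expansion in $A=U_{1,k,t}$ and $B$ produces exactly the integrals the paper obtains by splitting into $\cE^{(1)},\cE^{(2)},\cE^{(3)}$, and the term $U^{8/3}\big(\sum_{j\ge2}U_{j,k,t}\big)^{4/3}$ gives the sharp $(\log k)^{-1}$. The only slip is cosmetic: for $(\al,\be)=(0,8/3)$ the lemma gives $\de_k^{4/3}\big(k^{5/3}+(k\log k)^{8/3}k^{-2}\big)$, not the $\de_k^{2}k^{3}$-type bounds you quote (those are for $\be=4$), but it still yields $O\big((\log k)^{-8/3}\big)$ after multiplying by $k$.
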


\begin{proof}
We first notice that
\begin{align*}
\|\cE^*_{k,t}\|_{\Hud}=\|(-\Delta)^{-\frac{1}{2}}\cE_{k,t}\|_{\Hud}=\|(-\Delta)^{-\frac{1}{4}}\cE_{k,t}\|_{L^2}\lesssim \|\cE_{k,t}\|_{L^\frac{4}{3}}.
\end{align*}
Hence, it suffices to show that
$$
\|\cE_{k,t}\|_{L^{\frac43}} \lesssim \frac{1}{\log k}\,.
$$
By definition and using that \eqref{E.bubbleDeltaXi} solves \eqref{E.Yamabe}, we have that
\begin{align*}
    \cE_{k,t} & =U_{k,t}^3-\Dud U_{k,t}=\bigg(\sum_{j=1}^k U_{j,k,t}-U\bigg)^3 - \sum_{j=1}^k U_{j,k,t}^3 + U^3 \\
    & =\underbrace{\bigg(\Sjk U_{j,k,t}\bigg)^3 - \sum_{j=1}^k U_{j,k,t}^3}_{\cE_{k,t}^{(1)}}-\underbrace{3U\bigg(\Sjk U_{j,k,t}\bigg)^2}_{\cE_{k,t}^{(2)}}+\underbrace{3U^2\Sjk U_{j,k,t}}_{\cE_{k,t}^{(3)}}.
\end{align*}
Moreover, we denote by $\Om_1$ the sector
\begin{align*}
\Om_1  & := \big\{(r\cos(\theta),\,r\sin(\theta))\in \mathbb{R}^2\,:\, r\in \mathbb{R}^+,\,\theta\in (-\tfrac{\pi}{k},\tfrac{\pi}{k})\big\} \\
& \ =\big\{ x\in \mathbb{R}^2\,:\, |x-\xi_{1,k,t}| < |x-\xi_{j,k,t}|,\,\forall j\in\{2,...,k\}\big\}.\end{align*}
Thus, from the symmetry of $\cE_{k,t}$, it follows that
\begin{equation} \label{E.errorSector}
    \| \cE_{k,t}\|_{L^\frac{4}{3}}= k^\frac{3}{4}\| \cE_{k,t}\|_{L^\frac{4}{3}(\Omega_1)},
\end{equation}
and that
\begin{equation} \label{E.errorDecomposition}
\|\cE_{k,t}\|_{L^\frac{4}{3}(\Omega_1)}\leq\|\cE^{(1)}_{k,t}\|_{L^\frac{4}{3}(\Omega_1)}+\|\cE^{(2)}_{k,t}\|_{L^\frac{4}{3}(\Omega_1)}+\|\cE^{(3)}_{k,t}\|_{L^\frac{4}{3}(\Omega_1)}. 
\end{equation}

We estimate the terms $\cE^{(i)}_{k,t}$ for $i \in \{1,2,3\}$ separately using Lemma \ref{L.PremoselliVetois} appropriately. We first deal with $\cE^{(3)}_{k,t}$. Note that
\begin{align*}
    \|\cE^{(3)}_{k,t}\|^\frac{4}{3}_{L^\frac{4}{3}(\Omega_1)}\lesssim&\int_{\Omega_1}U^\frac{8}{3}\bigg(\Sjk U_{j,k,t}\bigg)^\frac{4}{3} dx
    \lesssim  \int_{\Omega_1}U^\frac{8}{3}U_{1,k,t}^\frac{4}{3}dx +\int_{\Omega_1}U^\frac{8}{3}\bigg(\sum_{j=2}^k U_{j,k,t}\bigg)^\frac{4}{3}dx.
\end{align*}
For the first integral we use Lemma \ref{L.PremoselliVetois} with $\alpha=4/3$ and $\beta=0$ to get
\begin{align*}
    \int_{\Omega_1}U^\frac{8}{3}U_{1,k,t}^\frac{4}{3}dx\lesssim (\delta_k)^\frac{2}{3}(k^{-1}+k^{-\frac{2}{3}})\lesssim \frac{1}{k^2\log^\frac{4}{3}(k)}.
\end{align*}
For the second one we use Lemma \ref{L.PremoselliVetois} with $\alpha=0$ and $\beta=4/3$, which yields
\begin{align*}
\int_{\Omega_1}U^\frac{8}{3}\bigg(\sum_{j=2}^k U_{j,k,t}\bigg)^\frac{4}{3}dx \lesssim \delta_k ^\frac{2}{3}(k^\frac{1}{3}+(k\log(k))^\frac{4}{3}k^{-2})\lesssim \frac{1}{k\log^\frac{4}{3}(k)} .
\end{align*}
Therefore, it follows that
\begin{equation} \label{E.Error3}
 \|\cE^{(3)}_{k,t}\|_{L^\frac{4}{3}(\Omega_1)}\lesssim\frac{1}{k^\frac{3}{4}\log(k)}. 
\end{equation}

Next, we consider $\cE^{(2)}_{k,t}$. Note that
\begin{align*}
    \|\cE_{k,t}^{(2)}\|_{L^\frac{4}{3}(\Omega_1)}^\frac{4}{3}\lesssim \int_{\Omega_1}U^\frac{4}{3}\bigg(\Sjk U_{j,k,t}\bigg)^\frac{8}{3}dx\lesssim \int_{\Omega_1} U^\frac{4}{3}U_{1,k,t}^\frac{8}{3}dx+\int_{\Omega_1}U^\frac{4}{3}
\bigg(\sum_{j=2}^kU_{j,k,t}\bigg)^\frac{8}{3}dx.
\end{align*}
For the first integral we use Lemma \ref{L.PremoselliVetois} with $\alpha=8/3$ and $\beta=0$ to get
\begin{align*}
    \int_{\Omega_1}U^\frac{4}{3}U_{1,k,t}^\frac{8}{3}dx\ls \delta_k^\frac{4}{3}(k^{-\frac{1}{3}}+\delta_k^{-\frac{2}{3}})\ls \delta_k^\frac{2}{3}\ls
    \frac{1}{k^\frac{4}{3}\log^\frac{4}{3}(k)}.
\end{align*}
For the second we use Lemma \ref{L.PremoselliVetois} with $\alpha=0$ and $\beta=8/3$. Thus, we get that
\begin{align*}
    \int_{\Omega_1}U^\frac{4}{3}
\bigg(\sum_{j=2}^kU_{j,k,t}\bigg)^\frac{8}{3}dx\sim \delta_k^\frac{4}{3}\left(k^\frac{5}{3}+(k\log(k))^\frac{8}{3}k^{-2}\right)\ls \delta_k^\frac{4}{3}k^\frac{5}{3}\ls \frac{1}{k\log^\frac{8}{3}(k)},
\end{align*}
and so that
\begin{equation} \label{E.error2}
    \|\cE^{(2)}_{k,t}\|_{L^\frac{4}{3}(\Omega_1)}\lesssim \frac{1}{k^\frac{3}{4}\log^2(k)}. 
\end{equation}
 
Finally, for $\cE_{k,t}^{(1)}$, we first note that
\begin{align*}
    \bigg(\Sjk U_{j,k,t}\bigg)^3& =\bigg(U_{1,k,t}+\sum_{j=2}^k U_{j,k,t}\bigg)^3 \\
    & =U_{1,k,t}^3+\bigg(\sum_{j=2}^k U_{j,k,t}\bigg)^3+3U_{1,k,t}^2\bigg(\sum_{j=2}^k U_{j,k,t}\bigg)+3U_{1,k,t}\bigg(\sum_{j=2}^k U_{j,k,t}\bigg)^2,
\end{align*}
and so that
\begin{align*}
    \bigg(\Sjk U_{j,k,t}\bigg)^3-\Sjk U_{j,k,t}^3=\bigg(\sum_{j=2}^k U_{j,k,t}\bigg)^3+3U_{1,k,t}^2\bigg(\sum_{j=2}^k U_{j,k,t}\bigg)+3U_{1,k,t}\bigg(\sum_{j=2}^k U_{j,k,t}\bigg)^2-\sum_{j=2}^k U_{j,k,t}^3\,.
\end{align*}
Hence, it follows that
\begin{align*}
    \|\cE_{k,t}^{(1)}\|_{L^{\frac43}(\Om_1)}^{\frac43} \lesssim  &\ \int_{\Omega_1}\bigg(\sum_{j=2}^k U_{j,k,t}\bigg)^4 dx +\int_{\Omega_1} U_{1,k,t}^\frac{8}{3}\bigg(\sum_{j=2}^k U_{j,k,t}\bigg)^\frac{4}{3} dx \\
    & +\int_{\Omega_1}U_{1,k,t}^\frac{4}{3} \bigg(\sum_{j=2}^kU_{j,k,t}\bigg)^\frac{8}{3}dx+\int_{\Omega_1}\bigg(\sum_{j=2}^k U_{j,k,t}^3\bigg)^\frac{4}{3} dx.
\end{align*}
For the first integral we use Lemma \ref{L.PremoselliVetois} with $\alpha=0$ and $\beta=4,$ and get
\begin{align*}
    \int_{\Omega_1}\bigg(\sum_{j=2}^k U_{j,k,t}\bigg)^4 dx\ls \delta_k^2\left(k^3+(k\log(k))^4k^{-2}\right)\ls \frac{1}{k\log^4(k)}.
\end{align*}
For the second we use Lemma \ref{L.PremoselliVetois} with $\alpha=8/3$ and $\beta=4/3.$ We then get that
\begin{align*}
    \int_{\Omega_1} U_{1,k,t}^\frac{8}{3}\bigg(\sum_{j=2}^k U_{j,k,t}\bigg)^\frac{4}{3}dx\ls \delta_k^2(k \log^\frac{4}{3}(k)+k^\frac{4}{3}\log^\frac{4}{3}(k)\delta_k^{-\frac{2}{3}})\ls \delta_k^\frac{4}{3}k^\frac{4}{3}\log^\frac{4}{3}(k) = \frac{1}{k^\frac{4}{3}\log^\frac{4}{3}(k)}.
\end{align*}
For the third one we use Lemma \ref{L.PremoselliVetois} with $\alpha=4/3$ and $\beta=8/3,$ which yields
\begin{align*}
\int_{\Omega_1}U_{1,k,t}^\frac{4}{3} \bigg(\sum_{j=2}^kU_{j,k,t}\bigg)^\frac{8}{3}dx\leq \delta_k^2( k^\frac{2}{3}+k^\frac{8}{3}\log^\frac{8}{3}(k)k^{-\frac{2}{3}})\ls \delta_k^2 k^2\log^\frac{8}{3}(k) = \frac{1}{k^2\log^\frac{4}{3}(k)}.
\end{align*}
Finally, since the $U_{j,k,t}$ are positive functions, we can use the estimate of the first integral for the fourth one. Therefore, it follows that
\begin{equation} \label{E.error1}
    \|\cE_{k,t}^{(1)}\|_{L^\frac{4}{3}(\Omega_1)}\ls \frac{1}{k^\frac{3}{4}\log^3(k)}. 
\end{equation}
Taking into account \eqref{E.errorSector} and \eqref{E.errorDecomposition}, the result follows from \eqref{E.Error3}, \eqref{E.error2} and \eqref{E.error1}.
\end{proof}

Next, we define the function spaces for the perturbations $\phi$ of the approximate solution $U_{k,t}$. For every $k \in \NN$ with $k \geq 2$, we let $\cH_k$ be the subset of $\phi \in \dot{H}^{\frac12}$ such that:
\begin{itemize}
\item[(i)] $\phi$ is even with respect to $x_2$,
\item[(ii)] $\phi$ is $k$-fold symmetric,
\item[(iii)] $\phi$ is invariant under the action of the Kelvin transform $\bK$.
\end{itemize}
More explicitly, we introduce the symmetries
\begin{align}
& \phi(x_1,x_2) = \phi(x_1,-x_2) \,, \quad \textup{for all } x = (x_1,x_2) \in \RR^2\,, \label{E.even}\\
& \phi(r \cos(\theta), r \sin(\theta)) = \phi\big( r \cos(\theta + \tfrac{2\pi}{k}), r \sin(\theta + \tfrac{2\pi}{k} )\big)\,, \quad \textup{for all } r > 0, \ \theta \in [0,2\pi)\,, \label{E.kfold} \\ 
& \phi(x) = \bK[\phi](x) := \tfrac{1}{|x|} \phi \Big( \tfrac{x}{|x|^2} \Big)\,, \quad \textup{for all } x \in \RR^2 \setminus \{0\}\,, \label{E.kelvin}
\end{align}
and set
\begin{equation}
\cH_k := \big\{ \phi \in \dot{H}^{\frac12} : \phi \textup{ satisfies  \eqref{E.even}, \eqref{E.kfold} and \eqref{E.kelvin}} \big\}\,.
\end{equation} 
Note that $\cH_k$ is a Hilbert space endowed with the same scalar product and the same norm as $\dot{H}^{\frac12}$. Also, let us stress that $U_{k,t} \in \cH_k$ for all $t > 0$ and all $k \in \NN$ with $k \geq 2$.

Let us list some properties of the Kelvin transform which will be used in the paper:
\begin{enumerate}
    \item $\bK^2=\mathrm{Id}.$
    \item If $f\in C^\infty(\RR^2)$ is $k-$fold symmetric and even with respect to $x_2$, then so is $\bK[f]$.
    \item Let $f\in C^\infty_c(\RR^2)$. Then 
    \begin{align*}(-\Delta)^\frac{1}{2}\bK [f](x)=\ &|x|^{-3}(-\Delta )^\frac{1}{2}f\left(\frac{x}{|x|^2}\right),\\
    \bK[(-\Delta)^{\frac{1}{2}} f](x)=\ & (-\Delta)^{-\frac{1}{2}}\left(|\cdot|^{-2}\bK[f](\cdot)\right)(x).\end{align*}
    \item Let $f$ and $g\in \dot{H}^\frac{1}{2}$. Then 
    \begin{align*}
        \langle f, \bK[g]\rangle=\langle \bK[f], g\rangle, 
    \end{align*}
    and $\bK$ is an isometry in $\Hud$. Here $\langle \,\rangle$ denotes the scalar product in $\Hud$.
    \item If $f\in \cH_k$ and $g\in \Hud$ then
    \begin{align*}
        \langle f, \bK[g]\rangle =\langle f, g\rangle.
    \end{align*}
\end{enumerate}

Then, we look for a solution to \eqref{E.Yamabe} which is a perturbation of $U_{k,t}$:
\begin{equation}\label{E.solutionForm}
\psi_k = U_{k,t} + \phi\,,
\end{equation}
with $\phi \in \cH_k$ for some $k \in \NN$ sufficiently large. It is not difficult to see that $U_{k,t}\in \cH_k$. To check that it is in $\Hud$ we  notice that $U\in L^4(\RR^2)$ and that the scaling $\delta^{-\frac{1}{2}}U(x\delta^{-1})$ is invariant in $\Hud$. The reason why we have introduced the factor $\sqrt{1-t\delta_k}$ in the vectors $\xi_{j,k,t}$ is to get that $U_{k,t}$ is Kelvin invariant. The $k-$fold and reflection symmetries follow from the construction.

Thus, \eqref{E.Yamabe} is reduced to 
\begin{equation}\label{E.YamabePhi}
\cL_{k,t} \phi = \cE_{k,t} + \cN_{k,t} (\phi)\,, \quad \phi \in \cH_k\,,
\end{equation}
where
\begin{equation*} 
\cL_{k,t} \phi := (-\Delta)^{\frac12} \phi - 3 U_{k,t}^2 \phi \quad \textup{and} \quad \cN_{k,t}(\phi) := 3 U_{k,t} \phi^2 + \phi^3\,. 
\end{equation*}
Since we want to use the inverse formulation given in \eqref{E.YamabeInverse} instead of \eqref{E.Yamabe}, we set
$$
\cL_{k,t}^{*}: \cH_k \to \cH_k, \quad \phi \mapsto \phi - (-\Delta)^{-\frac12} (3 U_{k,t}^2 \phi) 
$$
and
$$
\cN_{k,t}^{*}: \cH_k \to \cH_k, \quad    \phi \mapsto (-\Delta)^{-\frac12} (\cN_{k,t}(\phi))\,,
$$
and reformulate \eqref{E.YamabePhi} as
\begin{equation} \label{E.YamabePhiInverse}
\cL_{k,t}^{*} \phi = \cE_{k,t}^{*} + \cN_{k,t}^{*} (\phi)\,, \quad \phi \in \cH_k\,.
\end{equation}

The fact that $\cL_{k,t}^{*}$  and $\cN_{k,t}^{*}$ map $\cH_k$ to $\cH_k$ follow from \eqref{43H12}, the embedding $\Hud \subset L^4$ and the property iii) of the Kelvin transform. Again the $k-$fold and reflection symmetries are easy to check.

The general idea of the proof would be to invert the linear operator $\cL_{k,t}^{*}$, apply its inverse in \eqref{E.YamabePhiInverse} and then solve the equation via a fixed point argument. However, that is not possible in our case. One of the main difficulties to solve \eqref{E.YamabePhiInverse} is that the norm of the operator $\cL_{k,t}^{*}$ is not uniformly bounded in $k$, and thus one can not find a uniformly bounded inverse in $k$. If one wants to consider $k$ sufficiently large, as we want, this will prevent any fixed point argument from working. To overcome this, we will use a Lyapunov-Schmidt reduction argument, that is, decompose the equation \eqref{E.YamabePhiInverse} into a system of two equations by projecting in some appropriate space. 

By  \cite[Theorem 1.1]{DdPS13} and \cite[Lemma A.1]{CKW25} we know that the set of $\dot{H}^{\frac12}$-solutions to 
$$
(-\Delta)^{\frac12} \phi - 3U^2 \phi = 0 \quad \textup{in } \RR^2\,,
$$
is  ${\rm span}\{Z^{(0)},\, Z^{(1)},\, Z^{(2)}\}$ where
\begin{equation} \label{E.Z012}
Z^{(0)}(x) = \frac12 \frac{|x|^2-1}{(1+|x|^2)^{\frac32}} \quad \textup{and} \quad Z^{(i)}(x) =  \frac{x_i}{(1+|x|^2)^{\frac32}} \quad \textup{for }i \in \{1,2\}\,.
\end{equation}
For $\delta > 0$ and $\xi \in \RR^2$, we set
$$
Z^{(0)}_{\de,\xi}(x):= \frac{1}{\sqrt{\delta}} Z^{(0)}\Big( \frac{x-\xi}{\de} \Big)\,,
$$
and, for $t > 0$, $k \in \NN$ with $k \geq 2$ and $j \in \{1, \ldots,k\}$, we define
\begin{equation}\label{E.Zkt}
Z_{k,t}(x) := \sum_{j=1}^k Z_{t\delta_k,\xi_{j,k,t}}^{(0)} (x) = \frac{\sqrt{t\delta_k}}{2} \sum_{j=1}^k \frac{|x-\xi_{j,k,t}|^2-t^2\de_k^2}{(t^2\de_k^2+|x-\xi_{j,k,t}|^2)^{\frac32}} \,.
\end{equation}
For later purposes, we also introduce the notation
$$
Z_{j,k,t}(x) := Z_{t\de_k, \xi_{j,k,t}}^{(0)}(x)\,.
$$
\noindent Before going any further, let us stress that $Z_{k,t}\in \Hud$ is $k-$fold symmetric and even with respect to $x_2$ but is  neither invariant under the action of the Kelvin transform $\bK$ nor anti-invariant. Indeed, we have the following result, which states that $Z_{k,t}$ is almost Kelvin invariant:

\begin{lemma} \label{KZkt} For $t > 0$ and $k \in \NN$ with $k \geq 2$, the Kelvin transform of $Z_{k,t}$ is given by 
\begin{align*}\bK[Z_{k,t}](x)= Z_{k,t}(x) + (t\de_k)^{\frac52} (1-|x|^2) \Sjk \frac{1}{\left((t\delta_k)^2+|x-\xi_{j,k,t}|^2\right)^\frac{3}{2}}.
\end{align*}
\end{lemma}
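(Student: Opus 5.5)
The proof is a direct computation, carried out bubble by bubble. Since $\bK$ is linear, it suffices to compute $\bK[Z_{j,k,t}]$ for each $j \in \{1,\ldots,k\}$ and then sum over $j$. Throughout, write $\de' := t\de_k$ and $\xi := \xi_{j,k,t}$, and set
$$
D(x) := \de'^2 + |x-\xi|^2\,.
$$
By \eqref{E.Zkt}, and using $|x-\xi|^2 - \de'^2 = D(x) - 2\de'^2$,
$$
Z_{j,k,t}(x) = \frac{\sqrt{\de'}}{2}\, \frac{D(x) - 2\de'^2}{D(x)^{\frac32}}\,.
$$

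\emph{Step 1: inversion identity for $D$.} The key observation is that $|\xi|^2 = 1-\de'^2$ by \eqref{E.deltaXi}, and this choice of $\xi$ makes $D$ transform homogeneously under inversion. Set $y := x/|x|^2$. Then
$$
|y-\xi|^2 = |x|^{-2}\big(1 - 2x\cdot\xi + |\xi|^2|x|^2\big)\,,
$$
and therefore
$$
D(y) = |x|^{-2}\big(\de'^2|x|^2 + 1 - 2x\cdot \xi + (1-\de'^2)|x|^2\big) = |x|^{-2}\big(1 - 2x\cdot\xi + |x|^2\big)\,.
$$
Expanding $D(x) = \de'^2 + |x|^2 - 2x\cdot\xi + |\xi|^2 = 1 - 2x\cdot\xi + |x|^2$ gives the same expression, so
$$
D\big(x/|x|^2\big) = |x|^{-2} D(x)\,.
$$
This is the same mechanism that makes $U_{j,k,t}$, and hence $U_{k,t}$, Kelvin invariant.

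\emph{Step 2: apply $\bK$.} Plugging the identity from Step 1 into \eqref{E.kelvin}, we get
$$
\bK[Z_{j,k,t}](x) = \frac{1}{|x|}\,\frac{\sqrt{\de'}}{2}\,\frac{|x|^{-2}D(x) - 2\de'^2}{|x|^{-3}D(x)^{\frac32}} = \frac{\sqrt{\de'}}{2}\,\frac{D(x) - 2\de'^2|x|^2}{D(x)^{\frac32}}\,.
$$
Subtracting $Z_{j,k,t}(x)$ then yields
$$
\bK[Z_{j,k,t}](x) - Z_{j,k,t}(x) = \frac{\sqrt{\de'}}{2}\,\frac{2\de'^2\big(1-|x|^2\big)}{D(x)^{\frac32}} = \de'^{\frac52}\,\frac{1-|x|^2}{D(x)^{\frac32}}\,.
$$

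\emph{Step 3: sum over $j$.} Summing the identity from Step 2 over $j \in \{1,\ldots,k\}$ gives exactly the formula in the statement of Lemma \ref{KZkt}.

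The only real point is Step 1, namely recognizing that the normalization $|\xi_{j,k,t}|^2 = 1 - t^2\de_k^2$ makes $D$ scale by exactly $|x|^{-2}$ under inversion. Everything after that is elementary algebra, and the identity holds pointwise for $x \neq 0$.
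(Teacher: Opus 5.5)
Your proof is correct and is essentially the paper's argument: a direct bubble-by-bubble computation of $\bK[Z_{j,k,t}]$ that uses $|\xi_{j,k,t}|^2 = 1-(t\delta_k)^2$, followed by summation over $j$. Isolating the inversion identity $D(x/|x|^2) = |x|^{-2}D(x)$ is just a cleaner packaging of the algebra that the paper carries out in a single chain of equalities.
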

\begin{proof}
The proof is just direct calculus. Indeed, note that
\begin{align*}
    \bK[Z^{(0)}_{t\delta_k,\xi_{j,k,t}}](x)&=\frac{\sqrt{t\delta_k}}{2|x|}\frac{\left|\frac{x}{|x|^2}-\xi_{j,k,t}\right|^2-(t\delta_k)^2}{\left((t\delta_k)^2+\left|\frac{x}{|x|^2}-\xi_{j,k,t}\right|^2\right)^\frac{3}{2}}
    =\frac{\sqrt{t\delta_k}|x|}{2}\frac{|x-|x|^2\xi_{j,k,t}|^2-|x|^4(t\delta_k)^2}{\left((t\delta_k)^2|x|^4+|x-\xi_{j,k,t}|x|^2|^2\right)^\frac{3}{2}}\\
    &=\frac{\sqrt{t\delta_k}|x|}{2}\frac{|x|^2+|x|^4(|\xi_{j,k,t}|^2-(t\delta_k)^2)-2|x|^2x\cdot\xi_{j,k,t}}{\left(|x|^2+|x|^4(|\xi_{j,k,t}|^2+(t\delta_k)^2)-2|x|^2x\cdot\xi_{j,k,t}\right)^\frac{3}{2}}\\
    &=\frac{\sqrt{t\delta_k}|x|}{2}\frac{|x|^2+|x|^4(1-2(t\delta_k)^2)-2|x|^2x\cdot\xi_{j,k,t}}{\left(|x|^2+|x|^4-2|x|^2x\cdot\xi_{j,k,t}\right)^\frac{3}{2}}\\
    &=\frac{\sqrt{t\delta_k}}{2}\frac{1+|x|^2-2x\cdot\xi_{j,k,t}-2|x|^2(t\delta_k)^2}{(1+|x|^2-2x\cdot\xi_{j,k,t})^\frac{3}{2}}\\
    &=\frac{\sqrt{t\delta_k}}{2}\frac{|x-\xi_{j,k,t}|^2+(t\delta_k)^2-2|x|^2(t\delta_k)^2}{(|x-\xi_{j,k,t}|^2+(t\delta_k)^2)^\frac{3}{2}}\\
    &=Z^{(0)}_{t\delta_k,\xi_{j,k,t}}(x)+(t\delta_k)^\frac{5}{2}\frac{1-|x|^2}{(|x-\xi_{j,k,t}|^2+(t\delta_k)^2)^\frac{3}{2}}\,.
\end{align*}
Hence, the result immediately follows from the definition of $Z_{k,t}$. 
\end{proof}

Having $Z_{k,t}$, the lemma above at hand  and emphasizing that $\bK[Z_{k,t}]\in \Hud$ is $k-$fold symmetric and even with respect to $x_2$ thanks to properties ii) and iv) of $\bK$, we introduce
\begin{equation} \label{E.K-Kperp}
\cK_{k,t} := {\rm span}\big\{\tfrac12 \big(Z_{k,t}+ \bK[Z_{k,t}]\big)\big\} \quad \textup{and} \quad \cK_{k,t}^{\perp} := \{\phi \in \cH_k : \langle \phi, Z_{k,t} \rangle = 0 \}\,,
\end{equation}
and the orthogonal projections
\begin{equation}
\Pi: \cH_k \to \cK_{k,t} \quad \textup{and} \quad \Pi^{\perp}: \cH_k \to \cK_{k,t}^{\perp}\,,
\end{equation}
so that
$$
\cH_k = \cK_{k,t} \oplus \cK_{k,t}^{\perp}\,.
$$
Let us explicitly point out that here we are using that, for all $\phi \in \cH_k$,  $\langle \phi, Z_{k,t} \rangle = \langle \phi, \frac12 (Z_{k,t} + \bK[Z_{k,t}]) \rangle$. Then, we rewrite \eqref{E.YamabePhiInverse} as the system
\begin{equation}\label{E.phiSystem}
\left\{
\begin{aligned}
\, & \Pi\big[ \cL_{k,t}^{*} \phi - \cE_{k,t}^{*} - \cN_{k,t}^{*} (\phi)  \big]  = 0\,, \\ 
& \Pi^{\perp}\big[ \cL_{k,t}^{*} \phi - \cE_{k,t}^{*} - \cN_{k,t}^{*} (\phi)  \big]= 0\,, 
\end{aligned}
\right. \qquad \phi \in \cH_k\,.
\end{equation}
For later purposes, we also introduce the shortened notation
\begin{equation}
\cL_{k,t}^{\perp,*} := \Pi^{\perp} \cL_{k,t}^{*} \,, \quad \cE_{k,t}^{\perp,*} :=  \Pi^{\perp} \cE_{k,t}^{*} \quad \textup{and} \quad \cN_{k,t}^{\perp,*} := \Pi^{\perp} \cN_{k,t}^{*}\,.
\end{equation}
 We will check that, although $\cL_{k,t}^{*}$ is not uniformly bounded in $k$, the projected operator $\cL_{k,t}^{\perp,*}$ does. That allows us to perform a classical fixed point argument for the second equation in \eqref{E.phiSystem}. Once the second equation is solved in Proposition \ref{P.contraction}, we will come back to the first equation in Section \ref{sec:proof-main-result1}.

\subsection{Linear theory} \label{SS.LT} This subsection is devoted to dealing with the linearized operator $\cL_{k,t}^{\perp,*}$. Our main aim here is to prove the following key uniform (in $k$) estimate:

\begin{lemma}\label{L.aprioriEstimate}
For every $0 < a < b < \infty$, there exist $k_1 \in \NN$ and $\cC_1 > 0$ such that, for all $k \geq k_1$ and all $t \in [a,b]$, it follows that
$$
\|\phi\|_{\dot{H}^{\frac12}} \leq \mathcal{C}_1 \|\cL_{k,t}^{\perp,*}\phi\|_{\dot{H}^{\frac12}},  \quad \textup{for all } \phi \in \cK_{k,t}^{\perp}\,. 
$$ 
\end{lemma}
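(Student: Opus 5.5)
We argue by contradiction, in the spirit of the a priori estimates in \cite{PV19}. Suppose there are sequences $k_n \to \infty$, $t_n \in [a,b]$ and $\phi_n \in \cK_{k_n,t_n}^{\perp}$ with $\|\phi_n\|_{\Hud} = 1$ and $\|\cL_{k_n,t_n}^{\perp,*}\phi_n\|_{\Hud} \to 0$. Writing $h_n := \cL_{k_n,t_n}^{*}\phi_n$, we have $h_n = \Pi^\perp h_n + c_n \tfrac12(Z_{k_n,t_n}+\bK[Z_{k_n,t_n}])$.

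\textbf{Step 1: the multiplier $c_n$ is negligible.} Test $h_n$ against $Z_{k_n,t_n}$. Since $\langle \phi_n, Z_{k,t}\rangle=0$ and $Z_{j,k,t}$ solves $(-\Delta)^{1/2}Z_{j,k,t} = 3U_{j,k,t}^2 Z_{j,k,t}$, we get
$$\langle h_n, Z_{k,t}\rangle = -3\int\big(U_{k,t}^2 Z_{k,t} - \textstyle\sum_j U_{j,k,t}^2 Z_{j,k,t}\big)\phi_n\,dx .$$
Bound this via H\"older ($\phi_n\in L^4$) and the $L^{4/3}$-estimate of the bracket using Lemma \ref{L.PremoselliVetois}. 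The bracket is $o(\|Z_{k,t}\|_{\Hud})$, and $\|Z_{k,t}\|^2_{\Hud}\sim k$. Lemma \ref{KZkt} shows the Kelvin correction is small, so $\|\tfrac12(Z+\bK Z)\|^2\sim k$ as well. Hence $c_n \|Z_{k,t}\|\to 0$, and therefore $\|h_n\|_{\Hud}\to 0$.

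\textbf{Step 2: localize and blow up near the bubbles.} By symmetry, the energies on $\Om_1$ scale like $1/k$. Consider the rescalings $\tilde\phi_n(y) := \sqrt{t_n\de_{k_n}}\,\phi_n(\xi_{1,k_n,t_n}+t_n\de_{k_n} y)$. The $\Hud$ norm of a single-sector piece is only of order $k^{-1/2}$, so instead we normalize by $k_n^{1/2}$. This is exactly where Corollary \ref{C.improvedSobolev} enters. Pass to the harmonic (Caffarelli--Silvestre) extension of $\phi_n$ to $\RR^3_+$, which is $k$-fold symmetric. The corollary gives
$$\frac{1}{t\de_k}\int_{B^+_{2\sqrt{t\de_k}}} |\tilde\phi_n^{ext}|^2 \lesssim \int_{\Om_1\times(0,\infty)}|\nabla\phi_n^{ext}|^2 \sim 1/k,$$
so after multiplying by $k^{1/2}$ the blow-ups are bounded in $\dot H^1$ locally, uniformly on balls of radius $\sim \de_k^{-1/2}\to\infty$ in blown-up variables. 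Extracting a weak limit $\Phi$, it solves $(-\Delta)^{1/2}\Phi = 3U^2\Phi$. The evenness in $x_2$ removes $Z^{(2)}$. The Kelvin invariance, combined with the polygon geometry, relates the radial and normal directions and removes $Z^{(1)}$; this is the delicate point, and we would try a Pohozaev/balancing identity if the symmetry alone does not suffice. Finally, the orthogonality to $Z_{k,t}$ removes $Z^{(0)}$. Hence $\Phi=0$. A similar argument near the origin, and at infinity via $\bK$, shows that the $-U$ part carries no mass: the limit there solves the same linearized equation around $U$, but $Z^{(1)},Z^{(2)}$ are excluded by $k$-fold symmetry for $k\ge3$, and $Z^{(0)}$ is Kelvin anti-invariant.

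\textbf{Step 3: contradiction.} From $1 = \|\phi_n\|^2 = \langle h_n,\phi_n\rangle + 3\int U_{k,t}^2\phi_n^2$, it suffices to show $\int U_{k,t}^2\phi_n^2\to0$. Split this integral into the regions near $\xi_j$ (size $R t\de_k$), near $U$, and the intermediate zone. Near the bubbles, local $L^2$-convergence (compactness) together with $\Phi=0$ kills the contribution. In the intermediate zone, $U_{j,k,t}^2 \le t\de_k/|x-\xi_j|^2$; combining this with the improved Sobolev bound over annuli (the main obstacle, since in two dimensions $\int U_{j}^2\phi^2$ on annuli is only logarithmically controlled), we make it small by choosing $R$ large and using the $\log k$ room coming from $\de_k=(k\log k)^{-2}$. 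The main difficulty is precisely this intermediate estimate, together with the excision of $Z^{(1)}$. This gives $1\le o(1)$, a contradiction.
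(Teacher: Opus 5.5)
Your proposal follows essentially the same route as the paper: contradiction, smallness of the multiplier $c_k$, blow-up at $\xi_{1,k,t}$ with a cut-off at scale $\sqrt{t\delta_k}$ controlled by Corollary~\ref{C.improvedSobolev}, elimination of $Z^{(2)}$, $Z^{(1)}$ and $Z^{(0)}$ by evenness in $x_2$, Kelvin invariance and orthogonality to $Z_{k,t}$ respectively, and finally a unit-scale weak limit that vanishes by radial symmetry and the Kelvin anti-invariance of $Z^{(0)}$. The two points you flag as the main difficulties are easier than you expect. Because $|\xi_{1,k,t}|^2+t^2\delta_k^2=1$, Kelvin invariance alone gives an exact identity writing the $Z^{(1)}$-moment of the rescaled function as $O(t\delta_k)$ times its $Z^{(0)}$-moment, so no Pohozaev identity is needed. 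If you take the inner region to be all of $B_{\sqrt{t\delta_k}}(\xi_{1,k,t})$, there is no intermediate zone: inside it the cut-off blow-ups are bounded in $L^4$ and $U\in L^4$, while outside it H\"older with $\int_{|x-\xi_{1,k,t}|>\sqrt{t\delta_k}}U_{1,k,t}^4\,dx\lesssim\delta_k$ gives $O((k\delta_k)^{1/2})$; the $\log k$ in $\delta_k$ is really needed for the cross terms involving $U\sum_{j}U_{j,k,t}$ and $\sum_{j\ge 2}U_{j,k,t}$.
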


\begin{proof} 
We will argue by contradiction. Assume that the inequality is not true. Then, there exist sequences $(t_k)_k \subset [a,b]$ and $(\phi_k)_k \subset \mathcal{K}^\perp_{k,t_k}$ such that
    $$
    \|\phi_k\|_{\dot{H}^{\frac12}}^2=k \quad \textup{and} \quad \|\cL_{k,t_k}^{\perp,*}\phi_k\|_{\dot{H}^{\frac12}}^2=o(k)\,, \quad \textup{as } k \to \infty\,.
    $$
More explicitly, setting
    $$
    f_k:=\cL_{k,t_k}^{\perp,*}\phi_k\,,
    $$
so that
    \begin{align}\label{ecuphi1}
    \phi_k-(-\Delta)^{-\frac12}(3U^2_{k,t_k}\phi_k)=f_k+2c_k \left(Z_{k,t_k}+\bK[Z_{k,t_k}]\right),
    \end{align}
or equivalently
        \begin{align}\label{ecuphi2}
    (-\Delta)^{\frac12}\phi_k-3U^2_{k,t_k}\phi_k=(-\Delta)^{\frac12} f_k+ 2 c_k(-\Delta)^\frac12 \left(Z_{k,t_k}+\bK[Z_{k,t_k}]\right),
    \end{align}
with
    \begin{align}\label{ck}
    c_k := \frac{\langle \phi_k- (-\Delta)^{-\frac12} (3 U_{k,t_k}^2 \phi_k), Z_{k,t_k} \rangle}{\|Z_{k,t_k}+\bK[Z_{k,t_k}]\|_{\dot{H}^{\frac12}}^2}\,,
    \end{align}
we are assuming that
    $$
    \|\phi_k\|_{\dot{H}^{\frac12}}^2=k \quad \textup{and} \quad \|f_k\|_{\dot{H}^{\frac12}}^2=o(k)\,, \quad \textup{as } k \to \infty\,.
    $$

We will split the proof into several steps. However, before going any further, let us point out that,  since $\phi_k$ is $k$-fold symmetric (see \eqref{E.kfold}) then $(-\Delta)^\frac14 \phi_k$ is $k$-fold and then
    $$
    \int_{\Omega_1}|(-\Delta)^\frac{1}{4}\phi_k|^2dx=1 \quad \textup{and}
    \quad 
    3\int_{\RR^2} U_{k,t_k}^2\phi_k^2 dx=3k\int_{\Omega_1} U_{k,t_k}^2\phi_k^2dx\,.
    $$
Moreover, we should also stress that, for all $\psi \in \Hud$, using the weak formulation of \eqref{ecuphi2} we find
\begin{equation} \label{E.startingR2}
    \langle \phi_k, \psi \rangle - 3 \int_{\RR^2} U_{k,t_k}^2 \phi_k \psi dx = o\big(\sqrt{k} \|\psi\|_{\Hud}\big)\,, \quad \textup{as } k \to \infty\,.
\end{equation}
Note that here we are using Lemma \ref{L.normZ+KZ} and the following estimate: 
\begin{equation} \label{E.ckestimate}
    |c_k| \lesssim \frac{1}{\sqrt{k} \log k}\,,
\end{equation}
which is proved in Appendix \ref{appendix}.

\medbreak
\noindent \textbf{Step 1.} \textit{It follows that}
\begin{align}\label{uphi}
\int_{\Omega_1}U^2_{k,t_k}\phi_k^2 dx=\frac{1}{3}+o(1)\,, \quad \textup{\textit{as} } k \to \infty\,.
\end{align}

In order to prove \eqref{uphi} we notice that, multiplying \eqref{ecuphi2} by $\phi_k$ and integrating, it follows that
\begin{align*}
& \|\phi_k\|^2_{\Hud}-3\int_{\RR^2}U_{k,t_k}^2\phi_k^2 dx =k \int_{\Omega_1} \Big( |(-\Delta)^\frac{1}{4}\phi_k|^2 -3U_{k,t_k}^2\phi_k^2 \Big) dx\\
&\quad =\langle f_k,\phi_k\rangle +2 c_k \langle Z_{k,t_k}+\bK[Z_{k,t_k}],\phi_k\rangle =\langle f_k,\phi_k\rangle +4 c_k \langle Z_{k,t_k},\phi_k\rangle=\langle f_k, \phi_k\rangle\,.
\end{align*}
Also, we have that
\begin{align*}
    \frac{1}{k}|\langle f_k,\phi_k\rangle| \leq \frac{1}{k}\|f_k\|_{\Hud}\|\phi_k\|_{\Hud}=\frac{1}{\sqrt{k}}\|f_k\|_{\Hud}=o(1)\,, \quad \textup{as } k \to \infty\,.
\end{align*}
Hence, \eqref{uphi} follows.

\medbreak 
\noindent \textbf{Step 2.} \textit{It follows that}
\begin{equation} \label{E.Step2LT}
\int_{\Om_1} U_{k,t_k}^2 \phi_k^2 dx = \int_{B_{\sqrt{t\de_k}}(\xi_{1,k,t_k})} U_{1,k,t_k}^2 \phi_k^2 dx + \int_{\Om_1} U^2 \phi_k^2 dx + O \Big( \frac{1}{\log k} \Big)\,, \quad \textup{\textit{as} } k \to \infty\,.
\end{equation}

First of all, by definition
\begin{equation} \label{E.Step2Expansion}
\begin{aligned}
    & \int_{\Omega_1}U^2_{k,t_k}\phi^2_k dx = \int_{\Omega_1}\bigg(\Sjk U_{j,k,t_k}-U\bigg)^2 \phi_k^2 dx =\int_{\Omega_1}\bigg(\bigg(\Sjk U_{j,k,t_k}\bigg)^2+U^2-2U\Sjk U_{j,k,t_k}\bigg)\phi_k^2 dx\\
& \quad = \int_{\Omega_1}\bigg(U_{1,k,t_k}^2+\bigg(\sum_{j=2}^k U_{j,k,t_k}\bigg)^2 + 2 U_{1,k,t_k} \sum_{j=2}^k U_{j,k,t_k}
+U^2-2UU_{1,k,t_k}-2U\sum_{j=2}^kU_{j,k,t_k}\bigg)\phi_k^2 dx \\
& \quad = \int_{B_{\sqrt{t_k\de_k}}(\xi_{1,k,t_k})} U_{1,k,t_k}^2 \phi_k^2 dx + \int_{\Om_1} U^2 \phi_k^2 dx + \int_{\Om_1 \setminus B_{\sqrt{t_k\de_k}}(\xi_{1,k,t_k})} U_{1,k,t_k}^2 \phi_k^2 dx \\
& \qquad + \int_{\Om_1} \bigg(\bigg(\sum_{j=2}^k U_{j,k,t_k}\bigg)^2 + 2 U_{1,k,t_k} \sum_{j=2}^k U_{j,k,t_k}
-2UU_{1,k,t_k}-2U\sum_{j=2}^kU_{j,k,t_k}\bigg)\phi_k^2 dx\,.
\end{aligned}
\end{equation}

Having this expression at hand, to prove \eqref{E.Step2LT}, we deal with the ``remainder'' terms separately. Since
\begin{align*}
    k\|\phi_k\|_{L^4(\Omega_1)}^4=\|\phi_k\|^4_{L^4}\ls \|\phi_k\|_{\Hud}^4=k^2\,,
\end{align*}
we obtain that
\begin{equation} \label{E.phikSectorL4}
    \|\phi_k\|_{L^4(\Omega_1)}\ls k^\frac{1}{4}\,,
\end{equation}
which in turn yields
\begin{align*}
    \int_{\Omega_1}\bigg(\sum_{j=2}^k U_{j,k,t_k}\bigg)^2\phi_k^2\, dx\ls \|\phi_k\|^2_{L^4(\Omega_1)}\bigg(\int_{\Omega_1}\bigg(\sum_{j=2}^k U_{j,k,t_k}\bigg)^4 dx \bigg)^\frac{1}{2} \ls \bigg(k\int_{\Omega_1}\bigg(\sum_{j=2}^k U_{j,k,t_k}\bigg)^4 dx\bigg)^\frac{1}{2} \ls \frac{1}{\log^2(k)}\,. 
\end{align*}
Note that the last inequality follows from Lemma \ref{L.PremoselliVetois} applied with $\alpha=0$ and $\beta=4.$

Likewise, we have that
\begin{align*}
& \int_{\Om_1} U_{1,k,t_k} \sum_{j=2}^k U_{j,k,t_k} \phi_k^2 dx \leq \|\phi_k\|_{L^4(\Om_1)}^2 \bigg( \int_{\Om_1} U_{1,k,t_k}^2 \bigg(\sum_{j=2}^k U_{j,k,t_k} \bigg)^2 dx \bigg)^{\frac12} \\
& \quad \lesssim \bigg( k \int_{\Om_1} U_{1,k,t_k}^2 \bigg(\sum_{j=2}^k U_{j,k,t_k} \bigg)^2 dx \bigg)^{\frac12} \lesssim\frac{1}{\sqrt{k \log k}}\,,
\end{align*}
and that
\begin{align*}
    \int_{\Omega_1}U U_{1,k,t_k}\phi_k^2 dx \ls \|\phi_k\|^2_{L^4(\Omega_1)}\bigg( \int_{\Om_1} U^2 U_{1,k,t_k}^2 dx\bigg)^\frac{1}{2} \ls \left(k\int_{\Omega_1} U^2 U_{1,k,t_k}^2 dx\right)^\frac{1}{2}\ls \frac{1}{\sqrt{k\log k}},
\end{align*}
where we have applied Lemma \ref{L.PremoselliVetois} with $\alpha = \beta = 2$, and $\alpha=2$ and $\beta=0,$ respectively. 

Similarly, we infer that
\begin{align*}
    \int_{\Omega_1}U\sum_{j=2}^kU_{j,k,t_k}\phi_k^2 dx \ls  \|\phi\|^2_{L^4(\Omega_1)}\bigg(\int_{\Om_1} U^2\bigg(\sum_{j=2}^k U_{j,k,t_k}\bigg)^2 dx\bigg)^\frac{1}{2}
    \ls  \bigg(k\int_{\Omega_1}U^2\bigg(\sum_{j=2}^k U_{j,k,t_k}\bigg)^2 dx\bigg)^\frac{1}{2}\ls \frac{1}{\log k}\,.
\end{align*}
Note that the last inequality follows from Lemma \ref{L.PremoselliVetois} applied with $\alpha=0$ and $\beta=2$.

Finally, arguing as in the proof of Lemma \ref{L.PremoselliVetois} when dealing with $I_2$ for $\al = 4$ (there is a factor $\sqrt{\delta_k}$ here instead of $\delta_k$ there but otherwise the estimate is the same), we get that
\begin{align*}
& \int_{\Omega_1\setminus B_{\sqrt{t_k\de_k}}(\xi_{1,k,t_k})} U_{1,k,t_k}^2\phi_k^2\, dx \ls  \|\phi_k\|_{L^4(\Omega_1)}^2\left(\int_{\Omega_1\setminus B_{\sqrt{t_k\de_k}}(\xi_{1,k,t_k})} U_{1,k,t_k}^4 dx\right)^\frac{1}{2} \\
& \quad \ls  \bigg( k \int_{\Omega_1\setminus B_{\sqrt{t_k\de_k}}(\xi_{1,k,t_k})} U_{1,k,t_k}^4\bigg)^\frac{1}{2}\ls \frac{1}{\sqrt{k}\log k}\,.
\end{align*}
Substituting the estimates of the ``remainder'' terms into \eqref{E.Step2Expansion}, we get \eqref{E.Step2LT}, and Step 2 follows.

\medbreak
\noindent
\textbf{Step 3.} \textit{The rescaled and translated sequence}  $(\widetilde{\phi}_k)_{k}$ \textit{given by}
\begin{equation} \label{E.phikTilde}
    \widetilde{\phi}_k(y):=\sqrt{t_k\delta_k}\eta(\sqrt{t_k\delta_k} |y|)\phi_k(t_k\delta_k y +\xi_{1,k,t_k})\,, \quad \textup{\textit{for all }} k \in \NN\,,
\end{equation}
\textit{is bounded in }$\Hud$. \textit{Here, $\eta: [0,\infty) \to [0,1]$ is a smooth cutoff function such that $\eta \equiv 1$ in $[0,1)$ and $\eta \equiv 0$ in $[2,\infty)$.}

In order to bound the $\Hud-$norm of $\widetilde{\phi}_k$ we will use its harmonic extension to $\RR^3_+ := \RR^2\times (0,\infty)$. For all $k \in \NN$, we denote by $\Phi_k$ the harmonic extension to $\RR_+^3$ of $\phi_k$, and by $\widetilde{\Phi}_k$ the harmonic extension to $\RR_+^3$  of $\widetilde{\phi}_k$. Also, we introduce
$$
\widehat{\Phi}_k(Y) := \sqrt{t_k \de_k} \, \eta (\sqrt{t_k\de_k}|Y|) \Phi_k(t_k \de_k Y + \Xi_{1,k,t_k})\,, \quad \textup{for all } k \in \NN\,,
$$
where
$$
\Xi_{1,k,t_k}:= (\xi_{1,k,t_k},0)\,, \quad \textup{for all } k \in \NN\,.
$$

Now, observe that
\begin{align*}
    & \int_{\RR_+^3} |\nabla \widehat{\Phi}_k(Y)|^2 dY \\
    & \quad = \int_{\RR_+^3} t_k \de_k \bigg| t_k \de_k \nabla \Phi_k(t_k \de_k Y + \Xi_{1,k,t_k}) \eta(\sqrt{t_k \de_k}|Y|) + \sqrt{t_k \de_k} \Phi_k(t_k \de_k Y + \Xi_{1,k,t_k}) \eta'(\sqrt{t_k \de_k} |Y|) \frac{Y}{|Y|} \bigg|^2 dY \\
    & \quad \leq 2 \int_{\RR_+^3} (t_k \de_k)^3 \eta(\sqrt{t_k \de_k}|Y|)^2 |\nabla \Phi_k(t_k \de_k Y + \Xi_{1,k,t_k})|^2 dY \\
    & \qquad + 2 \int_{\RR_+^3} (t_k \de_k)^2 \eta'(\sqrt{t_k \de_k}|Y|)^2 \Phi_k (t_k \de_k Y + \Xi_{1,k,t_k})^2 dY =: \widehat{I}_1 + \widehat{I}_2\,. 
\end{align*}
On the one hand, by direct calculations
$$
\widehat{I}_1 = 2 \int_{B^+_{2\sqrt{t_k\de_k}}(\Xi_{1,k,t_k})} \eta\bigg(\frac{|X - \Xi_{1,k,t_k}|}{\sqrt{t_k\de_k}}\bigg)^2 |\nabla \Phi_k(X)|^2 dX \leq 2 \int_{\Om_1 \times (0,\infty)} |\nabla \Phi_k(X)|^2 dX = 2\,.
$$
Note that here and through the proof, we use the notation $B_R^+(\Xi):= B_R(\Xi)\cap \RR_+^3$. On the other hand, by Corollary \ref{C.improvedSobolev}, 
$$
\widehat{I}_2 = \frac{2}{t_k \de_k} \int_{B^+_{2\sqrt{t_k\de_k}}(\Xi_{1,k,t_k})} \eta'\bigg(\frac{|X - \Xi_{1,k,t_k}|}{\sqrt{t_k\de_k}}\bigg)^2 \Phi_k(X)^2 dX \leq  \frac{2\|\eta'\|_{L^{\infty}}}{t_k \de_k} \int_{B^+_{2\sqrt{t_k\de_k}}(\Xi_{1,k,t_k})}  \Phi_k(X)^2 dX \lesssim 1\,.
$$
Hence, it follows that
\begin{equation} \label{E.Step3LT1}
\|\nabla \widehat{\Phi}_k\|_{L^2(\RR_+^3)} \lesssim 1\,.
\end{equation}

Next, observe that
\begin{align*}
    \Delta \widehat{\Phi}_k(Y) = &\ (t_k \de_k)^{\frac32}  \Phi_k(t_k \de_k Y + \Xi_{1,k,t_k}) \Big( \eta''(\sqrt{t_k \de_k}|Y|) + \frac{2}{\sqrt{t_k \de_k}|Y|} \eta'(\sqrt{t_k \de_k} |Y|) \Big) \\
    & + (t_k \de_k)^{2} \eta'(\sqrt{t_k \de_k} |Y|)  \nabla \Phi_k(t_k \de_k Y + \Xi_{1,k,t_k}) \cdot \frac{Y}{|Y|} =: F(Y)\,,
\end{align*}
and so that
\begin{equation} \label{E.differenceExtension}
-\Delta (\widetilde{\Phi}_k - \widehat{\Phi}_k) = F \textup{ in } \RR_+^3\,, \qquad \widetilde{\Phi}_k - \widehat{\Phi}_k = 0 \textup{ on } \partial\RR_+^3 \equiv \RR^2\,.
\end{equation}
Hence, it follows that
\begin{equation}  \label{E.Step3LT2}
\|\nabla (\widetilde{\Phi}_k - \widehat{\Phi}_k)\|_{L^2(\RR_+^3)} \lesssim \|F\|_{L^{\frac65}(\RR_+^3)} 
\end{equation}

Finally, observe that
\begin{align*}
    & (t_k \de_k)^{\frac95} \int_{\RR_+^3} \Phi_k(t_k \de_k Y + \Xi_{1,k,t_k})^{\frac65}  \Big( \eta''(\sqrt{t_k \de_k}|Y|) + \frac{2}{\sqrt{t_k \de_k}|Y|} \eta'(\sqrt{t_k \de_k} |Y|) \Big)^{\frac65} dY \\
    & \quad = \frac{1}{(t_k\de_k)^{\frac65}} \int_{\RR_+^3} \Phi_k(X)^{\frac65}  \bigg( \eta''\bigg( \frac{|X-\Xi_{1,k,t_k}|}{\sqrt{t_k\de_k}}  \bigg) + \frac{2\sqrt{t_k\de_k}}{|X-\Xi_{1,k,t_k}|} \eta'\bigg( \frac{|X-\Xi_{1,k,t_k}|}{\sqrt{t_k\de_k}}  \bigg) \bigg)^{\frac65} dX \\
    & \quad \leq \bigg( \frac{1}{t_k\de_k} \int_{B^+_{2\sqrt{t_k \de_k}}(\Xi_{1,k,t_k})} \Phi_k(X)^2 dX \bigg)^{\frac35} \\
    & \qquad\ \times \bigg( \frac{1}{(t_k\de_k)^{\frac32}}  \int_{B^+_{2\sqrt{t_k \de_k}}(\Xi_{1,k,t_k})} \bigg( \eta''\bigg( \frac{|X-\Xi_{1,k,t_k}|}{\sqrt{t_k\de_k}}  \bigg) + \frac{2\sqrt{t_k\de_k}}{|X-\Xi_{1,k,t_k}|} \eta'\bigg( \frac{|X-\Xi_{1,k,t_k}|}{\sqrt{t_k\de_k}}  \bigg) \bigg)^3 dX \bigg)^{\frac25} \\
    & \quad =  \bigg( \frac{1}{t_k\de_k} \int_{B^+_{2\sqrt{t_k \de_k}}(\Xi_{1,k,t_k})} \Phi_k(X)^2 dX \bigg)^{\frac35} \bigg( \int_{B_2^{+}(0)} \Big( \eta''(|Z|) + \frac{2}{|Z|} \eta'(|Z|) \Big)^3 dZ \bigg)^{\frac25} \lesssim 1\,.
\end{align*}
Note that in the last step we have again used Corollary \ref{C.improvedSobolev}. Similarly, it follows that
\begin{align*}
    & (t_k \de_k)^{\frac{12}{5}} \int_{\RR_+^3} \eta'(\sqrt{t_k \de_k}|Y|)^{\frac65} |\nabla \Phi_k(t_k\de_k Y + \Xi_{1,k,t_k})|^{\frac65} dY  = \frac{1}{(t_k \de_k)^{\frac35}} \int_{\RR_+^3} |\nabla \Phi_k(X)|^{\frac65}  \eta'\bigg( \frac{|X-\Xi_{1,k,t_k}|}{\sqrt{t_k\de_k}}  \bigg)^{\frac65} dX  \\
    &  \quad\leq \bigg( \int_{B^+_{2\sqrt{t_k \de_k}}(\Xi_{1,k,t_k})} |\nabla\Phi_k(X)|^2 dX \bigg)^{\frac35} \bigg( \frac{1}{(t_k\de_k)^{\frac32}}  \int_{B^+_{2\sqrt{t_k \de_k}}(\Xi_{1,k,t_k})} \eta'\bigg( \frac{|X-\Xi_{1,k,t_k}|}{\sqrt{t_k\de_k}}  \bigg)^3 dX \bigg)^{\frac25} \\
    & \quad = \bigg( \int_{B^+_{2\sqrt{t_k \de_k}}(\Xi_{1,k,t_k})} |\nabla\Phi_k(X)|^2 dX \bigg)^{\frac35} \bigg( \int_{B_2^{+}(0)}  \eta'(|Z|)^3 dZ \bigg)^{\frac25}  \lesssim 1\,.
\end{align*}
Then, we have that
\begin{equation}  \label{E.Step3LT3}
\|F\|_{L^{\frac65}(\RR_+^3)} \lesssim 1\,.
\end{equation}

At this point, using that
$$
\|\widetilde{\phi}_k\|_{\Hud} = \|\nabla \widetilde{\Phi}_k\|_{L^2(\RR_+^3)}\,,
$$
the third step follows from \eqref{E.Step3LT1}, \eqref{E.Step3LT2} and \eqref{E.Step3LT3}.

\medbreak 
\noindent 
\textbf{Step 4.}\textit{ Up to a subsequence,} \textit{the sequence }$(\widetilde{\phi}_k)_{k}$
\textit{given in \eqref{E.phikTilde} satisfies}
\begin{align*}
    \widetilde{\phi}_k \rightharpoonup 0 \quad \text{\textit{in }$\dot{H}^{\frac{1}{2}}$} \quad \textup{\textit{and}} \quad
    \widetilde{\phi}_k \to 0 \quad \text{\textit{in} $L^p_{\text{loc}}$}\,,\ 2 \leq p < 4\,, \quad \textup{\textit{as} } k \to \infty\,.
\end{align*}

By Step 3, we know that $(\widetilde{\phi}_k)_k$ is a bounded sequence in $\Hud$. Hence, up to a subsequence, it follows that
$$
\widetilde{\phi}_k \rightharpoonup \widetilde{\phi}_\infty  \quad \textup{in }\dot{H}^{\frac{1}{2}} \quad \textup{and} \quad
    \widetilde{\phi}_k \to \widetilde{\phi}_\infty  \quad \textup{in }L^p_{\text{loc}}\,,\ 2 \leq p < 4\,, \quad \textup{as } k \to \infty\,,
$$
for some $\widetilde{\phi}_\infty \in \Hud$.

\medbreak
\noindent \textit{\underline{Step 4.1}.} $\widetilde{\phi}_{\infty} \in \Hud$ \textit{is a solution to}
$$
(-\Delta)^{\frac12} \widetilde{\phi}_{\infty} - 3 U^2 \widetilde{\phi}_{\infty} = 0 \quad \textup{\textit{in} } \RR^2\,.
$$

Let $\psi \in C_c^{\infty}(\RR^2)$ be fixed but arbitrary and let $R > 0$ be sufficiently large so that $\supp \psi \subset B_R(0)$. We denote by $\Psi$ its harmonic extension to $\RR_+^3$ and point out that
\begin{equation} \label{E.decayPsi}
|\Psi(Y)| \lesssim \frac{t}{|Y|^3} \,\|\psi\|_{L^1}
\quad \textup{and} \quad 
|\nabla \Psi(Y)| \lesssim \frac{1}{|Y|^3}\, \|\psi\|_{L^1}\,, \quad \textup{for } |Y| = |(y,t)| \geq 2R\,.
\end{equation}
Also, we introduce the notation
$$
\widetilde{U}_{k,t_k}(y) := \sqrt{t_k \de_k}\, U_{k,t_k}(t_k \de_k y + \xi_{1,k,t_k})\,.
$$

First, since $\supp \psi \subset B_R(0)$ and 
$$
\Big\| \frac{1}{\sqrt{t_k \de_k}} \psi \Big( \frac{\cdot - \xi_{1,k,t_k}}{t_k \de_k} \Big) \Big\|_{\Hud} = \|\psi\|_{\Hud}\,,
$$ 
we get 
\begin{equation} \label{E.psiktildepotential}
    \begin{aligned}
    & \int_{\RR^2} \widetilde{U}_{k,t_k}(y)^2 \widetilde{\phi}_k(y) \psi(y) dy = \int_{B_{R t_k \de_k}(\xi_{1,k,t_k})} U_{k,t_k}(x)^2 \phi_k(x) \frac{1}{\sqrt{t_k \de_k}} \psi \Big( \frac{x-\xi_{1,k,t_k}}{t_k \de_k} \Big) dx \\
    & \quad  =\int_{\Om_1} U_{k,t_k}(x)^2 \phi_k(x)  \frac{1}{\sqrt{t_k \de_k}} \psi \Big( \frac{x-\xi_{1,k,t_k}}{t_k \de_k} \Big) dx\,.
    \end{aligned}
\end{equation}

Next, note that
\begin{align*}
    \langle \widetilde{\phi}_k, \psi \rangle = \int_{\RR_+^3} \nabla \widetilde{\Phi}_k(Y) \cdot \nabla \Psi(Y) \, dY = \int_{\RR_+^3} \nabla \widehat{\Phi}_k(Y) \cdot \nabla \Psi(Y)\, dY + \int_{\RR^3_+} \nabla (\widetilde{\Phi}_k(Y) - \widehat{\Phi}_k(Y) ) \cdot \nabla \Psi(Y) \, dY\,.
\end{align*}
We analyze the two terms on the right hand side separately. Observe that the first one can be rewritten as
\begin{align*}
& \int_{\RR_+^3} \nabla \widehat{\Phi}_k(Y) \cdot \nabla \Psi(Y) \, dY = \int_{\RR_+^3} (t_k \de_k)^{\frac32}  \eta(\sqrt{t_k \de_k}|Y|) \nabla \Phi_k(t_k \de_k Y + \Xi_{1,k,t_k}) \cdot \nabla \Psi(Y)\, dY \\
& \qquad + \int_{\RR_+^3} t_k \de_k \Phi_k(t_k \de_k Y + \Xi_{1,k,t_k}) \eta'(\sqrt{t_k \de_k} |Y|) \frac{Y}{|Y|} \cdot \nabla \Psi(Y) \, dY =: \widehat{J}_1 + \widehat{J}_2\,.
\end{align*}
Hence, we treat $\widehat{J}_1$ and $\widehat{J}_2$ separately. First, we deal with $\widehat{J}_2$. Combining Corollary \ref{C.improvedSobolev} and \eqref{E.decayPsi}, we get
\begin{align*}
     |\widehat{J}_2| &\lesssim t_k \de_k \int_{B_{\frac{2}{\sqrt{t_k\de_k}}}^{+}(0) \setminus B_{\frac{1}{\sqrt{t_k\de_k}}}^{+}(0)} \frac{ \|\psi\|_{L^1}}{|Y|^3} |\Phi_k(t_k \de_k Y + \Xi_{1,k,t_k})| |\eta'(\sqrt{t_k \de_k}|Y|)| dY \\
    & \lesssim  \|\psi\|_{L^1} (t_k \de_k)^{\frac52} \int_{B_{\frac{2}{\sqrt{t_k\de_k}}}^{+}(0) \setminus B_{\frac{1}{\sqrt{t_k\de_k}}}^{+}(0)} |\Phi_k(t_k \de_k Y + \Xi_{1,k,t_k})| |\eta'(\sqrt{t_k \de_k}|Y|)| dY \\
    & \leq  \|\psi\|_{L^1} \frac{1}{\sqrt{t_k \de_k}} \int_{B_{2\sqrt{t_k\de_k}}^{+}(\Xi_{1,k,t_k})} |\Phi_k(X)| \bigg|\eta'\bigg( \frac{|X-\Xi_{1,k,t_k}|}{\sqrt{t_k\de_k}}  \bigg)\bigg| dX \\
    &\leq  \|\psi\|_{L^1} \bigg( \frac{1}{t_k \de_k} \int_{B_{2\sqrt{t_k\de_k}}^{+}(\Xi_{1,k,t_k})} \Phi_k(X)^2 dX \bigg)^{\frac12} \bigg( \int_{B_{2\sqrt{t_k\de_k}}^{+}(\Xi_{1,k,t_k})} \eta'\bigg( \frac{|X-\Xi_{1,k,t_k}|}{\sqrt{t_k\de_k}}  \bigg)^2 dX \bigg)^{\frac12} \\
    &  =  \|\psi\|_{L^1} (t_k \de_k)^{\frac34} \bigg( \frac{1}{t_k \de_k} \int_{B_{2\sqrt{t_k\de_k}}^{+}(\Xi_{1,k,t_k})} \Phi_k(X)^2 dX \bigg)^{\frac12} \bigg( \int_{B_2^{+}(0)} \eta'(|Z|)^2 dZ \bigg)^{\frac12} \lesssim \frac{ \|\psi\|_{L^1}}{(k \log k)^{\frac32}}\,.
\end{align*}
Now, to deal with $\widehat{J}_1$, let us start by pointing out that
\begin{align*}
    \widehat{J}_1 = &\  \int_{B_{\sqrt{t_k\de_k}}^{+}(\Xi_{1,k,t_k})} \nabla \Phi_k(X) \cdot \frac{1}{(t_k \de_k)^{\frac32}} \nabla \Psi \bigg( \frac{X-\Xi_{1,k,t_k}}{t_k \de_k} \bigg) dX \\
    & + (t_k \de_k)^{\frac32}  \int_{B_{\frac{2}{\sqrt{t_k\de_k}}}^{+}(0) \setminus B_{\frac{1}{\sqrt{t_k\de_k}}}^{+}(0)} \eta( \sqrt{t_k \de_k}|Y|) \nabla \Phi_k(t_k \de_k Y + \Xi_{1,k,t_k} ) \cdot \nabla \Psi(Y)\, dY\,.
\end{align*}
Arguing exactly as we did to handle $\widehat{J}_2$, we get that
$$
\bigg| (t_k \de_k)^{\frac32}  \int_{B_{\frac{2}{\sqrt{t_k\de_k}}}^{+}(0) \setminus B_{\frac{1}{\sqrt{t_k\de_k}}}^{+}(0)} \eta( \sqrt{t_k \de_k}|Y|) \nabla \Phi_k(t_k \de_k Y + \Xi_{1,k,t_k} ) \cdot \nabla \Psi(Y)\, dY \bigg| \lesssim \frac{ \|\psi\|_{L^1}}{(k \log k)^{\frac32}}\,.
$$
Likewise, it follows that
\begin{align*}
    & \int_{\Om_1 \times (0,\infty) \setminus B_{\sqrt{t_k\de_k}}^{+}(\Xi_{1,k,t_k})}   \nabla \Phi_k(X) \cdot \frac{1}{(t_k \de_k)^{\frac32}} \nabla \Psi \bigg( \frac{X-\Xi_{1,k,t_k}}{t_k \de_k} \bigg) dX \\
    & \quad \leq \bigg( \int_{\Om_1 \times (0,\infty)} |\nabla \Phi_k(X)|^2 dX \bigg)^{\frac12} \bigg( \int_{\RR_+^3 \setminus B_{\frac{1}{\sqrt{t_k\de_k}}}^{+}(0) } |\nabla \Psi(Y)|^2 dY \bigg)^{\frac12}\lesssim \frac{ \|\psi\|_{L^1}}{(k \log k)^{\frac32}}\,.
\end{align*}
Hence, we conclude that 
\begin{equation} \label{E.bracketPsihat}
     \int_{\RR_+^3} \nabla \widehat{\Phi}_k(Y) \cdot \nabla \Psi(Y) \, dY = \int_{\Om_1 \times (0,\infty)}  \nabla \Phi_k(X) \cdot \frac{1}{(t_k \de_k)^{\frac32}} \nabla \Psi \bigg( \frac{X-\Xi_{1,k,t_k}}{t_k \de_k} \bigg) dX + O \bigg( \frac{\|\psi\|_{L^1}}{(k \log k)^{\frac32}} \bigg)\,. 
\end{equation}

Next, taking into account \eqref{E.differenceExtension} and using the notation $Y = (y,t)$, we get
$$
\int_{\RR^3_+} \nabla (\widetilde{\Phi}_k(Y) - \widehat{\Phi}_k(Y) ) \cdot \nabla \Psi(Y) \, dY = \int_{\RR_+^3} F(Y) \Psi(Y)\, dY - \int_{\RR^2} \psi(y) \partial_t(\widetilde{\Phi}_k(y,t) - \widehat{\Phi}_k(y,t))\big|_{t=0}\, dy\,. 
$$
Also, note that
$$
\widetilde{\Phi}_k(Y) - \widehat{\Phi}_k(Y)  = \frac{1}{4\pi} \int_{\RR_+^3} \bigg( \frac{1}{|Y-Z|} - \frac{1}{|Y-Z^*|} \bigg) F(Z) \, dZ\,, \quad \textup{where } Z^* := (z_1,z_2,-s)\,.
$$
Moreover, since 
$$
\supp \psi \subset B_R(0) \quad \textup{and} \quad \supp F \subset B_{\frac{2}{\sqrt{t_k\de_k}}}^{+}(0) \setminus B_{\frac{1}{\sqrt{t_k\de_k}}}^{+}(0)\,,$$
we can differentiate under the sign integral to get that
\begin{align*}
    & 4\pi \partial_t(\widetilde{\Phi}_k(y,t) - \widehat{\Phi}_k(y,t))\big|_{t=0} = \int_{\RR_+^3} \frac{2s}{(|y-z|^2+s^2)^{\frac32}} F(Z) \, dZ \\
    & \quad = \int_{B_{\frac{2}{\sqrt{t_k\de_k}}}^{+}(0) \setminus B_{\frac{1}{\sqrt{t_k\de_k}}}^{+}(0)} \frac{2s}{(|y-z|^2+s^2)^{\frac32}} F(Z) \, dZ\,, \quad \textup{for all } y \in \supp \psi\,.
\end{align*}
Here, we are using the notation $Y = (y,t)$ and $Z = (z,s)$. Moreover, observe that for $y \in \supp \psi$ and $Z \in \supp F$
$$
|y-z|^2+s^2 \geq |z|^2+s^2+|y|^2-2|y||z| \geq \frac{|z|^2}{2}+s^2 - |y|^2 \geq \frac{|Z|^2}{2}-|y|^2 \geq \frac{1}{4t_k\de_k} \quad \textup{and} \quad 0 < s \leq \frac{2}{\sqrt{t_k\de_k}}\,.
$$
Hence, it follows that
\begin{align*}
& \Big| \partial_t(\widetilde{\Phi}_k(y,t) - \widehat{\Phi}_k(y,t))\big|_{t=0} \Big| \lesssim t_k \de_k \int_{B_{\frac{2}{\sqrt{t_k\de_k}}}^{+}(0) \setminus B_{\frac{1}{\sqrt{t_k\de_k}}}^{+}(0)} |F(Z)| \, dZ \\
& \quad \lesssim (t_k\de_k)^{\frac52} \int_{B_{\frac{2}{\sqrt{t_k\de_k}}}^{+}(0) \setminus B_{\frac{1}{\sqrt{t_k\de_k}}}^{+}(0)} |\Phi_k(t_k \de_k Z + \Xi_{1,k,t_k})| \Big|\eta''(\sqrt{t_k \de_k}|Z|) + \frac{2}{\sqrt{t_k \de_k}|Z|} \eta'(\sqrt{t_k \de_k} |Z|) \Big| dZ \\
& \qquad + (t_k\de_k)^3  \int_{B_{\frac{2}{\sqrt{t_k\de_k}}}^{+}(0) \setminus B_{\frac{1}{\sqrt{t_k\de_k}}}^{+}(0)} |\eta'(\sqrt{t_k \de_k} |Z|)|| \nabla \Phi_k(t_k \de_k Z + \Xi_{1,k,t_k})| dZ\,.
\end{align*}
The two terms on the right hand side can be handled exactly as we did $\widehat{J}_1$ and $\widehat{J}_2$. Hence, we conclude that
\begin{equation*}
    \bigg|\int_{\RR^2} \psi(y) \partial_t(\widetilde{\Phi}_k(y,t) - \widehat{\Phi}_k(y,t))\big|_{t=0}\, dy\bigg| \lesssim \frac{\|\psi\|_{L^1}}{(k \log k)^{\frac32}}\,.
\end{equation*}
Similarly, taking \eqref{E.decayPsi} into account, it follows that
\begin{align*}
& \bigg| \int_{\RR_+^3} F(Y) \Psi(Y)\, dY\bigg| \lesssim \|\psi\|_{L^1} \int_{B_{\frac{2}{\sqrt{t_k\de_k}}}^{+}(0) \setminus B_{\frac{1}{\sqrt{t_k\de_k}}}^{+}(0)} \frac{t}{|Y|^3} |F(Y)| dY \\
& \quad \lesssim \|\psi\|_{L^1} t_k \de_k \int_{B_{\frac{2}{\sqrt{t_k\de_k}}}^{+}(0) \setminus B_{\frac{1}{\sqrt{t_k\de_k}}}^{+}(0)} |F(Z)| \, dZ\,,
\end{align*}
and so that
 
\begin{equation*}
     \bigg| \int_{\RR_+^3} F(Y) \Psi(Y)\, dY\bigg| \lesssim \frac{\|\psi\|_{L^1}}{(k \log k)^{\frac32}}\,.
\end{equation*}
Hence, we conclude that
\begin{equation} \label{E.bracketCommutator}
    \bigg|\int_{\RR^3_+} \nabla (\widetilde{\Phi}_k(Y) - \widehat{\Phi}_k(Y) ) \cdot \nabla \Psi(Y) \, dY \bigg| \lesssim \frac{\|\psi\|_{L^1}}{(k \log k)^{\frac32}}\,.
\end{equation}
Then, combining \eqref{E.bracketPsihat} and \eqref{E.bracketCommutator}, we conclude that
\begin{equation} \label{E.psiktildeH12}
    \langle \widetilde{\phi}_k, \psi \rangle =  \int_{\Om_1 \times (0,\infty)}  \nabla \Phi_k(X) \cdot \frac{1}{(t_k \de_k)^{\frac32}} \nabla \Psi \bigg( \frac{X-\Xi_{1,k,t_k}}{t_k \de_k} \bigg) dX + O \bigg( \frac{\|\psi\|_{L^1}}{(k \log k)^{\frac32}} \bigg)\,. 
\end{equation}

Moreover, using that $\phi_k$ is $k$-fold symmetric (see \eqref{E.kfold}), we get that
$$
\int_{\Om_1 \times (0,\infty)}  \nabla \Phi_k(X) \cdot \frac{1}{(t_k \de_k)^{\frac32}} \nabla \Psi \bigg( \frac{X-\Xi_{1,k,t_k}}{t_k \de_k} \bigg) dX = \int_{\RR_+^3} \nabla \Phi_k(X) \cdot \nabla\Psi_k^{\rm sym}(X)\,dX \,,
$$
and that
$$
\int_{\Om_1} U_{k,t_k}(x)^2 \phi_k(x)  \frac{1}{\sqrt{t_k \de_k}} \psi \Big( \frac{x-\xi_{1,k,t_k}}{t_k \de_k} \Big) dx = \int_{\RR^2} U_{k,t_k}(x)^2 \phi_k(x) \psi_k^{\rm sym}(x) dx\,,
$$
Here,
$$
\psi_k^{\rm sym}(x):= \frac{1}{k} \Sjk \frac{1}{\sqrt{t_k \de_k}}\, \psi \bigg( R_{\frac{2\pi(j-1)}{k}}^{-1} \bigg( \frac{x-\xi_{j,k,t_k}}{t_k \de_k}\bigg) \bigg)\,,
$$
and $\Psi_k^{\rm sym}$ denotes its harmonic extension to $\RR_+^3$. 

At this point, plugging \eqref{E.psiktildepotential} and \eqref{E.psiktildeH12} into \eqref{E.startingR2}, we get that
\begin{align*}
    \langle \widetilde{\phi}_k, \psi \rangle - 3 \int_{\RR^2} \widetilde{U}_{k,t_k}^2 \widetilde{\phi}_k \psi dx = \langle \phi_k, \psi_k^{\rm sym} \rangle - 3 \int_{\RR^2} U_{k,t_k}^2 \phi_k \psi_k^{\rm sym} dx = o (\sqrt{k} \|\psi_k^{\rm sym}\|_{\Hud}) +  O \bigg( \frac{\|\psi\|_{L^1}}{(k \log k)^{\frac32}} \bigg)\,. 
\end{align*}
Moreover, observe that
\begin{align*}
    & \|\psi_k^{\rm sym}\|_{\Hud}^2  = \|\nabla \Psi_{k}^{\rm sym}\|_{L^2(\RR_+^3)}^2 \\
    &\quad = \frac{1}{k} \|\nabla\Psi\|_{L^2(\RR_+^3)}^2 + \frac{1}{k^2} \Sjk \sum_{i \neq j} \int_{\RR_+^3} \frac{1}{(t_k \de_k)^3} \overline{R}_j \nabla \Psi \bigg( \overline{R}_j^{-1} \bigg( \frac{X-\Xi_{j,k,t_k}}{t_k \de_k} \bigg) \bigg) \cdot \overline{R}_i \nabla \Psi \bigg( \overline{R}_i^{-1} \bigg( \frac{X-\Xi_{i,k,t_k}}{t_k \de_k} \bigg) \bigg) dX\\
    & \quad =: \frac{1}{k} \|\psi\|_{\Hud}^2 + \frac{1}{k^2} \Sjk \sum_{i \neq j} I_{ij}\,.
\end{align*}
Note that we are using the shortened notation
$
\overline{R}_j := R_{\frac{2\pi(j-1)}{k}}, 
$
with $R_\theta$ as in Lemma \ref{L.QAcorollary}. Likewise, we set
$$
D_{ij} := \frac{|\Xi_{j,k,t_k}-\Xi_{i,k,t_k}|}{t_k\de_k}\,.
$$
Then, observe that
\begin{align*}
    |I_{ij}| & \leq \int_{\RR_+^3} |\nabla \Psi(Y)|\bigg|\nabla \Psi\bigg(\overline{R}_i^{-1} \overline{R}_j Y + \overline{R}_i^{-1} \frac{\Xi_{j,k,t_k}-\Xi_{i,k,t_k}}{t_k\de_k} \bigg)\bigg| dY \\
    & = \int_{B_{\frac{D_{ij}}{2}}^{+}(Y)}  |\nabla \Psi(Y)|\bigg|\nabla \Psi\bigg(\overline{R}_i^{-1} \overline{R}_j Y + \overline{R}_i^{-1} \frac{\Xi_{j,k,t_k}-\Xi_{i,k,t_k}}{t_k\de_k} \bigg)\bigg| dY \\
    & \quad + \int_{\RR_+^3 \setminus B_{\frac{D_{ij}}{2}}^{+}(Y)}  |\nabla \Psi(Y)|\bigg|\nabla \Psi\bigg(\overline{R}_i^{-1} \overline{R}_j Y + \overline{R}_i^{-1} \frac{\Xi_{j,k,t_k}-\Xi_{i,k,t_k}}{t_k\de_k} \bigg)\bigg| dY =: I_{ij}^1 + I_{ij}^2\,.
\end{align*}
We estimate the two terms on the right hand side separately. To deal with the first one, we stress that
$$
\bigg| \overline{R}_i^{-1} \overline{R}_j Y + \overline{R}_i^{-1} \frac{\Xi_{j,k,t_k}-\Xi_{i,k,t_k}}{t_k\de_k}\bigg| \geq D_{ij}-|Y| \geq \frac{D_{ij}}{2} \quad \textup{in } B_{\frac{D_{ij}}{2}}^{+}(Y)\,.
$$
Thus, using \eqref{E.decayPsi}, one can easily check that
$$
I_{ij}^1 \lesssim (t_k \de_k)^{\frac32} \, \frac{\|\nabla \Psi\|_{L^2(\RR_+^3)} \|\psi\|_{L^1}}{|\Xi_{j,k,t_k}- \Xi_{i,k,t_k}|^{\frac32}}\,.
$$
Similarly, it follows that
\begin{align*}
    I_{ij}^2 & \lesssim \bigg( \int_{\RR_+^3 \setminus B_{\frac{D_{ij}}{2}}^{+}(Y)} \bigg|\nabla \Psi\bigg(\overline{R}_i^{-1} \overline{R}_j Y + \overline{R}_i^{-1} \frac{\Xi_{j,k,t_k}-\Xi_{i,k,t_k}}{t_k\de_k} \bigg)\bigg|^2 dY \bigg)^{\frac12} \bigg( \int_{\RR_+^3 \setminus B_{\frac{D_{ij}}{2}}^{+}(Y)} |\nabla \Psi(Y)|^2 dY \bigg)^{\frac12} \\
    & \lesssim \frac{\|\psi\|_{L^1}}{D_{ij}^{\frac32}}  \bigg( \int_{\RR_+^3} \bigg|\nabla \Psi\bigg(\overline{R}_i^{-1} \overline{R}_j Y + \overline{R}_i^{-1} \frac{\Xi_{j,k,t_k}-\Xi_{i,k,t_k}}{t_k\de_k} \bigg)\bigg|^2 dY \bigg)^{\frac12} \lesssim (t_k \de_k)^{\frac32} \, \frac{\|\nabla \Psi\|_{L^2(\RR_+^3)} \|\psi\|_{L^1}}{|\Xi_{j,k,t_k}- \Xi_{i,k,t_k}|^{\frac32}}\,.
\end{align*}
Hence, it follows that
$$
|I_{ij}| \lesssim (t_k \de_k)^{\frac32} \, \frac{\|\nabla \Psi\|_{L^2(\RR_+^3)} \|\psi\|_{L^1}}{|\Xi_{j,k,t_k}- \Xi_{i,k,t_k}|^{\frac32}}\,,
$$
and so, by \cite[(3.73)]{PV19}, that
$$
\frac{1}{k^2} \Sjk \sum_{i\neq j} I_{ij} \lesssim \frac{\|\psi\|_{\Hud} \|\psi\|_{L^1}}{k^{\frac52} \log^3(k)}\,.
$$

Taking into account this estimate for $\psi_k^{\rm sym}$, we conclude that
$$
\langle \widetilde{ \phi}_k, \psi \rangle - 3 \int_{\RR^2} \widetilde{U}_{k,t_k}^2 \widetilde{\phi}_k \psi \, dx = o (\|\psi\|_{\Hud}) +  O \bigg( \frac{\|\psi\|_{L^1}}{(k \log k)^{\frac32}} \bigg)\,, \quad \textup{as } k \to \infty\,.
$$
Since $\widetilde{U}_{k,t_k} \to U$ in $C_{\rm loc}^0(\RR^2)$ as $k \to \infty$, sending $k \to \infty$ we conclude that
$$
\langle \widetilde{ \phi}_\infty, \psi \rangle - 3 \int_{\RR^2} U^2 \widetilde{\phi}_\infty \psi \, dx = 0\,,
$$
and thus the proof of Step 4.1.

\medbreak\noindent \textit{\underline{Step 4.2}.} $\widetilde{\phi}_{\infty} \equiv 0$.

By  \cite[Theorem 1.1]{DdPS13} and \cite[Lemma A.1]{CKW25} we know that $\widetilde{\phi}_\infty \in {\rm span}\{Z^{(0)}, Z^{(1)},Z^{(2)}\}$, where we recall that the functions $Z^{(\ell)}$, $\ell \in \{0,1,2\}$, were given \eqref{E.Z012}. Also, observe that
\begin{equation*}
\langle \phitl, Z^{(\ell)} \rangle = 3 \int_{\RR^2} U^2 Z^{(\ell)} \phitl\, dx \,, \quad \textup{for } \ell \in \{0,1,2\}\,.
\end{equation*}
To prove that $\phitl \equiv 0$ we will show that, for all $\ell \in \{0,1,2\}$, $\langle \phitl, \, Z^{(\ell)} \rangle = 0$. We first deal with $\ell = 2$. Using that $(\widetilde{\phi}_k)_k$ is even in $x_2$ for all $k$, we infer that $\phitl$ is even in $x_2.$ Thus, it is immediate to check that
$$
3 \int_{\RR^2} U^2 Z^{(2)} \phitl \, dx = 3 \int_{\RR^2} \underbrace{\frac{1}{1+|x|^2} \, \phitl(x)}_{\textup{Even in }x_2} \, \underbrace{\frac{x_2}{(1+|x|^2)^{\frac32}}}_{\textup{Odd in }  x_2} dx = 0\,,
$$
and so that
\begin{equation} \label{E.phitlZ2}
    \langle \phitl,\, Z^{(2)} \rangle = 0\,.
\end{equation}

Next, let move to $\ell=0$. Observe that $(\phi_k)_k \subset \cK_{k,t_k}^{\perp} \subset \cH_k$. Thus, it follows that
\begin{align*}
    0 & = \langle \phi_k, Z_{k,t_k} \rangle = 3 \int_{\RR^2} \bigg( \Sjk U_{j,k,t_k}^2 Z_{j,k,t_k} \bigg) \phi_k \, dx = 3k \int_{\RR^2} U_{1,k,t_k}^2 Z_{1,k,t_k} \phi_k\, dx \\
    & = 3k \int_{B_{\frac{1}{\sqrt{t_k \de_k}}}(0)} U(y)^2 Z^{(0)}(y) \widetilde{\phi}_k (y) \, dy + 3k \int_{\RR^2 \setminus \frac{1}{\sqrt{t_k\de_k}}(0)} U(y)^2 Z^{(0)}(y) \sqrt{t_k \de_k} \phi_k (t_k \de_k y + \xi_{1,k,t_k})\, dy\,.  
\end{align*}
Also, observe that
\begin{equation} \label{E.auxUZ0}
   \bigg| \int_{\RR^2 \setminus \frac{1}{\sqrt{t_k\de_k}}(0)} U(y)^2 Z^{(0)}(y) \sqrt{t_k \de_k} \phi_k (t_k \de_k y + \xi_{1,k,t_k})\, dy \bigg| \lesssim \|\phi_k\|_{L^4} \bigg( \int_{k\log k}^{\infty} \frac{d\rho}{\rho^3} \bigg)^{\frac34} \lesssim \frac{1}{k \log^{\frac32}(k)}\,.
\end{equation}
Sending $k \to \infty$ we conclude that
$$
3 \int_{\RR^2} U^2 Z^{(0)} \phitl \, dx = 0\,,
$$
and thus
\begin{equation} \label{E.phitlZ0}
    \langle \phitl,\, Z^{(0)} \rangle = 0\,.
\end{equation}
For later purposes, let us point out that, combining \eqref{E.auxUZ0} and \eqref{E.phitlZ0}, one can easily prove that
\begin{equation} \label{E.phitlZ0corollary}
 \int_{\RR^2} U(y)^2 Z^{(0)}(y) \sqrt{t_k \de_k} \phi_k(t_k\de_k y + \xi_{1,k,t_k}) dy = o(1)\,, \quad \textup{as } k \to \infty\,.
\end{equation}

Finally, consider $\ell=1$. As in \eqref{E.auxUZ0}, it follows that
$$
\bigg|\int_{\RR^2} U(y)^2 Z^{(1)}(y) (\eta(\sqrt{t_k \de_k}|y|)-1) \sqrt{t_k \de_k} \phi_k(t_k \de_k y + \xi_{1,k,t_k})\, dy\bigg| \lesssim \frac{1}{k^2 \log^{\frac52}(k)}\,.
$$
and so that
\begin{equation} \label{E.phitlZ11}
    \int_{\RR^2} U^2 Z^{(1)} \widetilde{\phi}_k\, dy = \int_{\RR^2} U(y)^2 Z^{(1)}(y) \sqrt{t_k \de_k} \phi_k(t_k\de_k y + \xi_{1,k,t_k}) \, dy + O \bigg( \frac{1}{k^2 \log^{\frac52}(k)}\bigg)\,, \quad \textup{as } k \to \infty\,.
\end{equation}
Henceforth, we focus on the first term of the right hand side. Since $\phi_k \subset \cK_{k,t_k}^{\perp} \subset \cH_k$ is invariant under the action of the Kelvin transform (see \eqref{E.kelvin}), it follows that
\begin{align*}
    & \int_{\RR^2} U(y)^2 Z^{(1)}(y)  \sqrt{t_k \de_k} \phi_k(t_k\de_k y + \xi_{1,k,t_k}) \, dy =  \int_{\RR^2} \frac{y_1}{(1+|y|^2)^{\frac52}} \sqrt{t_k \de_k} \phi_k(t_k \de_k y + \xi_{1,k,t_k})\, dy \\
    & \ = (t_k \de_k)^{\frac52} \int_{\RR^2} \frac{x_1-\sqrt{1-t_k^2\de_k^2}}{((t_k\de_k)^2+|x-\xi_{1,k,t_k}|^2)^{\frac52}} \phi_k(x)\, dx = (t_k \de_k)^{\frac52} \int_{\RR^2}  \frac{1}{|z|^3}  \frac{\frac{z_1}{|z|^2}-\sqrt{1-t_k^2\de_k^2}}{\big((t_k\de_k)^2+ \big|\frac{z}{|z|^2}-\xi_{1,k,t_k}\big|^2\big)^{\frac52}} \phi_k(z) \, dz \\
    & \  = (t_k \de_k)^{\frac52} \int_{\RR^2}  \frac{z_1-\sqrt{1-t_k^2\de_k^2} + (1-|z|^2) \sqrt{1-t_k^2\de_k^2}}{((t_k \de_k)^2+|z-\xi_{1,k,t_k}|^2)^{\frac52}}\, \phi_k(z)\, dz \\
    & \  = \int_{\RR^2} U(y)^2 Z^{(1)}(y) \sqrt{t_k \de_k} \phi_k(t_k\de_k y + \xi_{1,k,t_k}) \, dy + (t_k \de_k)^{\frac52} \sqrt{1-t_k^2\de_k^2} \int_{\RR^2} \frac{1-|z|^2}{((t_k \de_k)^2+|z-\xi_{1,k,t_k}|^2)^{\frac52}}\, \phi_k(z)\, dz\,,
\end{align*}
and so that
\begin{align*}
    0 & = (t_k \de_k)^{\frac52} \sqrt{1-t_k^2\de_k^2} \int_{\RR^2} \frac{1-|z|^2}{((t_k \de_k)^2+|z-\xi_{1,k,t_k}|^2)^{\frac52}}\, \phi_k(z)\, dz \\
    & = \frac{ \sqrt{1-t_k^2\de_k^2}}{t_k \de_k} \int_{\RR^2} \frac{1-|t_k\de_k y + \xi_{1,k,t_k}|^2}{(1+|y|^2)^{\frac52}} \sqrt{t_k\de_k} \phi_k(t_k \de_k y + \xi_{1,k,t_k})\, dy \\
    & = t_k \de_k \, \sqrt{1-t_k^2\de_k^2} \int_{\RR^2} \frac{1-|y|^2}{(1+|y|^2)^{\frac52}} \sqrt{t_k\de_k} \phi_k(t_k \de_k y + \xi_{1,k,t_k})\, dy \\
    & \quad - 2 (1-(t_k\de_k)^2) \int_{\RR^2} \frac{y_1}{(1+|y|^2)^{\frac52}} \sqrt{t_k\de_k} \phi_k(t_k \de_k y + \xi_{1,k,t_k})\, dy \\
    & = -2  t_k \de_k \, \sqrt{1-t_k^2\de_k^2} \int_{\RR^2} U(y)^2 Z^{(0)}(y) \sqrt{t_k\de_k} \phi_k(t_k \de_k y + \xi_{1,k,t_k})\, dy \\
    & \quad - 2 (1-(t_k\de_k)^2) \int_{\RR^2} U(y)^2 Z^{(1)}(y) \sqrt{t_k\de_k} \phi_k(t_k \de_k y + \xi_{1,k,t_k})\, dy.
\end{align*}
Sending $k \to \infty$, and taking \eqref{E.phitlZ0corollary} into account, we conclude that
$$
\int_{\RR^2} U(y)^2 Z^{(1)}(y) \sqrt{t_k \de_k} \phi_k(t_k\de_k y + \xi_{1,k,t_k}) dy = o(1)\,, \quad \textup{as } k \to \infty\,.
$$

Finally, using \eqref{E.phitlZ11}, we conclude that
$$
\int_{\RR^2} U^2 Z^{(1)} \phitl\, dx = 0\,,
$$
which in turn implies that
\begin{equation} \label{E.phitlZ1}
    \langle \phitl,\, Z^{(1)} \rangle = 0\,.
\end{equation}
Step 4.2 follows from \eqref{E.phitlZ2}, \eqref{E.phitlZ0} and \eqref{E.phitlZ1}. This also concludes the proof of Step 4. 

\medbreak
\noindent \textbf{Step 5.} \textit{Conclusion}. 

First of all, observe that
$$
 \int_{B_{\sqrt{t\de_k}}(\xi_{1,k,t_k})} U_{1,k,t_k}^2 \phi_k^2 dx = \int_{B_{\frac{1}{\sqrt{t_k \de_k}}}(0)} U(y)^2 \widetilde{\phi}_k(y)^2 dy\,.
$$
Hence, taking into account Step 4, we infer that
$$
\int_{B_{\sqrt{t\de_k}}(\xi_{1,k,t_k})} U_{1,k,t_k}^2 \phi_k^2 dx \to 0\,, \quad \textup{as } k \to \infty\,,
$$
and thus, by Step 2,
    \begin{equation} \label{E.Step5LTbeginning}
\int_{\Om_1} U_{k,t_k}^2 \phi_k^2 dx =  \int_{\Om_1} U^2 \phi_k^2 dx + o(1)\,, \quad \textup{as } k \to \infty\,.
\end{equation}

Now, note that
$$
\int_{\Om_1} U^2 \phi_k^2 dx = \int_{\RR^2} U^2 \Big( \frac{\phi_k}{\sqrt{k}} \Big)^2 dx\,. 
$$
Hence, if we prove that
\begin{equation} \label{E.Step5goal}
\int_{\RR^2} U^2 \Big( \frac{\phi_k}{\sqrt{k}} \Big)^2 dx = o(1)\,, \quad \textup{as } k \to \infty\,,
\end{equation}
\eqref{uphi} will be in contradiction with \eqref{E.Step5LTbeginning}, and the result will follow. The rest of the proof is devoted to showing that \eqref{E.Step5goal} holds. To that end, we will prove that, up to a subsequence, $k^{-\frac12} \phi_k \rightharpoonup 0$ in $\Hud$. 

First, by \eqref{E.startingR2},  
$$
\langle k^{-\frac12} \phi_k, \psi \rangle - 3 \int_{\RR^2} U_{k,t_k}^2 k^{-\frac12} \phi_k \psi\, dx = o(\|\psi\|_{\Hud})\,, \quad \textup{as } k \to \infty\,,
$$
uniformly in $\psi \in \cK_{k,t_k}^{\perp}$. Also, observe that, setting $\psi_k := \sqrt{k} \,\psi$ for all $k \in \NN$,
\begin{align*}
& \int_{\RR^2} (U_{k,t_k}^2 -U^2) k^{-\frac12} \phi_k \psi\, dx  = \int_{\Om_1} (U_{k,t_k}^2 - U^2) \phi_k \psi_k\, dx = \int_{\Om_1} \bigg( \bigg(\Sjk U_{j,k,t_k} \bigg)^2 - 2U \Sjk U_{j,k,t_k} \bigg) \phi_k \psi_k\, dx  \\
& \quad =  \int_{B_{\sqrt{t_k\de_k}}(\xi_{1,k,t_k})} U_{1,k,t_k}^2 \phi_k \psi_k\, dx + \int_{\Om_1 \setminus B_{\sqrt{t_k\de_k}}(\xi_{1,k,t_k})} U_{1,k,t_k}^2 \phi_k \psi_k\, dx \\
& \qquad + \int_{\Om_1} \bigg(\bigg(\sum_{j=2}^k U_{j,k,t_k}\bigg)^2 + 2 U_{1,k,t_k} \sum_{j=2}^k U_{j,k,t_k}
-2UU_{1,k,t_k}-2U\sum_{j=2}^kU_{j,k,t_k}\bigg)\phi_k \psi_k\, dx\,.
\end{align*}
Hence, arguing as in Step 2, we get that
$$
\int_{\RR^2} (U_{k,t_k}^2 -U^2) k^{-\frac12} \phi_k \psi\, dx =  \int_{B_{\sqrt{t_k\de_k}}(\xi_{1,k,t_k})} U_{1,k,t_k}^2 \phi_k \psi_k\, dx + O \bigg( \frac{\|\psi\|_{\Hud}}{\log k} \bigg)\,, \quad \textup{as } k \to \infty\,.
$$

On the other hand,  
$$
 \int_{B_{\sqrt{t_k\de_k}}(\xi_{1,k,t_k})} U_{1,k,t_k}^2 \phi_k \psi_k\, dx = \int_{B_{\frac{1}{\sqrt{t_k \de_k}}}(0)} U^2 \widetilde{\phi}_k \widetilde{\psi}_k\, dy\,,
$$
where, for all $k \in \NN$, $\widetilde{\phi}_k$ was defined in \eqref{E.phikTilde} and, similarly, we set
\begin{equation} \label{E.psikTilde}
    \widetilde{\psi}_k(y):=\sqrt{t_k\delta_k}\eta(\sqrt{t_k\delta_k} |y|)\psi_k(t_k\delta_k y +\xi_{1,k,t_k})\,, \quad \textup{for all } k \in \NN\,.
\end{equation}
By Steps 3 and 4, we know that $(\widetilde{\phi}_k)_k$ is bounded in $\Hud$ and that $\widetilde{\phi}_k \to 0$ a.e. in $\RR^2$. Furthermore, arguing exactly as in Step 3, we get that $(\widetilde{\psi}_k)_k$ is bounded in $\Hud$ and thus in $L^4$. Hence, we conclude that
$$
\int_{B_{\sqrt{t_k\de_k}}(\xi_{1,k,t_k})} U_{1,k,t_k}^2 \phi_k \psi_k\, dx = o(\|\psi\|_{\Hud})\,, \quad \textup{as } k \to \infty\,,
$$
and so that
\begin{equation*}
    \int_{\RR^2} (U_{k,t_k}^2 -U^2) k^{-\frac12} \phi_k \psi\, dx = o(\|\psi\|_{\Hud})\,, \quad \textup{as } k \to \infty\,.
\end{equation*}
This in turn implies that
\begin{equation} \label{E.phisqrtk-limitlinearized}
    \langle k^{-\frac12}\phi_k, \psi \rangle - 3 \int_{\RR^2} U^2 k^{-\frac12} \phi_k \psi\, dx = o(\|\psi\|_{\Hud})\,, \quad \textup{as } k \to \infty\,.
\end{equation}

Next, observe that $(k^{-\frac12}\phi_k)_k$ is bounded in $\Hud$. Hence, up to a subsequence, 
$$
k^{-\frac12} \phi_k \rightharpoonup \phi_\infty \quad \textup{in } \Hud \quad \textup{and} \quad k^{-\frac12} \phi_k \to \phi_\infty \quad \textup{in } L_{\rm loc}^p\,, \ 2 \leq p < 4\,, \quad \textup{as } k \to \infty\,,
$$
for some $\phi_{\infty} \in \Hud$. Moreover, using \eqref{E.phisqrtk-limitlinearized}, we get that $\phi_\infty$ is a solution to
$$
(-\Delta)^{\frac12} \phi_{\infty} - 3 U^2 \phi_{\infty} = 0 \quad \textup{in } \RR^2\,.
$$
Also, let us point out that, since $(\phi_k)_k \subset \cH_k$, $\phi_\infty$ is invariant under the action of the Kelvin transform and  radial. Hence, we have that
$$
3 \int_{\RR^2} U^2   Z^{(2)} \phi_\infty \, dx  = 3 \int_{\RR^2} \underbrace{\frac{1}{1+|x|^2} \, \phi_\infty(x)}_{\textup{Even in }x_2} \, \underbrace{\frac{x_2}{(1+|x|^2)^{\frac32}}}_{\textup{Odd in }  x_2} dx = 0 \,.
$$
Likewise, using that $\bK[U]=U$ and that $\bK[Z^{(0)}] = - Z^{(0)}$, we get that
\begin{align*}
 & \int_{\RR^2} U^2  Z^{(0)} \phi_\infty\, dx = \int_{\RR^2} \frac{1}{|z|^3} U\Big( \frac{z}{|z|^2} \Big)^2 Z^{(0)} \Big( \frac{z}{|z|^2} \Big) \phi_\infty\left(\frac{z}{|z|^2}\right) \, dz \\
 & \quad = \int_{\RR^2} (\bK[U](z))^2 \bK[Z^{(0)}](z) \bK[\phi_\infty](z) \, dz   = - \int_{\RR^2} U^2  Z^{(0)} \phi_\infty\, dz\,,
\end{align*}
and so that
$$
3 \int_{\RR^2} U^2   Z^{(0)} \phi_\infty \, dx = 0\,.
$$
Finally, let $\te \in (0,2\pi)$ be fixed but arbitrary. Then,
\begin{align*}
    & \int_{\RR^2} U^2 Z^{(1)} \phi_\infty\, dx = \int_{\RR^2} U^2 Z^{(1)}(\cR_{\theta} \cdot) \phi_\infty\, dx = \cos(\theta) \int_{\RR^2} U^2 Z^{(1)} \phi_\infty\, dx - \sin(\theta) \int_{\RR^2} U^2 Z^{(2)} \phi_\infty\, dx\,.
\end{align*}
Since we know that 
$$
\int_{\RR^2} U^2 Z^{(2)} \phi_{\infty}\, dx = 0\,,
$$
we conclude that
$$
3 \int_{\RR^2} U^2 Z^{(1)} \phi_{\infty}\, dx = 0\,. 
$$

In short, we have that
$$
\langle \phi_{\infty},\, Z^{(\ell)} \rangle = 3 \int_{\RR^2} U^2 Z^{(\ell)} \phi_{\infty}\, dx = 0\,, \quad \ell \in \{0,1,2\}\,,
$$
which in turn implies that $\phi_\infty \equiv 0$. This allows us to conclude that \eqref{E.Step5goal} holds, and so that
$$
\int_{\Om_1} U_{k,t_k}^2 \phi_k^2 dx = o(1)\,, \quad \textup{as } k \to \infty\,,
$$
reaching a contradiction with \eqref{uphi}. The proof is concluded. 
\end{proof}

\begin{corollary} \label{C.aprioriEstimate}
For every $0 < a < b < \infty$, let $k_1 \in \NN$ and $\cC_1 > 0$ be as in Lemma \ref{L.aprioriEstimate}. Then, for all $k \geq k_1$ and all $t \in [a,b]$, the inverse operator $(\cL_{k,t}^{\perp,*})^{-1}: \cK_{k,t}^{\perp} \to \cK_{k,t}^{\perp}$ exists and is continuous. In particular,
$$
\|(\cL_{k,t}^{\perp,*})^{-1}\phi\|_{\dot{H}^{\frac12}} \leq \cC_1 \|\phi\|_{\dot{H}^{\frac12}},  \quad \textup{for all } \phi \in \cK_{k,t}^{\perp}\,.
$$
\end{corollary}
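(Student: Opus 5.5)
The plan is to combine the a priori estimate of Lemma \ref{L.aprioriEstimate} with the Fredholm alternative. First, $\cL_{k,t}^{\perp,*}$ maps $\cK_{k,t}^{\perp}$ into itself, since $\Pi^{\perp}$ projects onto $\cK_{k,t}^{\perp}$ and $\cL_{k,t}^{*}$ maps $\cH_k$ into $\cH_k$. On $\cK_{k,t}^{\perp}$ we have $\Pi^\perp\phi=\phi$, so we can write
$$
\cL_{k,t}^{\perp,*}\phi = \phi - \Pi^{\perp}\mathcal{T}_{k,t}\phi\,, \qquad \mathcal{T}_{k,t}\phi := (-\Delta)^{-\frac12}(3U_{k,t}^2\phi)\,.
$$
This has the form identity minus $\Pi^\perp \mathcal{T}_{k,t}$.

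The main step, and the only one that needs real work, is to show that $\mathcal{T}_{k,t}:\cH_k\to\cH_k$ is compact for fixed $k$ and $t$. Take a bounded sequence $(\phi_n)$ in $\cH_k$. Up to a subsequence it converges weakly in $\Hud$, so it converges strongly in $L^2_{\rm loc}$ and is bounded in $L^4$. By \eqref{43H12}, it suffices to show that $U_{k,t}^2\phi_n$ converges strongly in $L^{\frac43}$. The key input is that the weight $U_{k,t}^2$ is bounded for fixed $k$ and decays like $|x|^{-2}$ at infinity. Split $\RR^2$ into $B_R$ and $\RR^2\setminus B_R$.
\begin{itemize}
\item[(a)] On $B_R$, H\"older gives $\|U_{k,t}^2(\phi_n-\phi)\|_{L^{4/3}(B_R)}\le \|U_{k,t}\|_{L^\infty}^2 |B_R|^{\frac14}\|\phi_n-\phi\|_{L^2(B_R)}\to0$.
\item[(b)] Outside $B_R$, H\"older gives $\|U_{k,t}^2(\phi_n-\phi)\|_{L^{4/3}(\RR^2\setminus B_R)}\lesssim \|U_{k,t}^2\|_{L^2(\RR^2\setminus B_R)}\|\phi_n-\phi\|_{L^4}$, and $\|U_{k,t}^2\|_{L^2(\RR^2\setminus B_R)}\lesssim R^{-1}\to0$ since $U_{k,t}^4\lesssim|x|^{-4}$.
\end{itemize}
Letting $n\to\infty$ and then $R\to\infty$ gives compactness. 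Composing with the bounded projection $\Pi^\perp$ keeps it compact.

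The remaining steps follow directly. Lemma \ref{L.aprioriEstimate} shows $\cL_{k,t}^{\perp,*}$ is injective on $\cK_{k,t}^{\perp}$ for $k\ge k_1$. The Fredholm alternative for identity minus a compact operator on the Hilbert space $\cK_{k,t}^{\perp}$ then gives surjectivity, so the inverse exists. For any $\phi\in\cK_{k,t}^{\perp}$, apply Lemma \ref{L.aprioriEstimate} to $(\cL_{k,t}^{\perp,*})^{-1}\phi$ to get the bound with constant $\cC_1$, which also gives continuity. The constant is uniform in $k\ge k_1$ and $t\in[a,b]$, because compactness is used only qualitatively and the constant comes entirely from the lemma.
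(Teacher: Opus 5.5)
Your proposal is correct and follows the paper's proof. Both write $\cL_{k,t}^{\perp,*}$ on $\cK_{k,t}^{\perp}$ as the identity minus the compact operator $\Pi^{\perp}(-\Delta)^{-\frac12}(3U_{k,t}^2\,\cdot)$, obtain surjectivity from the injectivity given by Lemma \ref{L.aprioriEstimate} via the Fredholm alternative, and read off the bound with constant $\cC_1$ from that same lemma. The only difference is that you prove compactness explicitly: you use local Rellich compactness on $B_R$ together with the $|x|^{-2}$ decay of $U_{k,t}^2$ to get strong $L^{\frac43}$ convergence of $U_{k,t}^2\phi_n$, and then the continuity of $(-\Delta)^{-\frac12}:L^{\frac43}\to\Hud$; the paper only sketches this step through the Kolmogorov--Riesz theorem.
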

\begin{proof} 
First of all, fix $k \geq k_1$ and $t \in [a,b]$, and observe that
$$
\cL_{k,t}^{\perp,*}\phi = \phi - \Pi^{\perp}(-\Delta)^{-\frac12} (3 U_{k,t}^2 \phi)\,, \quad \textup{for all } \phi \in \cK_{k,t}^{\perp}\,.
$$
Since $\Pi^{\perp}: \cH_k \subset \Hud \to \cK_{k,t}^{\perp}$ is continuous, if we show that
$$
\phi \mapsto (-\Delta)^{-\frac12}(3 U_{k,t}^2 \phi)\,,
$$
is compact in $\Hud$, then it follows that 
$$
\phi \mapsto \Pi^{\perp}(-\Delta)^{-\frac12} (3 U_{k,t}^2 \phi)\,,
$$
is compact in $\cK_{k,t}^{\perp}$, and so that $\cL_{k,t}^{\perp,*}:  \cK_{k,t}^{\perp} \to \cK_{k,t}^{\perp}$ is Fredholm of index zero. 

Assume this is indeed the case. Since by Lemma \ref{L.aprioriEstimate} we know that $\cL_{k,t}^{\perp,*}$ is an injective map, it then follows from the fact that it is Fredholm of index zero that it is also onto, and therefore bijective. Once we have the invertibility of $\cL_{k,t}^{\perp,*}$, the estimate for the inverse is again an immediate corollary of Lemma \ref{L.aprioriEstimate}. Hence, to conclude the proof, we are just missing to show that 
\begin{equation} \label{E.compactnessAim}
\phi \mapsto (-\Delta)^{-\frac12}(3 U_{k,t}^2 \phi)\,,\ \textup{ is compact in } \Hud\,.
\end{equation}

Taking into account the decay properties of $U_{k,t}$, and the fact that $U_{k,t}^2 \in L^2 \cap L^4$, one can easily prove that this operator is compact in $\Hud$. Indeed, let $(\phi_n)_{n=1}^\infty \subset \Hud$ be a bounded sequence. Using Kolmogorov-Riesz compactness theorem (see e.g. \cite{HH10}), one can check that
$$
\big((-\Delta)^{-\frac14}(3U_{k,t}^2 \phi_n)\big)_{n=1}^{\infty}\,,
$$
admits a strongly convergent subsequence in $L^2$. This implies that \eqref{E.compactnessAim} holds and concludes the proof.
\end{proof}

\subsection{Nonlinear theory} Now, using the error estimate we proved in Lemma \ref{L.error} and the linear theory we established in the previous subsection, we deal with the second equation in \eqref{E.phiSystem} and prove the following.

\begin{proposition}  \label{P.contraction}
Let $0 < a < b < \infty$ and let $k_1 \in \NN$ be as in Lemma \ref{L.aprioriEstimate}. There exist $k_2 \in \NN$ with $k_2 \geq k_1$ and $\cC_2 > 0$ such that, for all $k \geq k_2$ and all $t \in [a,b]$, there exists a unique solution $\phi = \phi_{k,t}$ to
\begin{equation} \label{E.nonlinearProjected}
\cL_{k,t}^{\perp,*} \phi - \cE_{k,t}^{\perp,*} - \cN_{k,t}^{\perp,*}(\phi) = 0\,, \quad \phi \in \cK_{k,t}^{\perp}\,,
\end{equation}
such that
\begin{equation} \label{E.nonlinearSize}
\|\phi_{k,t}\|_{\dot{H}^{\frac12}}^2 + \int_{\RR^2} U_{k,t}^2 \phi_{k,t}^2 dx \leq \frac{\cC_2}{\log^2(k)}\,.
\end{equation}
Moreover, the map $t \mapsto \phi_{k,t}$ is continuously differentiable. 
\end{proposition}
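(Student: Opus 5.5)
By Corollary \ref{C.aprioriEstimate}, for $k\geq k_1$ and $t\in[a,b]$, equation \eqref{E.nonlinearProjected} is equivalent to the fixed point problem
$$
\phi = T_{k,t}(\phi) := (\cL_{k,t}^{\perp,*})^{-1}\big(\cE_{k,t}^{\perp,*} + \cN_{k,t}^{\perp,*}(\phi)\big)\,, \qquad \phi \in \cK_{k,t}^{\perp}\,.
$$
We apply the Banach fixed point theorem in the closed ball $B_k := \{\phi \in \cK_{k,t}^{\perp} : \|\phi\|_{\Hud} \leq M/\log k\}$, with $M := 2\cC_1\cC_0$. Since $\Pi^\perp$ is an orthogonal projection, Lemma \ref{L.error} gives $\|\cE_{k,t}^{\perp,*}\|_{\Hud}\leq \cC_0/\log k$. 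For the nonlinear term we use $\|(-\Delta)^{-\frac12} f\|_{\Hud}\lesssim \|f\|_{L^{4/3}}$, Hölder, and the Sobolev embedding $\Hud\hookrightarrow L^4$:
$$
\|\cN_{k,t}^{*}(\phi_1)-\cN_{k,t}^{*}(\phi_2)\|_{\Hud} \lesssim \Big(\|U_{k,t}\|_{L^4}\big(\|\phi_1\|_{L^4}+\|\phi_2\|_{L^4}\big) + \|\phi_1\|_{L^4}^2+\|\phi_2\|_{L^4}^2\Big)\|\phi_1-\phi_2\|_{L^4}\,.
$$
Here is the one point needing care: $\|U_{k,t}\|_{L^4}$ is not small but grows like $k^{1/4}$. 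Because of the $k$-fold symmetry and the bound $\int_{\Om_1}U_{k,t}^4\lesssim 1$, which follows from Lemma \ref{L.PremoselliVetois}, we get $\|U_{k,t}\|_{L^4}^4 \lesssim k$.

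This growth is the main obstacle. In the ball $B_k$ the quadratic term has Lipschitz constant of order $k^{1/4}/\log k$, which does not tend to zero. To fix this I would exploit that $\phi$ is itself $k$-fold symmetric, so that all norms localize to the sector $\Om_1$. For symmetric $\phi$ we have $\|\phi\|_{L^4(\Om_1)}=k^{-1/4}\|\phi\|_{L^4}$, and $\cN_{k,t}(\phi)$ is symmetric as well. Then
$$
\|U_{k,t}\phi^2\|_{L^{4/3}} = k^{3/4}\|U_{k,t}\phi^2\|_{L^{4/3}(\Om_1)}\leq k^{3/4}\|U_{k,t}\|_{L^4(\Om_1)}\|\phi\|_{L^4(\Om_1)}^2\lesssim k^{1/4}\|\phi\|_{\Hud}^2\,.
$$
So the naive bound carries a genuine $k^{1/4}$ loss.

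To avoid this loss I would instead work with the weighted quantity appearing in \eqref{E.nonlinearSize}. Estimate
$$
\int U_{k,t}\phi^2\psi\leq \Big(\int U_{k,t}^2\phi^2\Big)^{1/2}\|\phi\|_{L^4}\|\psi\|_{L^4}\,,
$$
and run the contraction in the norm
$$
\triple{\phi}^2:=\|\phi\|_{\Hud}^2+\int U_{k,t}^2\phi^2\,.
$$
This requires a linear estimate for the weighted part of the form $\int U_{k,t}^2 \phi^2 \lesssim \|\cL_{k,t}^{\perp,*}\phi\|^2 + \ldots$. I expect to obtain it by testing the linear equation against $\phi$, as in Step 1 of Lemma \ref{L.aprioriEstimate}: this yields $3\int U_{k,t}^2\phi^2=\|\phi\|^2-\langle f,\phi\rangle$, hence $\int U_{k,t}^2\phi^2\lesssim\|\phi\|^2+\|f\|^2$. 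With this estimate the quadratic term is bounded by $\triple{\phi}\,\|\phi\|_{\Hud}\lesssim \log^{-2}k$, and the contraction constant is $O(1/\log k)$. Choosing $k_2$ large therefore makes $T_{k,t}$ a contraction of the ball $\{\triple{\phi}\leq M/\log k\}$ into itself, which gives existence, uniqueness and \eqref{E.nonlinearSize}.

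Finally, for the $C^1$ dependence on $t$ I would apply the implicit function theorem to
$$
G(t,\phi)=\phi-T_{k,t}(\phi)\,.
$$
The ingredients are as follows. First, $U_{k,t}$, $Z_{k,t}$ and $\bK[Z_{k,t}]$ depend smoothly on $t$ in $\Hud$ and $L^4$, and therefore so do $\Pi^\perp$ and $\cE_{k,t}^*$. Second, the $t$-dependence of the space $\cK_{k,t}^\perp$ is removed by composing with the isomorphism $\Pi^\perp_{t}\Pi^\perp_{t_0}$, which is close to the identity. Third, $D_\phi G=I-DT$ is invertible because $\|DT\|=O(1/\log k)<1$.
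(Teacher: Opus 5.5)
Your proposal is correct and follows essentially the paper's proof. The paper also runs the contraction on the set $\|\phi\|_{\Hud}^2+\int U_{k,t}^2\phi^2\le \cC/\log^2 k$, controls the weighted part of $\cF_{k,t}(\phi)$ by testing the projected equation against $\cF_{k,t}(\phi)$, and uses $\|U_{k,t}\phi^2\|_{L^{4/3}}\le \|U_{k,t}\phi\|_{L^2}\|\phi\|_{L^4}$ to avoid the $k^{1/4}$ loss. The remaining differences are cosmetic: the paper measures the contraction in the plain $\Hud$ norm on that set, and it cites \cite{RV11} for the $C^1$ dependence on $t$ where you sketch the implicit function theorem.
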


\begin{proof}
For every $\cC >0$, $k \in \NN$ with $k \geq 2$, and $a < t < b$, we set
    $$
    \cX_{k,t}(\cC) := \left\{ \phi \in \cK_{k,t}^{\perp} : \|\phi\|_{\dot{H}^{\frac12}}^2 + \int_{\RR^2} U_{k,t}^2 \phi^2 dx \leq \frac{\cC}{\log^2(k)} \right\}.
    $$
To show the existence of a unique solution to \eqref{E.nonlinearProjected} satisfying \eqref{E.nonlinearSize}, we will prove that if $k > 0$ and $\cC> 0$ are large enough, the operator
    $$
    \cF_{k,t}: \cK_{k,t}^{\perp} \to \cK_{k,t}^{\perp}\,, \qquad \phi \mapsto (\cL_{k,t}^{\perp,*})^{-1} \Big(\cE_{k,t}^{\perp,*} + \cN_{k,t}^{\perp,*}(\phi) \Big)\,,
    $$
has a unique fixed point in $\cX_{k,t}(\cC)$. To that end, we will use the contraction mapping theorem. 

First of all, by Corollary \ref{C.aprioriEstimate}, we know that
    $$
    \|\cF_{k,t}(\phi)\|_{\Hud} \leq 2 \cC_1 \big( \|\cE_{k,t}^{*}\|_{\Hud} + \|\cN_{k,t}^{*}(\phi)\|_{\Hud} \big),
    $$
and that
    \begin{align*}
    & \int_{\RR^2} U_{k,t}^2 (\cF_{k,t}(\phi))^2 dx  = \frac13 \langle \cF_{k,t}(\phi) - \cE_{k,t}^{*}- \cN_{k,t}^{*}(\phi), \cF_{k,t}(\phi) \rangle \\
    & \quad \leq \frac13 \Big( \|\cF_{k,t}(\phi)\|_{\Hud}^2 + \|\cF_{k,t}(\phi)\|_{\Hud} \big( \|\cE_{k,t}^{*}\|_{\Hud} + \|\cN_{k,t}^{*}(\phi)\|_{\Hud} \big) \Big) \\
    & \quad \leq \frac{2\cC_1(2\cC_1+1)}{3} \big( \|\cE_{k,t}^{*}\|_{\Hud} + \|\cN_{k,t}^{*}(\phi)\|_{\Hud} \big)^2,
    \end{align*}
for all $k \geq k_1$, all $t \in [a,b]$ and all $\phi \in \cK_{k,t}^{\perp}$.

Next, observe that
\begin{align*}
    & \|\cN_{k,t}^{*}(\phi)\|_{\Hud} \lesssim \|\cN_{k,t}(\phi)\|_{L^{\frac43}} = \|3 U_{k,t} \phi^2 + \phi^3\|_{L^{\frac43}} \leq 3\left( \int_{\RR^2} U_{k,t}^2 \phi^2 dx \right)^{\frac12} \|\phi\|_{L^4} + \|\phi\|_{L^4}^3 \\
    & \quad \lesssim \left( \int_{\RR^2} U_{k,t}^2 \phi^2 dx \right)^{\frac12} \|\phi\|_{\Hud} + \|\phi\|_{\Hud}^3 \leq \frac{\cC}{\log^2(k)} \left( 1 + \frac{\cC^{\frac12}}{\log k} \right),
\end{align*}
for all $k \geq k_1$, all $t \in [a,b]$ and all $\phi \in \cX_{k,t}(\cC)$. Let us stress that the implicit constants in the above chain of inequalities are independent of $k$, $t$ and $\cC$. Hence, taking into account Lemma \ref{L.error}, we get the existence of $k_2 \in \NN$ with $k_2 \geq k_1$, and $\cC_2 > 0$ such that
$$
\cF_{k,t}(\cX_{k,t}(\cC_2)) \subset \cX_{k,t}(\cC_2)\,, \quad \textup{for all } k \geq k_2 \textup{ and all } t \in [a,b]\,.
$$

Hence, to apply the contraction mapping theorem, it remains to show that,  for all $k \geq k_2$ and all $t \in [a,b]$, $\cF_{k,t}$ is a contraction mapping in $\cX_{k,t}(\cC_2)$. By Corollary \ref{C.aprioriEstimate} we know that
$$
\|\cF_{k,t}(\phi_1) - \cF_{k,t}(\phi_2)\|_{\Hud}\leq 2\cC_1 \|\cN_{k,t}^{*}(\phi_1) - \cN_{k,t}^{*}(\phi_2)\|_{\Hud}\,, \quad \textup{for all } \phi_1, \phi_2 \in \cX_{k,t}(\cC_2),\ k \geq k_2 \textup{ and } t \in [a,b]\,.
$$
Also, observe that
\begin{align*}
    & \|\cN_{k,t}^{*}(\phi_1) - \cN_{k,t}^{*}(\phi_2)\|_{\Hud} \lesssim \|\cN_{k,t}(\phi_1) - \cN_{k,t}(\phi_2)\|_{L^{\frac43}} \\
    & \quad  \leq 3 \|  U_{k,t} (\phi_1+\phi_2)(\phi_1-\phi_2)\|_{L^{\frac43}} + \| (\phi_1^2 + \phi_1\phi_2 + \phi_2^2)(\phi_1-\phi_2)\|_{L^{\frac43}} \\
    & \quad \lesssim \left( \int_{\RR^2} U_{k,t}^2 \phi_1^2 dx + \int_{\RR^2} U_{k,t}^2 \phi_2^2 dx \right)^{\frac12} \|\phi_1-\phi_2\|_{L^4} + \left(\|\phi_1\|_{L^4}^2 + \|\phi_2\|_{L^4}^2 \right)  \|\phi_1-\phi_2\|_{L^4} \\
    & \quad \lesssim  \bigg( \left( \int_{\RR^2} U_{k,t}^2 \phi_1^2 dx \right)^{\frac12}  + \left(  \int_{\RR^2} U_{k,t}^2 \phi_2^2 dx \right)^{\frac12} + \|\phi_1\|_{\Hud}^2 + \|\phi_2\|_{\Hud}^2  \bigg)  \|\phi_1-\phi_2\|_{\Hud} \\
    & \quad \leq \left( \frac{(2\cC_2)^{\frac12}}{\log k} + \frac{2 \cC_2}{\log^2(k)} \right) \|\phi_1-\phi_2\|_{\Hud}\,, 
\end{align*}
for all $\phi_1, \phi_2 \in \cX_{k,t}(\cC_2)$, $k \geq k_2$ and $t \in [a,b]$.

As before, the implicit constants in these chains of inequalities are independent of $k$, $t$ and $\cC_2$. Taking $k_2$ bigger if necessary we conclude that, for all $k \geq k_2$ and all $t \in [a,b]$, $\cF_{k,t}$ is indeed a contraction mapping in $\cX_{k,t}(\cC_2)$.

At this point, the contraction mapping theorem guarantees the existence of a unique solution $\phi_{k,t}$ to \eqref{E.nonlinearProjected} belonging to $\cX_{k,t}(\cC_2)$. Also, in this setting, the continuous differentiability of $t \mapsto \phi_{k,t}$ is by now standard (see for instance \cite{RV11}). 
\end{proof}

\subsection{Energy expansion} Having Proposition \ref{P.contraction} at hand, it remains to treat the first equation in \eqref{E.phiSystem}, which is one dimensional. To that end, a suitable energy expansion will be crucial. More precisely, we introduce the energy functional associated to the Yamabe equation
$$
I : \cH_k \to \RR\,, \qquad u \mapsto \frac12 \int_{\RR^2} |(-\Delta)^{\frac14} u|^2 dx - \frac14 \int_{\RR^2} u^4 dx\,,
$$
and devote this section to proving the following:

\begin{proposition} \label{P.energyExpansion}
Let $0 < a < b < \infty$ and let $k_2 \in \NN$ be as in Proposition \ref{P.contraction}. Also, for $k \in \NN$ with $k \geq k_2$, let $\phi_{k,t} \in \cK_{k,t}^{\perp}$ be as in Proposition \ref{P.contraction}. Then
\begin{equation} \label{E.energyExpansionFinal}
I(U_{k,t}+\phi_{k,t}) = \frac{k+1}{4} \|U\|_{L^4}^4 + \Big( \pi \sqrt{2t}  - t \Big) \frac{1}{\log k} (1+o(1))\,, \quad \textup{as } k \to \infty\,,
\end{equation}
uniformly in $t \in [a,b]$.
\end{proposition}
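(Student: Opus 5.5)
The plan is to reduce the expansion to an expansion of $I(U_{k,t})$ itself, and then to compute the interaction terms explicitly.

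\textbf{Step 1: removing $\phi_{k,t}$.} Write
$$
I(U_{k,t}+\phi_{k,t}) = I(U_{k,t}) + I'(U_{k,t})[\phi_{k,t}] + O\Big(\|\phi_{k,t}\|_{\Hud}^2 + \int_{\RR^2} U_{k,t}^2\phi_{k,t}^2\,dx + \|\phi_{k,t}\|_{\Hud}^4\Big).
$$
The first-order term is $I'(U_{k,t})[\phi_{k,t}] = -\int \cE_{k,t}\phi_{k,t}\,dx = -\langle \cE^*_{k,t},\phi_{k,t}\rangle$. By Lemma \ref{L.error} and \eqref{E.nonlinearSize}, this term is $O(\log^{-2} k)$. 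The quadratic and higher terms are also $O(\log^{-2}k)$, by \eqref{E.nonlinearSize} and the Sobolev embedding $\Hud\hookrightarrow L^4$. Hence $I(U_{k,t}+\phi_{k,t}) = I(U_{k,t}) + o(1/\log k)$, uniformly in $t\in[a,b]$.

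\textbf{Step 2: algebraic expansion of $I(U_{k,t})$.} Set $a:=\Sjk U_{j,k,t}$ and $b:=U$. Each bubble solves \eqref{E.Yamabe}, so $\langle U_{i,k,t}, w\rangle = \int U_{i,k,t}^3 w$ and $\|U_{j,k,t}\|_{\Hud}^2 = \|U\|_{L^4}^4$. Expanding $(a-b)^4$ and the quadratic form gives
$$
I(U_{k,t}) = \frac{k+1}{4}\|U\|_{L^4}^4 + \Sjk\int U_{j,k,t}^3U\,dx - \frac12\sum_{i\neq j}\int U_{i,k,t}^3U_{j,k,t}\,dx + \mathcal R_{k,t}.
$$
Here the $ab^3$ term from the quartic cancels exactly against the cross term $-\langle a,U\rangle=-\int U^3a$. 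Likewise, the pairwise terms $\tfrac12\sum_{i\ne j}\int U_{i,k,t}^3U_{j,k,t}$ from the quadratic part and $-\sum_{i\neq j}\int U_{i,k,t}^3U_{j,k,t}$ from the quartic part combine into the displayed $-\tfrac12$ sum. The remainder $\mathcal R_{k,t}$ collects the terms $a^2b^2$, the products $U_{i,k,t}^2U_{j,k,t}^2$, and the triple and quadruple products of distinct bubbles.

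I will show $\mathcal R_{k,t}=o(1/\log k)$. By $k$-fold symmetry, each integral over $\RR^2$ equals $k$ times the integral over $\Om_1$, which Lemma \ref{L.PremoselliVetois} controls. For example, the $a^2b^2$ term splits as in the proof of Lemma \ref{L.error}. The self-interaction part $U_{1,k,t}^2U^2$ uses $(\al,\be)=(2,0)$ and gives $k\de_k|\log\de_k|\sim 1/(k\log k)$. The cross part $U^2(\sum_{j\ge2}U_{j,k,t})^2$ uses $(\al,\be)=(0,2)$ and gives $k\de_k\cdot k\log k\cdot\log k\sim (\log k)^{-1}\cdot O(1/\log k)$ once the constants are tracked. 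The four-bubble terms are handled as in \eqref{E.error1}. I expect all of these to be $O(\log^{-2}k)$ or smaller, though a couple may require the refined splitting $\Om_{1,1},\Om_{1,2},\Om_{1,3}$.

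\textbf{Step 3: the two leading interactions.} For the bubble--background interaction, localize near $\xi_{j,k,t}$:
$$
\int U_{j,k,t}^3U\,dx = \sqrt{t\de_k}\,U(\xi_{j,k,t})\int_{\RR^2}U^3\,dx\,(1+o(1)) = \sqrt{t\de_k}\,\frac{2\pi}{\sqrt2}\,(1+o(1)),
$$
using $\int U^3 = 2\pi$ and $|\xi_{j,k,t}|\to1$. Summing over $j$ gives $\pi\sqrt{2t}/\log k\,(1+o(1))$.

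For the bubble--bubble interaction,
$$
\int U_{i,k,t}^3U_{j,k,t}\,dx = \frac{2\pi t\de_k}{|\xi_{i,k,t}-\xi_{j,k,t}|}(1+o(1)),
$$
and
$$
\sum_{i\neq j}\frac{1}{|\xi_{i,k,t}-\xi_{j,k,t}|} = k\sum_{j=2}^k\frac{1}{2\sin(\pi(j-1)/k)}\,(1+o(1)) = \frac{k^2\log k}{\pi}(1+o(1)).
$$
With $\de_k=(k\log k)^{-2}$, this gives $-\tfrac12\sum_{i\neq j}\int U_{i,k,t}^3U_{j,k,t} = -\frac{t}{\log k}(1+o(1))$, which yields \eqref{E.energyExpansionFinal}.

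\textbf{Main obstacle.} The hard part is making the two localizations in Step 3 precise, with $o(1)$ relative error uniform in $t$. The integral $\int U_{i,k,t}^3 U_{j,k,t}$ has non-negligible contributions away from $\xi_{i,k,t}$, because $U_{i,k,t}^3$ decays only like $|x|^{-3}$ in 2D. My plan is to split $\RR^2$ into $B_{r}(\xi_{i,k,t})$ and its complement, with $r=o(|\xi_i-\xi_j|)$ and $r\gg\de_k$. Inside, Taylor expand $U_{j,k,t}$. Outside, bound the integral by Lemma \ref{L.PremoselliVetois}-type estimates with $\al=3$, $\be=1$, checking that the outer contribution summed over $i\ne j$ is $o(1/\log k)$. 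The sum over $j$ is dominated by nearby bubbles with $|\xi_i-\xi_j|\sim 1/k$, so I will need $r\ll 1/k$, which is compatible with $\de_k\ll 1/k$.
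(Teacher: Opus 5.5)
Your proposal is correct and follows the paper's proof essentially step for step. It uses the same algebraic reduction of $I(U_{k,t})$ to $\frac{k+1}{4}\|U\|_{L^4}^4+\int U\sum_j U_{j,k,t}^3\,dx-\frac12\sum_{i\neq j}\int U_{i,k,t}^3U_{j,k,t}\,dx$ plus remainders controlled by Lemma \ref{L.PremoselliVetois}, the same localization of the two interaction terms around $\xi_{1,k,t}$, and the same removal of $\phi_{k,t}$ via Lemma \ref{L.error} and \eqref{E.nonlinearSize}; the paper simply takes the fixed radius $\eta/k$, and the contribution outside that ball is of lower order, so the far field is less of an obstacle than you anticipate. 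Two small corrections: the $(\alpha,\beta)=(0,2)$ term is bounded by $k\,\delta_k\,(k+\log^2 k)\lesssim \log^{-2}k$ (your product $k\delta_k\cdot k\log k\cdot\log k$ equals $1$), and your remainder $\mathcal R_{k,t}$ must also contain the off-diagonal term $\int U\big((\sum_j U_{j,k,t})^3-\sum_j U_{j,k,t}^3\big)\,dx$, which the same lemma bounds by $O(\log^{-3}k)$.
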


\begin{remark} \label{R.strictGlobalMaximum}
    The function $f: \RR \to \RR$ given by $f(t) := \pi \sqrt{2t}  - t$ has a global maximum at $t_{\rm max} := \pi^2/2$. This global maximum is non-degenerate ($f''(t_{\rm max}) = - \frac{1}{\pi^2} < 0$) and thus isolated. Actually, any function of the form $t \mapsto \mathbf{c}_1 \sqrt{t} - \mathbf{c}_2 t$ with $\mathbf{c}_1 > 0$ and $\mathbf{c}_2 > 0$ has a non-degenerate global maximum at $\mathbf{t}_{\rm max} := \mathbf{c}_1^2/(4 \mathbf{c}_2^2).$
\end{remark}

We start by expanding the energy of the ansatz $U_{k,t}$. This will give us the main order in \eqref{E.energyExpansionFinal}.

\begin{lemma} \label{L.energyExpansionAnsatz}
Let $0 < a < b < \infty$. Then
$$
I(U_{k,t}) = \frac{k+1}{4} \|U\|_{L^4}^4 + \Big( \pi \sqrt{2t}  - t \Big) \frac{1}{\log k} (1+o(1))\,, \quad \textup{as } k \to \infty\,,
$$
uniformly in $t \in [a,b]$.
\end{lemma}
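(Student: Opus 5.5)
We plan to expand $I(U_{k,t})$ directly, using that each bubble solves \eqref{E.Yamabe}. Set $A:=\sum_{j} U_{j,k,t}$. Then $\langle U_{i,k,t},U_{j,k,t}\rangle=\int U_{i,k,t}^3U_{j,k,t}\,dx$ and $\langle U_{j,k,t},U\rangle=\int U_{j,k,t}^3U\,dx=\int U^3U_{j,k,t}\,dx$. The quadratic part of the energy is therefore
\[
\tfrac12\|U_{k,t}\|_{\Hud}^2=\tfrac{k+1}{2}\|U\|_{L^4}^4+\tfrac12\sum_{i\neq j}\int U_{i,k,t}^3U_{j,k,t}\,dx-\sum_{j}\int U^3U_{j,k,t}\,dx .
\]
For the quartic part we expand $(A-U)^4=A^4-4A^3U+6A^2U^2-4AU^3+U^4$, and further $A^4=\sum_jU_{j,k,t}^4+4\sum_{i\neq j}U_{i,k,t}^3U_{j,k,t}+(\text{terms with at least two factors of each of two bubbles})$. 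Collecting terms gives
\[
I(U_{k,t})=\tfrac{k+1}{4}\|U\|_{L^4}^4-\tfrac12\,\Sigma_{bb}+\Sigma_{bU}+\mathcal R ,
\]
with $\Sigma_{bb}:=\sum_{i\ne j}\int U_{i,k,t}^3U_{j,k,t}\,dx$ and $\Sigma_{bU}:=\sum_j\int U_{j,k,t}^3U\,dx$. Here we use that $-\tfrac14(-4AU^3-4A^3U)$ contributes $2\Sigma_{bU}$ up to remainders, and that $A^3\approx\sum_j U_{j,k,t}^3$ in $\int A^3U$.

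The remainder $\mathcal R$ contains $\int A^2U^2$, the parts of $\int A^3U$ involving $U_{1,k,t}^2\sum_{j\ge2}U_{j,k,t}$ and similar mixed terms, and the terms of $\int A^4$ of the form $U_{1,k,t}^2(\sum_{j\geq2}U_{j,k,t})^2$ and $(\sum_{j\ge2}U_{j,k,t})^4$. We reduce each of these by $k$-fold symmetry to $k\int_{\Om_1}$ and bound it with Lemma \ref{L.PremoselliVetois}. For instance:
\begin{itemize}
\item the case $\al=\be=2$ gives $k\,\de_k^2(\log k)^3\cdot k+k\,\de_k^2(k\log k)^2|\log\de_k|\lesssim(\log k)^{-1}\cdot\frac{1}{\log k}$ up to constants;
\item the case $\al=2$, $\be=0$ (for $U^2U_{1,k,t}^2$) gives $k\,\de_k\,|\log \de_k|\lesssim \frac{1}{k\log k}$;
\item the case $\al=0$, $\be=2$ gives $\frac{1}{\log^2 k}$.
\end{itemize}
We therefore expect $\mathcal R=o(1/\log k)$, with the exact accounting of which terms to keep done exactly as in the proof of Lemma \ref{L.error}.

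For the main terms, we use the scaling $\int U_{\de,\xi}^3\,dx=\de^{1/2}\int U^3=2\pi\de^{1/2}$ and the concentration of $U_{j,k,t}^3$ at scale $t\de_k$.
\begin{itemize}
\item First, $\int U_{j,k,t}^3U\,dx=2\pi\sqrt{t\de_k}\,U(\xi_{j,k,t})(1+o(1))=2\pi\sqrt{t\de_k/2}\,(1+o(1))$. Summing over $j$ gives $\Sigma_{bU}=\pi\sqrt{2t}\,k\sqrt{\de_k}(1+o(1))=\frac{\pi\sqrt{2t}}{\log k}(1+o(1))$.
\item Second, for $i\ne j$ we have $\int U_{i,k,t}^3U_{j,k,t}\,dx\approx 2\pi\sqrt{t\de_k}\,U_{j,k,t}(\xi_{i,k,t})\approx 2\pi t\de_k/|\xi_{i,k,t}-\xi_{j,k,t}|$. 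Combined with $\sum_{j\ge2}|\xi_{1,k,t}-\xi_{j,k,t}|^{-1}=\frac{k}{\pi}\log k\,(1+o(1))$ (elementary, since $|\xi_1-\xi_j|\approx 2\sin(\pi(j-1)/k)$), this gives $\Sigma_{bb}=2t\,k^2\de_k\log k\,(1+o(1))=\frac{2t}{\log k}(1+o(1))$.
\end{itemize}
Together these yield the claimed $(\pi\sqrt{2t}-t)/\log k$, uniformly for $t\in[a,b]$.

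The main obstacle is justifying the pointwise replacement in $\Sigma_{bb}$ to relative precision $o(1)$. The function $\sum_{j\ge2}U_{j,k,t}$ is not approximately constant on the support of $U_{1,k,t}^3$ at the level of precision needed, since $U_{1,k,t}^3$ has heavy tails $\sim \de_k^{3/2}|x-\xi_1|^{-3}$ extending out to the scale $1/k$, where the neighbours live. I would split $\Om_1$ into $B_{\rho_k}(\xi_{1,k,t})$ with $\de_k\ll\rho_k\ll 1/k$ (say $\rho_k=\sqrt{\de_k/k}$) and its complement.
\begin{itemize}
\item On the ball, $\sum_{j\ge2}U_{j,k,t}(x)=\sum_{j\geq2}U_{j,k,t}(\xi_1)(1+O(k\rho_k))$, and $\int_{B_{\rho_k}}U_{1,k,t}^3=2\pi\sqrt{t\de_k}(1+O(\de_k/\rho_k))$.
\item On the complement, the bound $U_{1,k,t}^3\lesssim\de_k^{3/2}|x-\xi_1|^{-3}$ together with \eqref{E.sumklogabs} gives a contribution of order $k\,\de_k^{3/2}\sqrt{\de_k}\,k\log k/\rho_k\cdot$(log factors). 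This is $o(1/\log k)$ after multiplying by $k$.
\end{itemize}
The analogous (easier) splitting handles $\Sigma_{bU}$. The remaining care is the sharp constant in the lattice sum, which must be accurate to $1+o(1)$; the $O(k)$ correction to $\frac{k}{\pi}\log k$ is harmless.
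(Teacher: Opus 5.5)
Your proposal is correct and follows essentially the paper's proof. It uses the same exact expansion of $I(U_{k,t})$ via the bubble equations, the same reduction of the cross terms to $k\int_{\Om_1}$ estimated with Lemma \ref{L.PremoselliVetois}, and the same extraction of $\pi\sqrt{2t}/\log k$ and $-t/\log k$ from the concentration of $U_{j,k,t}^3$ and the lattice sum $\sum_{j\ge2}|\xi_{1,k,t}-\xi_{j,k,t}|^{-1}=\frac{k}{\pi}\log k\,(1+o(1))$; your inner radius $\rho_k=\sqrt{\de_k/k}$ is a harmless (not strictly necessary) variant of the paper's $\eta/k$.

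Two bookkeeping points need fixing. First, $\Sigma_{bb}=k\int_{\RR^2}U_{1,k,t}^3\sum_{j\ge2}U_{j,k,t}\,dx$ also has a part outside $\Om_1$, where \eqref{E.sumklogabs} is unavailable. By symmetry this part equals $k\int_{\Om_1}\sum_{j\ge2}U_{j,k,t}^3\sum_{i\ne j}U_{i,k,t}\,dx$, which is $o(1/\log k)$ by Lemma \ref{L.PremoselliVetois} with $(\al,\be)=(1,3),(0,4)$, exactly as in the paper. Second, your displayed complement-of-the-ball term $k\cdot\de_k^{2}\,k\log k/\rho_k\lesssim k^{-1/2}(\log k)^{-2}$ already contains the single symmetry factor $k$. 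It must not be multiplied by $k$ again, since that would give $k^{1/2}(\log k)^{-2}$, which is not $o(1/\log k)$.
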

 
\begin{proof}
First, using that $U$ and $U_{j,k,t}$ are solutions to \eqref{E.Yamabe}, we directly expand the energy functional as
\begin{align*}
 I(U_{k,t})& = \frac{1}{2}\int_{\mathbb{R}^2}\left|(-\Delta)^{\frac{1}{4}}U_{k,t}\right|^2 dx  -\frac{1}{4}\int_{\mathbb{R}^2 } U_{k,t}^4dx\\
 &=\frac{1}{2}\int_{\RR^2}U_{k,t}\Dud U_{k,t}dx-\frac{1}{4}\int_{\RR^2}U_{k,t}^4 dx\\
 &= \frac{1}{2}\int_{\RR^2}\bigg(U^4-U\Sjk U_{j,k,t}^3-U^3\Sjk U_{j,k,t}+\Sjk\sum_{i=1}^k U_{j,k,t}U^3_{i,k,t}-\frac{1}{2} U_{k,t}^4\bigg)dx\\
 &= -\frac{1}{4}\int_{\RR^2}\bigg(U_{k,t}^4-U^4-\Sjk U_{j,k,t}^4+4 U\Sjk U_{j,k,t}^3+4U^3\Sjk U_{j,k,t}-4\Sjk\sum_{i\neq j}U_{j,k,t}U_{i,j,t}\bigg)dx\\
 &\quad +\int_{\mathbb{R}^2}\bigg(-\frac{1}{4}U^4-\frac{1}{4}\Sjk U_{j,k,t}^4+U\Sjk U_{j,k,t}^3+ U^3\Sjk U_{j,k,t}-\Sjk\sum_{i\neq j}U_{j,k,t}^3U_{i,k,t}\bigg)dx\\
 & \quad +\int_{\RR^2}\bigg(\frac{1}{2}U^4-\frac{1}{2}U\Sjk U_{j,k,t}^3-\frac{1}{2} U^3\Sjk U_{j,k,t}+\frac{1}{2}\Sjk U_{j,k,t}^4+\frac{1}{2}\Sjk\sum_{i\neq j}U_{j,k,t}^3U_{i,k,t}\bigg)dx\\
 & =\frac{k+1}{4}\int_{\RR^2}U^4 dx\\
 & \quad -\frac{1}{4}\int_{\RR^2}\bigg(U_{k,t}^4-U^4-\Sjk U_{j,k,t}^4+4 U\Sjk U_{j,k,t}^3+4U^3\Sjk U_{j,k,t}-4\Sjk\sum_{i\neq j}U_{j,k,t}^3U_{i,j,t}\bigg)dx\\
 & \quad +\frac{1}{2}\int_{\RR^2}\bigg( U\Sjk U_{j,k,t}^3-\Sjk \sum_{i\neq j} U_{j,k,t}^3U_{i,k,t}+U^3\Sjk U_{j,k,t}\bigg)dx.
\end{align*}
Moreover, we further rewrite the second last integral as 
\begin{align*}
\int_{\RR^2}&\bigg(U_{k,t}^4-U^4-\Sjk U_{j,k,t}^4+4 U\Sjk U_{j,k,t}^3+4U^3\Sjk U_{j,k,t}-4\Sjk\sum_{i\neq j}U_{j,k,t}U_{i,j,t}\bigg)dx\\
& = \int_{\RR^2}\bigg(U_{k,t}^4-U^4-\bigg(\Sjk U_{j,k,t}\bigg)^4+4 U\bigg(\Sjk U_{j,k,t}\bigg)^3+4U^3\Sjk U_{j,k,t}\bigg)dx\\
& \quad + \int_{\RR^2}\bigg(\bigg(\Sjk U_{j,k,t}\bigg)^4-\Sjk U_{j,k,t}^4-4\Sjk\sum_{i\neq j}U_{j,k,t}^3U_{i,j,t}\bigg)dx\\
& \quad +4\int_{\RR^2}U\bigg(\bigg(\Sjk U_{j,k,t}\bigg)^3-\Sjk U_{j,k,t}^3\bigg)dx,
\end{align*}
and notice that
\begin{align*}
    &\int_{\RR^2}\bigg(U_{k,t}^4-U^4-\bigg(\Sjk U_{j,k,t}\bigg)^4+4 U\bigg(\Sjk U_{j,k,t}\bigg)^3+4U^3\Sjk U_{j,k,t}\bigg)dx = 6 \int_{\RR^2}U^2\bigg(\Sjk U_{j,k,t}\bigg)^2 dx,
\end{align*}
and that
$$
\int_{\RR^2} U^3 \Sjk U_{j,k,t} dx = \int_{\RR^2} \Dud U \Sjk U_{j,k,t} dx  = \int_{\RR^2} U \Sjk \Dud U_{j,k,t } = \int_{\RR^2} U \Sjk U_{j,k,t}^3 dx.
$$
Hence, it follows that
\begin{equation} \label{E.energyExpansion}
\begin{aligned}
I(U_{k,t})& =\frac{k+1}{4}\|U\|_{L^4}^4+\frac{1}{2}\int_{\RR^2}\bigg(2U\Sjk U^3_{j,k,t} -\Sjk \sum_{i\neq j}U_{j,k,t}^3 U_{i,k,t} \bigg) dx - \frac32 \int_{\RR^2}U^2\bigg(\Sjk U_{j,k,t}\bigg)^2 dx\\
& \quad - \frac{1}{4}\int_{\RR^2}\bigg(\bigg(\Sjk U_{j,k,t}\bigg)^4-\Sjk U_{j,k,t}^4-4\Sjk\sum_{i\neq j}U_{j,k,t}^3U_{i,k,t}\bigg)dx\\
& \quad + \int_{\RR^2}U\bigg(\bigg(\Sjk U_{j,k,t}\bigg)^3-\Sjk U_{j,k,t}^3\bigg)dx.
\end{aligned}   
\end{equation}

Having this expression of $I(U_{k,t})$ at hand, we can now prove the desired expansion. We first estimate the higher order terms. First, observe that
\begin{equation*}
    \int_{\RR^2} U^2 \bigg( \Sjk U_{j,k,t} \bigg)^2 dx = k \int_{\Om_1}  U^2 \bigg( \Sjk U_{j,k,t} \bigg)^2 dx \lesssim k \int_{\Om_1} U^2 U_{1,k,t}^2 dx + k \int_{\Om_1} U^2 \bigg( \sum_{j=2}^k U_{j,k,t} \bigg)^2 dx.
\end{equation*}
Then, applying Lemma \ref{L.PremoselliVetois} twice, first with $\al = 2$ and $\be = 0$, and then with $\al = 0 $ and $\be =2$, we get that
\begin{equation} \label{E.energyR1}
     \int_{\RR^2} U^2 \bigg( \Sjk U_{j,k,t} \bigg)^2 dx \lesssim k \de_k \bigg( \frac{\log k}{k} + |\log \de_k| \bigg) + k \de_k \bigg( k + \frac{k^2 \log^2(k)}{k^2} \bigg) \lesssim \frac{1}{\log^2(k)}\,.
\end{equation}

Now, by direct but tedious computations, one can check that
\begin{align*}
    \int_{\RR^2}& \bigg(\bigg(\Sjk U_{j,k,t}\bigg)^4-\Sjk U_{j,k,t}^4-4\Sjk\sum_{i\neq j}U_{j,k,t}^3U_{i,k,t}\bigg)dx \\
    & \lesssim k \int_{\Om_1} U_{1,k,t} \bigg( \sum_{j=2}^k U_{j,k,t} \bigg)^3 dx + k \int_{\Om_1} U_{1,k,t}^2 \bigg( \sum_{j=2}^k U_{j,k,t} \bigg)^2 dx + k \int_{\Om_1} \bigg( \sum_{j=2}^k U_{j,k,t} \bigg)^4 dx. 
\end{align*}
The three terms on the right hand side can be estimated with Lemma \ref{L.PremoselliVetois}. Indeed, for the first one, we apply the lemma with $\al = 1$ and $\be = 3$; for the second term, we choose $\al = \be = 2$; and for the last, we take $\al = 0$ and $\be = 4$. Then, we conclude that
\begin{equation} \label{E.energyR2}
     \int_{\RR^2} \bigg(\bigg(\Sjk U_{j,k,t}\bigg)^4-\Sjk U_{j,k,t}^4-4\Sjk\sum_{i\neq j}U_{j,k,t}^3U_{i,k,t}\bigg)dx \lesssim \frac{1}{\log^4(k)}.
\end{equation}

Similarly, it follows that
$$
\begin{aligned}
\int_{\RR^2} & U\bigg(\bigg(\Sjk U_{j,k,t}\bigg)^3-\Sjk U_{j,k,t}^3\bigg)dx \\
& \lesssim k \int_{\Om_1} U U_{1,k,t}^2 \bigg( \sum_{j=2}^k U_{j,k,t} \bigg)  dx + k \int_{\Om_1} U U_{1,k,t} \bigg( \sum_{j=2}^k U_{j,k,t} \bigg)^2  dx + k \int_{\Om_1} U  \bigg( \sum_{j=2}^k U_{j,k,t} \bigg)^3  dx.
\end{aligned}
$$
Using again Lemma \ref{L.PremoselliVetois} we can estimate the three terms on the right hand side and conclude that
\begin{equation} \label{E.energyR3}
  \int_{\RR^2} U\bigg(\bigg(\Sjk U_{j,k,t}\bigg)^3-\Sjk U_{j,k,t}^3\bigg)dx  \lesssim \frac{1}{\log^3(k)}.  
\end{equation}
Thus, plugging \eqref{E.energyR1}, \eqref{E.energyR2} and \eqref{E.energyR3} into \eqref{E.energyExpansion}, we infer that
\begin{equation} \label{E.energyExpansion2}
    \begin{aligned}
        I(U_{k,t}) = \frac{k+1}{4}\|U\|_{L^4}^4+\frac{1}{2}\int_{\RR^2}\bigg(2U\Sjk U^3_{j,k,t} -\Sjk \sum_{i\neq j}U_{j,k,t}^3 U_{i,k,t} \bigg) dx + O \bigg( \frac{1}{\log^2(k)} \bigg)\,, \quad \textup{as } k \to \infty,
    \end{aligned}
\end{equation}
uniformly in $t \in [a,b]$. 

Next, we fix $\eta \in (0,\tfrac12)$ independent of $k$ such that $B_{\frac{\eta}{k}}(\xi_{1,k,t}) \cap B_{\frac{\eta}{k}}(\xi_{j,k,t}) = \emptyset$ for all $j \in \{2, \ldots,k\}$. Then, we rewrite the second term in \eqref{E.energyExpansion2} as
\begin{align*}
    & \int_{\RR^2} U \Sjk U_{j,k,t}^3 dx  = k \int_{\Om_1} U \Sjk U_{j,k,t}^3 dx = k \int_{\Om_1} U U_{1,k,t}^3 dx + k \int_{\Om_1} U \sum_{j=2}^k U_{j,k,t}^3 \\
    & \ = k \int_{B_{\frac{\eta}{k}}(\xi_{1,k,t})} U U_{1,k,t}^3 dx + k \int_{(\Om_1 \setminus B_{\frac{\eta}{k}}(\xi_{1,k,t})) \cap B_2(0)} U U_{1,k,t}^3 dx + k \int_{\Om_1 \setminus B_2(0)} U U_{1,k,t}^3 dx + k  \int_{\Om_1} U \sum_{j=2}^k U_{j,k,t}^3 dx.
\end{align*}
Then, note that, as in the proof of \eqref{E.energyR3}, it follows that
$$
k \int_{\Om_1} U \sum_{j=2}^k U_{j,k,t}^3 dx \leq k \int_{\Om_1} U \bigg(\sum_{j=2}^k U_{j,k,t} \bigg)^3 dx \lesssim \frac{1}{\log^3(k)}.
$$
Likewise, observe that
$$
U_{1,k,t}(x) \leq  \frac{k}{\eta} \sqrt{t \de_k} \lesssim \frac{1}{\log k} \qquad \textup{in } \Om_1 \setminus B_{\frac{\eta}{k}}(\xi_{1,k,t}), 
$$
and that
$$
U_{1,k,t}(x) \leq \frac{2}{|x|} \sqrt{t \de_k} \lesssim \frac{1}{ |x| k \log k } \qquad \textup{in } \Om_1 \setminus B_2(0).
$$
Hence, it follows that
$$
k \int_{(\Om_1 \setminus B_{\frac{\eta}{k}}(\xi_{1,k,t})) \cap B_2(0)} U U_{1,k,t}^3 dx + k \int_{\Om_1 \setminus B_2(0)} U U_{1,k,t}^3 dx + k  \int_{\Om_1} U \sum_{j=2}^k U_{j,k,t}^3 dx \lesssim \frac{1}{\log^3(k)}. 
$$

On the other hand, we have that
\begin{align*}
    & k \int_{B_{\frac{\eta}{k}}(\xi_{1,k,t})} U U_{1,k,t}^3 dx = k \sqrt{t\de_k} \int_{B_{\frac{\eta}{k t \de_k}}(0)} \frac{1}{(1+|t \de_k y + \xi_{1,k,t}|^2)^{\frac12}} \frac{1}{(1+|y|^2)^{\frac32}} dy  \\
    & \quad =  k \frac{\sqrt{t \de_k}}{(1+|\xi_{1,k,t}|^2)^{\frac12}} \int_{B_{\frac{\eta}{k t \de_k }}(0)} \frac{1}{(1+|y|^2)^{\frac32}} dy \\
    & \qquad 
    + k \sqrt{t \de_k}   \int_{B_{\frac{\eta}{k t \de_k }}(0)} \bigg( \frac{1}{(1+|t \de_k y + \xi_{1,k,t}|^2)^{\frac12}}  - \frac{1}{(1+|\xi_{1,k,t}|^2)^{\frac12}} \bigg) \frac{1}{(1+|y|^2)^{\frac32}} dy.
\end{align*}
Now, using polar coordinates, we get that
\begin{equation} \label{E.integralMainOrder}
\int_{B_{\frac{\eta}{k t \de_k }}(0)} \frac{1}{(1+|y|^2)^{\frac32}} dy = 2\pi + O \bigg( \frac{1}{k \log^2(k)} \bigg), \quad \textup{as } k \to \infty, \quad \textup{uniformly in } t \in [a,b].
\end{equation}
Moreover, observe that
$$
\frac{1}{(1+|t \de_k y + \xi_{1,k,t}|^2)^{\frac12}}  - \frac{1}{(1+|\xi_{1,k,t}|^2)^{\frac12}} = O \big( \de_k^2 |y|^2 + \de_k |y| \big),  \quad  \textup{as } k \to \infty, \quad \textup{uniformly in } t \in [a,b], 
$$
and so that
$$
k \sqrt{t \de_k}   \int_{B_{\frac{\eta}{k t \de_k }}(0)} \bigg( \frac{1}{(1+|t \de_k y + \xi_{1,k,t}|^2)^{\frac12}}  - \frac{1}{(1+|\xi_{1,k,t}|^2)^{\frac12}} \bigg) \frac{1}{(1+|y|^2)^{\frac32}} dy \lesssim \frac{1}{k^2 \log^2(k)}.
$$

Combining all these estimates, we conclude that the second term in \eqref{E.energyExpansion2} can be expanded as
\begin{equation} \label{E.energyMain1}
\int_{\RR^2} U \Sjk U_{j,k,t}^3 dx = \frac{\pi}{\log k} \sqrt{2t} + O \bigg( \frac{1}{\log^3(k)} \bigg), \quad \textup{as } k \to \infty, \quad \textup{uniformly in } t \in [a,b].
\end{equation}

Finally, we deal with the third term in \eqref{E.energyExpansion2}. First, observe that
\begin{align*}
    & \Sjk \sum_{i \neq j} \int_{\RR^2} U_{j,k,t}^3 U_{i,k,t} = k \Sjk \sum_{i \neq j} \int_{\Om_1} U_{j,k,t}^3 U_{i,k,t} dx = k \sum_{i=2}^k \int_{\Om_1} U_{1,k,t}^3 U_{i,k,t} dx + k \sum_{j=2}^k \sum_{i\neq j} \int_{\Om_1} U_{j,k,t}^3 U_{i,k,t} dx  \\
    & \quad = k \sum_{i=2}^k \int_{\Om_1} U_{1,k,t}^3 U_{i,k,t} dx + k \int_{\Om_1} U_{1,k,t} \bigg( \sum_{j=2}^k U_{j,k,t}^3 \bigg) dx + k \sum_{j=2}^k \sum_{\substack{i=2\\i \neq j}}^k \int_{\Om_1} U_{j,k,t}^3 U_{i,k,t} dx.
\end{align*}
The last two terms can be directly handled using Lemma \ref{L.PremoselliVetois}. Indeed, observe that
\begin{align*}
    & k \int_{\Om_1} U_{1,k,t} \bigg( \sum_{j=2}^k U_{j,k,t}^3 \bigg) dx + k \sum_{j=2}^k \sum_{\substack{i=2\\i \neq j}}^k \int_{\Om_1} U_{j,k,t}^3 U_{i,k,t} dx \\
    & \quad \leq  k \int_{\Om_1} U_{1,k,t} \bigg( \sum_{j=2}^k U_{j,k,t} \bigg)^3 dx + k \int_{\Om_1} \bigg( \sum_{j=2}^k U_{j,k,t} \bigg)^4 dx. 
\end{align*}
Then, for the first term on the right hand side, we use Lemma \ref{L.PremoselliVetois} with $\alpha=1$ and $\beta=3$, and get that
\begin{align*}
    k \int_{\Om_1} U_{1,k,t} \bigg( \sum_{j=2}^k U_{j,k,t} \bigg)^3 dx\ls \frac{1}{k\log(k)}. 
\end{align*}
Similarly, by Lemma \ref{L.PremoselliVetois} with $\alpha=0$ and $\beta=4$, it follows that
\begin{align*}
k \int_{\Om_1} \bigg( \sum_{j=2}^k U_{j,k,t} \bigg)^4 dx\ls \frac{1}{\log^4(k)}. 
\end{align*}

Next, fixed $\eta \in (0,\tfrac12)$ as in the proof of \eqref{E.energyMain1}, we write
\begin{align*}
    k  \int_{\Om_1} U_{1,k,t}^3 \sum_{j=2}^k U_{j,k,t} dx = k \int_{B_{\frac{\eta}{k}}(\xi_{1,k,t})} U_{1,k,t}^3  \sum_{j=2}^k U_{j,k,t} dx + k \int_{\Om_1 \setminus B_{\frac{\eta}{k}}(\xi_{1,k,t}) } U_{1,k,t}^3  \sum_{j=2}^k U_{j,k,t} dx.
\end{align*}
On the one hand, combining \eqref{id-intklogk} and \eqref{E.I2}, \eqref{E.I3} with $\al = 3$ and $\be = 1$, we see that
$$
k \int_{\Om_1 \setminus B_{\frac{\eta}{k}}(\xi_{1,k,t}) } U_{1,k,t}^3  \sum_{j=2}^k U_{j,k,t} dx \lesssim k \de_k^2 (k \log k + 1) \lesssim \frac{1}{k^2 \log^3(k)}.
$$

\noindent On the other hand, observe that
\begin{align*}
    & k \int_{B_{\frac{\eta}{k}}(\xi_{1,k,t})} U_{1,k,t}^3  \sum_{j=2}^k U_{j,k,t} dx = k t \de_k \int_{B_{\frac{\eta}{kt\de_k}}(0)} \frac{1}{(1+|y|^2)^{\frac32}} \sum_{j=2}^k \frac{1}{(t^2 \de_k^2 + |t\de_k y + \xi_{1,k,t}-\xi_{j,k,t}|^2)^{\frac12}} dy \\
    & \quad = kt\de_k \bigg(  \int_{B_{\frac{\eta}{kt\de_k}}(0)} \frac{1}{(1+|y|^2)^{\frac32}} dy \bigg) \sum_{j=2}^k \frac{1}{|\xi_{1,k,t}-\xi_{j,k,t}|} \\
    & \qquad + k t \de_k \int_{B_{\frac{\eta}{kt\de_k}}(0)} \frac{1}{(1+|y|^2)^{\frac32}} \sum_{j=2}^k \bigg( \frac{1}{(t^2 \de_k^2 + |t\de_k y + \xi_{1,k,t}-\xi_{j,k,t}|^2)^{\frac12}}  - \frac{1}{|\xi_{1,k,t}-\xi_{j,k,t}|} \bigg) dy.
\end{align*}
Then, combining \eqref{E.integralMainOrder} with the following expansion (see, for instance, \cite[Appendix A]{MM19}):
$$
\sum_{j=2}^k \frac{1}{|\xi_{1,k,t}-\xi_{j,k,t}|} = \frac{k}{\pi} (\log k + o(\log k)), \quad \textup{as } k \to \infty,
$$
we get that
$$
kt\de_k \bigg(  \int_{B_{\frac{\eta}{kt\de_k}}(0)} \frac{1}{(1+|y|^2)^{\frac32}} dy \bigg) \sum_{j=2}^k \frac{1}{|\xi_{1,k,t}-\xi_{j,k,t}|} = \frac{2t}{\log k} (1+o(1)), \quad \textup{as } k \to \infty.
$$
Similarly, using that for $p > 1$ (see, for instance, \cite[(3.73)]{PV19}),
$$
\sum_{j=2}^k \frac{1}{|\xi_{1,k,t}-\xi_{j,k,t}|^p} \lesssim k^p, \quad \textup{as } k \to \infty,
$$
and arguing as in the proof of \eqref{E.energyMain1} we conclude that 
$$
k t \de_k \int_{B_{\frac{\eta}{kt\de_k}}(0)} \frac{1}{(1+|y|^2)^{\frac32}} \sum_{j=2}^k \bigg( \frac{1}{(t^2 \de_k^2 + |t\de_k y + \xi_{1,k,t}-\xi_{j,k,t}|^2)^{\frac12}}  - \frac{1}{|\xi_{1,k,t}-\xi_{j,k,t}|} \bigg) dy \lesssim \frac{1}{k \log^3(k)}. 
$$

Combining all these estimates, we conclude that
\begin{equation} \label{E.energyMain2}
    -\frac12  \Sjk \sum_{i \neq j} \int_{\RR^2} U_{j,k,t}^3 U_{i,k,t} = - \frac{t}{\log k} (1+ o(1)), \quad \textup{as } k \to \infty. 
\end{equation}
\noindent The result follows combining \eqref{E.energyExpansion2}, \eqref{E.energyMain1} and \eqref{E.energyMain2}. 
\end{proof}

Having this lemma at hand, we can now prove Proposition \ref{P.energyExpansion}.

\begin{proof}[Proof of Proposition \ref{P.energyExpansion}]
    First note that, by the fundamental theorem of calculus, 
    $$
    I(U_{k,t}+ \phi_{k,t}) - I(U_{k,t}) = \int_{0}^1 \frac{d}{ d s} I(U_{k,t} + s \phi_{k,t} )\, ds. 
    $$
    Also, using Lemma \ref{L.error} and Proposition \ref{P.contraction}, we get that
    \begin{equation} \label{E.erroragaistphi}
    \left| \frac{d}{ds} I(U_{k,t} + s \phi_{k,t} ) \Big|_{s=0} \right|  = \left| \int_{\RR^2} \cE_{k,t} \phi_{k,t} dx \right| \leq \|\cE_{k,t}\|_{\frac43} \|\phi_{k,t}\|_4 \lesssim \|\cE_{k,t}\|_{\frac43} \|\phi_{k,t}\|_{\dot{H}^{\frac12}} \lesssim \frac{1}{\log^2(k)}\,.
    \end{equation}
    Thus, using again the fundamental theorem of calculus and changing the order of integration, we infer that
    \begin{equation} \label{E.energyExpansion1minuss}
    \begin{aligned}
        & I(U_{k,t}+ \phi_{k,t}) - I(U_{k,t}) = \int_0^1 \left( \int_0^s \frac{d^2}{d\sigma^2} I(U_{k,t} + \sigma \phi_{k,t} ) \,d\sigma \right) ds +  \frac{d}{ds} I(U_{k,t} + s \phi_{k,t} ) \Big|_{s=0} \\
        & \quad = \int_{0}^1 (1-\sigma) \frac{d^2}{d \sigma^2}  I(U_{k,t} + \sigma \phi_{k,t} ) \, d\sigma +  \frac{d}{ds} I(U_{k,t} + s \phi_{k,t} ) \Big|_{s=0} \\
        & \quad  = \int_{0}^1 (1-\sigma) \frac{d^2}{d \sigma^2}  I(U_{k,t} + \sigma \phi_{k,t} ) \, d\sigma +  O \Big( \frac{1}{\log^2 (k)} \Big), \quad \textup{as } k \to \infty, \textup{ uniformly in } t \in [a,b]\,.
    \end{aligned}
    \end{equation}
    
    Next, using that $\phi_{k,t}$ is a solution to \eqref{E.nonlinearProjected}, we get that
    $$
    \frac{d^2}{d\sigma^2} I(U_{k,t} + \sigma \phi_{k,t}) = \int_{\RR^2} \big( \cE_{k,t} + \cN_{k,t}(\phi_{k,t}) \big) \phi_{k,t} \, dx - 3 \int_{\RR^2} \big( (U_{k,t}+\sigma\phi_{k,t})^2- U_{k,t}^2 \big) \phi_{k,t}^2\, dx \,.
    $$
    Then, arguing as in the proof of Proposition \ref{P.contraction} and in \eqref{E.erroragaistphi}, we can estimate each term on the right hand side and conclude that
    $$
    \left|  \frac{d^2}{d\sigma^2} I(U_{k,t} + s \phi_{k,t}) \right|  \lesssim \frac{1}{\log^2(k)}\,.
    $$
    The result follows from the expansion in \eqref{E.energyExpansion1minuss} and Lemma \ref{L.energyExpansionAnsatz}.
\end{proof}

\subsection{Proof of Theorem \ref{T.mainThmYamabe}}\label{sec:proof-main-result1} We now have all the necessary ingredients to prove Theorem \ref{T.mainThmYamabe}. Here, for $0 < a < b < \infty$, $t \in [a,b]$, $k_2 \in \NN$ as in Proposition \ref{P.contraction} and $k \in \NN$ with $k \geq k_2$, we let $\phi= \phi_{k,t}$ denote the unique solution to \eqref{E.nonlinearProjected} satisfying \eqref{E.nonlinearSize} provided by Proposition \ref{P.contraction}. 

\begin{proof}[Proof of Theorem \ref{T.mainThmYamabe}]
Let $f : \RR \to \RR$ be the one-dimensional function given in Remark \ref{R.strictGlobalMaximum}. We know that $f$ has a strict global maximum at $t_{\rm max} := \pi^2/2$. Hence, it follows from Proposition \ref{P.energyExpansion} that, for $k$ sufficiently large, the function
$
t \mapsto I(U_{k,t} + \phi_{k,t})
$
has a critical point $t_k > 0$ such that $t_k \to t_{\rm max}$ as $k \to \infty$. We then set
\begin{equation} \label{E.psikSolution}
\psi_k := U_{k,t_k} + \phi_{k,t_k}\,,
\end{equation} 
and prove that, for all $k$ sufficiently large, $\psi_k \in \Hud$ is the solution to \eqref{E.Yamabe} we were looking for. 

First, since $\phi_{k,t_k}$ is a solution to \eqref{E.nonlinearProjected}, we get the existence of $c_k \in \RR$ such that
\begin{equation} \label{E.th21conclusion}
I'(\psi_k) = c_k \langle Z_{k,t_k}, \cdot \rangle \,.
\end{equation}

Si no definimos el I de $\cH_k$ en $\RR$ esto no es cierto. Considerar adelantar la prueba de que $\pa_tU_{k,t}$ y $\pa_t\phi_{k,t}$ estan en $\cH_k$ en otra versión.

Hence, since $t_k$ is a critical point of the function $t \mapsto I(U_{k,t} + \phi_{k,t})$, it follows that
\begin{equation} \label{E.0equalprojection}
0 =  I'(\psi_k) \frac{d}{dt} (U_{k,t}+\phi_{k,t}) \Big|_{t = t_k}  = c_k  \Big\langle Z_{k,t_k},  \frac{d}{dt} (U_{k,t}+\phi_{k,t}) \Big|_{t = t_k} \Big\rangle\,.
\end{equation}

Now, observe that
$$
\frac{d}{dt} U_{k,t}(x) = \frac1t \left( Z_{k,t}(x) - \frac{t^{\frac52}\de_k^{\frac52}}{1-t^2\de_k^2} \Sjk \frac{(x-\xi_{j,k,t})\cdot \xi_{j,k,t}}{(|x-\xi_{j,k,t}|^2 + t^2 \de_k^2)^{\frac32}} \right).
$$
Thus, arguing as we did in the proof of Lemma \ref{L.aprioriEstimate} to estimate $\|Z_{k,t} + \bK[Z_{k,t}]\|_{\Hud}$, we get that 
\begin{align*}
    & t_k \Big\langle Z_{k,t_k}, \frac{d}{dt} U_{k,t}\Big|_{t=t_k} \Big\rangle =  \|Z_{k,t_k}\|_{\Hud}^2- \frac{t_k^{\frac52}\de_k^{\frac52}}{1-t_k^2\de_k^2} \Big \langle Z_{k,t_k}, \Sjk \frac{(x-\xi_{j,k,t_k})\cdot \xi_{j,k,t_k}}{(|x-\xi_{j,k,t}|^2 + t_k^2 \de_k^2)^{\frac32}} \Big \rangle  = \frac{k\pi}{4} +O\left(\frac{1}{\log k}\right)\,,
\end{align*}
as $k \to \infty$, and so that
\begin{equation} \label{E.ZktagainsddtUkt}
\Big\langle Z_{k,t_k}, \frac{d}{dt} U_{k,t}\Big|_{t=t_k} \Big\rangle = \frac{k}{2\pi} + o(1)\,, \quad \textup{as } k \to \infty\,.
\end{equation}

On the other hand, since $\phi_{k,t} \in \cK_{k,t}^{\perp}$, we have that
$$
0=\frac{d}{dt} \langle Z_{k,t}, \phi_{k,t}\rangle = \Big \langle \frac{d}{dt}Z_{k,t}, \phi_{k,t} \Big \rangle + \Big \langle Z_{k,t}, \frac{d}{dt} \phi_{k,t} \Big \rangle,
$$
and so that
$$
\Big \langle Z_{k,t_k}, \frac{d}{dt} \phi_{k,t} \Big|_{t = t_k} \Big \rangle = - \Big \langle \frac{d}{dt} Z_{k,t}\Big|_{t = t_k}, \phi_{k,t_k} \Big \rangle\,.
$$
Hence, we have that
$$
\left| \Big \langle Z_{k,t_k}, \frac{d}{dt} \phi_{k,t} \Big|_{t = t_k} \Big \rangle \right| \leq \|\phi_{k,t_k}\|_{\Hud} \left\| \frac{d}{dt} Z_{k,t} \Big|_{t = t_k} \right\|_{\Hud} \lesssim \frac{1}{\log k} \left\| \frac{d}{dt} Z_{k,t} \Big|_{t = t_k} \right\|_{\Hud} \,.
$$
Moreover, observe that
\begin{align*}
    & \left\| \frac{d}{dt} Z_{k,t} \Big|_{t = t_k} \right\|_{\Hud} \lesssim \sum_{j=1}^k \left\| \frac{d}{dt}(3U_{j,k,t}^2 Z^{(0)}_{t \de_k, \xi_{j,k,t}} )\Big|_{t = t_k} \right\|_{L^{\frac43}} \\
    & \quad = 3 \sum_{j=1}^k \left\| 2 U_{j,k,t_k} \Big( \frac{d}{dt} U_{j,k,t} \Big|_{t=t_k} \Big) Z^{(0)}_{t_k \de_k, \xi_{j,k,t_k}} + U_{j,k,t_k}^2 \Big( \frac{d}{dt}Z^{(0)}_{t \de_k, \xi_{j,k,t}}\Big|_{t=t_k} \Big) \right\|_{L^{\frac43}} \\
    & \quad  \leq 6 \sum_{j=1}^k \left\| U_{j,k,t_k} \Big(\frac{d}{dt} U_{j,k,t} \Big|_{t=t_k} \Big) Z^{(0)}_{t_k \de_k, \xi_{j,k,t_k}}\right\|_{L^{\frac43}} + 3 \sum_{j=1}^k \left\|U^2_{j,k,t_k} \Big( \frac{d}{dt}Z^{(0)}_{t \de_k, \xi_{j,k,t}}\Big|_{t=t_k} \Big) \right\|_{L^{\frac43}} \lesssim k \,,
\end{align*}
and so that
\begin{equation} \label{E.Zktagainsddtphikt}
    \left| \Big \langle Z_{k,t_k}, \frac{d}{dt} \phi_{k,t} \Big|_{t = t_k} \Big \rangle \right| \lesssim \frac{k}{\log k}\,.
\end{equation}

Gathering \eqref{E.0equalprojection}--\eqref{E.Zktagainsddtphikt}, we get that
$$
0 = c_k \left( \frac{k}{2\pi} + O \Big( \frac{k}{\log k} \Big) \right)\,,
$$
and so that, for $k$ sufficiently large, $c_k = 0$. The result follows combining this fact with \eqref{E.th21conclusion}.
\end{proof}

\section{Nonradial stationary solutions to the SQG equation. Proof of Theorem \ref{T.mainThmSQG}} \label{S.nonradial}

Having Theorem \ref{T.mainThmYamabe} at hand, we can now prove Theorem \ref{T.mainThmSQG}. To that end, we just have to prove some extra properties of the solutions to \eqref{E.Yamabe} we constructed in Theorem \ref{T.mainThmYamabe}. 

\subsection{The half-Yamabe solution is nonradial and smooth} 

We start by proving that the constructed solutions are nonradial, bounded and smooth. Since these results are of independent interest, we treat them in a separate subsection. 

\begin{proposition} \label{P.nonradial}
    Let $k_0 \in \NN$ and $\psi_k \in \dot{H}^{\frac12}$ be as in Theorem \ref{T.mainThmYamabe}. There exists $k_{\rm rad} \in \NN$ with $k_{\rm rad} \geq k_0$ such that, for all $k \geq k_{\rm rad}$, $\psi_k$ is nonradial. 
\end{proposition}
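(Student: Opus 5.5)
We argue by contradiction: if $\psi_k$ were radial, then testing it against a nonradial but $k$-fold symmetric object would have to give the same answer as testing it against that object's radial average. We show instead that $\psi_k$ concentrates mass near the bubble centers that a radial function cannot reproduce.

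Fix a radius $r_k:=t_k\de_k$ (the bubble scale) and compare averages of $\psi_k$ over two regions at the same distance from the origin:
\begin{itemize}
\item the ball $B_{r_k}(\xi_{1,k,t_k})$;
\item a ball $B_{r_k}(\zeta_k)$ with $|\zeta_k|=|\xi_{1,k,t_k}|$ and angle $\pi/k$, i.e.\ halfway between two consecutive vertices.
\end{itemize}
If $\psi_k$ were radial, the two averages $\frac{1}{|B_{r_k}|}\int\psi_k$ would coincide, since a rotation maps one ball onto the other.

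\textbf{Ansatz contribution.} We evaluate the contribution of $U_{k,t_k}=\sum_j U_{j,k,t_k}-U$ to the two averages.
\begin{itemize}
\item On $B_{r_k}(\xi_{1,k,t_k})$ we have $U_{1,k,t_k}\gtrsim (t_k\de_k)^{-1/2}\sim k\log k$.
\item At $\zeta_k$, every $U_{j,k,t_k}$ is bounded by $\sqrt{\de_k}\,k\sim 1/\log k$. Summing over $j$ using \eqref{id-intklogk}, we get $\sum_j U_{j,k,t_k}\lesssim \sqrt{\de_k}\,k\log k\lesssim 1$ there.
\item $U$ is bounded by $1$ on both balls.
\end{itemize}
So the difference of the ansatz averages is $\gtrsim k\log k$.

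\textbf{Perturbation contribution.} We must show the $\phi_{k,t_k}$-averages cannot compensate this gap. By Cauchy--Schwarz and Sobolev,
$$\frac{1}{|B_{r_k}|}\int_{B_{r_k}}|\phi|\le |B_{r_k}|^{-1/4}\|\phi\|_{L^4}\lesssim r_k^{-1/2}\|\phi\|_{\Hud}.$$
Since $\|\phi_{k,t_k}\|_{\Hud}\lesssim 1/\log k$ by \eqref{E.nonlinearSize}, this is $\lesssim k\log k\cdot(\log k)^{-1}=k$. That is $o(k\log k)$, so it cannot cancel the ansatz gap. (A cleaner variant is to rescale: $\widetilde\phi_k(y)=\sqrt{r_k}\,\phi(r_ky+\xi_1)$ has $\Hud$-norm $o(1)$, while the rescaled ansatz converges locally to $U$ near $0$. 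If we rotate by $\pi/k$ and rescale around $\zeta_k$ instead, the ansatz becomes $O(\sqrt{r_k})\to0$ locally. Local $L^2$ comparison on $B_1$ then gives the contradiction via $\|U\|_{L^2(B_1)}>0$.)

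\textbf{Conclusion.} For $k$ large the two averages of $\psi_k$ differ, so $\psi_k$ is not invariant under rotation by $\pi/k$, hence not radial.

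The main obstacle is bookkeeping the bounds on the ansatz at $\zeta_k$. We need $\sum_j U_{j,k,t_k}(\zeta_k)$ to be $o(k\log k)$ after rescaling, which follows from \eqref{id-intklogk} together with $\sqrt{\de_k}=(k\log k)^{-1}$. We also need $t_k$ to stay in $[a,b]$, which holds since $t_k\to\pi^2/2$.
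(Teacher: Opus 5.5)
Your argument is correct, but it follows a different route from the paper.

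\textbf{The paper's route.} The paper works with global $\dot{H}^{\frac12}$ norms. Radiality becomes the identity $\phi_k(\cR_\theta\,\cdot)-\phi_k=\sum_j\big(U_{j,k,t}-U_{j,k,t}(\cR_\theta\,\cdot)\big)$. The left side has norm $\lesssim(\log k)^{-1}$, by rotation invariance of the norm and Proposition~\ref{P.contraction}. For $\theta=\pi/k$ the right side has squared norm $\gtrsim k$. The paper proves this lower bound by expanding $\|\sum_j U_{j,k,t}\|_{\dot{H}^{\frac12}}^2=k\|U\|_{L^4}^4+(\text{interactions})$ through the bubble equation. It then bounds every interaction integral with Lemma~\ref{L.PremoselliVetois}, including those of the rotated configuration, treated as a $2k$-gon.

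\textbf{Your route.} You work locally at the single-bubble scale $r_k=t_k\de_k$. There the ansatz is of size $r_k^{-1/2}\sim k\log k$ near $\xi_{1,k,t_k}$ and $O(1)$ near $\zeta_k$. You control $\phi_{k,t_k}$ by the scale-invariant bound $|B_{r}|^{-1/4}\|\phi\|_{L^4}\lesssim r^{-1/2}\|\phi\|_{\dot{H}^{\frac12}}$.

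\textbf{What each buys.} Your argument needs only the elementary pointwise bound $\sum_j|x-\xi_{j,k,t}|^{-1}\lesssim k\log k$ and no interaction integrals, so it is shorter. The price is that it sees a single bubble. It therefore needs $\|\phi_k\|_{\dot{H}^{\frac12}}$ to lie below an absolute threshold, which holds here since it is $O((\log k)^{-1})$, and $o(1)$ would suffice. The paper's comparison would still close under the much weaker bound $\|\phi_k\|_{\dot{H}^{\frac12}}^2=o(k)$, because the rotated difference collects a contribution from each of the $k$ bubbles.

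\textbf{Two cosmetic corrections.}
\begin{itemize}
\item The averaging estimate is H\"older with exponents $4$ and $\frac43$, not Cauchy--Schwarz.
\item The point $\zeta_k$ lies on $\partial\Om_1$ and you need the full sum including $j=1$, so \eqref{id-intklogk} does not apply verbatim. The direct bound $|\zeta_k-\xi_{j,k,t_k}|\gtrsim\min\big(|2j-3|,\,2k-|2j-3|\big)/k$ gives the required $\sum_{j=1}^k|x-\xi_{j,k,t_k}|^{-1}\lesssim k\log k$ on $B_{r_k}(\zeta_k)$, using $r_k\ll 1/k$.
\end{itemize}
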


\begin{remark}
    In the local case \cite{dPMPP11}, the nonradiality of the constructed solutions immediately follows from the Pohozaev identity. Nevertheless, this is not the case for the half-Yamabe equation. Indeed, to prove that the solutions to \eqref{E.Yamabe} constructed in Theorem \ref{T.mainThmYamabe} are nonradial, we use a completely different approach based on subtle energy estimates. 
\end{remark}

\begin{proof}
We assume by contradiction that 
$
\psi_k = \Sjk U_{j,k,t} - U + \phi_k
$
is radial. Since $U$ is radial, this implies
\begin{equation} \label{E.phikUk}
\phi_k (\cR_\theta\, \cdot) - \phi_k = \Sjk \big(U_{j,k,t} - U_{j,k,t}(\cR_\theta \,\cdot) \big)\,,
\end{equation}
for every rotation matrix $\cR_\theta$ of angle $\theta \in [0,2\pi)$. 

On the one hand, taking into account Proposition \ref{P.contraction}, we get that
\begin{equation} \label{E.phikUpper}
    \|\phi_k(\cR_\theta\,\cdot)-\phi_k\|_{\Hud}^2\lesssim \frac{1}{\log^2(k)}, \quad \textup{for all } \theta \in [0,2\pi).
\end{equation}

On the other hand, choosing $\theta = \frac{\pi}{k}$, it follows that
\begin{align*}
    & \left\| \Sjk U_{j,k,t} - \Sjk U_{j,k,t} (\cR_{\frac{\pi}{k}}\, \cdot) \right\|_{\dot{H}^{\frac12}}^2 = \int_{\RR^2} \bigg|\Sjk (-\Delta)^\frac{1}{4}U_{j,k,t}-(-\Delta)^{\frac14} U_{j,k,t}(\cR_{\frac{\pi}{k}}\, \cdot)\bigg|^2 dx \\
    & \quad = \int_{\RR^2} \bigg| \Sjk(-\Delta)^{\frac14} U_{j,k,t}\bigg|^2 dx + \int_{\RR^2} \bigg| \Sjk(-\Delta)^{\frac14} U_{j,k,t}(\cR_{\frac{\pi}{k}}\, \cdot)\bigg|^2 dx \\
    & \qquad - 2 \int_{\RR^2} \bigg(\Sjk(-\Delta)^{\frac14} U_{j,k,t} \bigg) \bigg( \sum_{i=1}^k(-\Delta)^{\frac14} U_{i,k,t}(\cR_{\frac{\pi}{k}}\, \cdot) \bigg) dx \\
    & \quad = 2 \int_{\RR^2} \bigg| \Sjk(-\Delta)^{\frac14} U_{j,k,t}\bigg|^2 dx - 2 \int_{\RR^2} \bigg(\Sjk(-\Delta)^{\frac14} U_{j,k,t} \bigg) \bigg( \sum_{i=1}^k(-\Delta)^{\frac14} U_{i,k,t}(\cR_{\frac{\pi}{k}}\, \cdot) \bigg) dx. 
\end{align*}
Moreover, using that $U_{j,k,t}$ solves \eqref{E.Yamabe}, we infer that
\begin{equation*} 
\begin{aligned}
    & \int_{\RR^2} \bigg| \Sjk(-\Delta)^{\frac14} U_{j,k,t}\bigg|^2 dx = \Sjk \sum_{i=1}^k \int_{\RR^2} (-\Delta)^{\frac12} U_{j,k,t} U_{i,k,t} dx =  \Sjk \sum_{i=1}^k  \int_{\RR^2} U_{j,k,t}^3 U_{i,k,t}dx \\
    & \quad = k \|U\|_{L^4}^4 + \Sjk \sum_{i \neq j} \int_{\RR^2} U_{j,k,t}^3 U_{i,k,t}dx,
\end{aligned}
\end{equation*}
and so that
\begin{equation} \label{E.decompositionRotation}
\begin{aligned}
& \frac12\left\| \Sjk U_{j,k,t} - \Sjk U_{j,k,t} (\cR_{\frac{\pi}{k}}\, \cdot) \right\|_{\dot{H}^{\frac12}}^2 \\
& \quad = k \|U\|_{L^4}^4 + \Sjk \sum_{i \neq j} \int_{\RR^2} U_{j,k,t}^3 U_{i,k,t}
 -  \int_{\RR^2} \bigg(\Sjk(-\Delta)^{\frac14} U_{j,k,t} \bigg) \bigg( \sum_{i=1}^k(-\Delta)^{\frac14} U_{i,k,t}(\cR_{\frac{\pi}{k}}\, \cdot) \bigg) dx.
\end{aligned}
\end{equation}

Next, observe that
\begin{align*}
    & \Sjk \sum_{i \neq j} \int_{\RR^2} U_{j,k,t}^3 U_{i,k,t} = k \Sjk \sum_{i \neq j} \int_{\Om_1} U_{j,k,t}^3 U_{i,k,t} dx = k \sum_{i=2}^k \int_{\Om_1} U_{1,k,t}^3 U_{i,k,t} dx + k \sum_{j=2}^k \sum_{i\neq j} \int_{\Om_1} U_{j,k,t}^3 U_{i,k,t} dx  \\
    & \quad = k \sum_{i=2}^k \int_{\Om_1} U_{1,k,t}^3 U_{i,k,t} dx + k \int_{\Om_1} U_{1,k,t} \bigg( \sum_{j=2}^k U_{j,k,t}^3 \bigg) dx + k \sum_{j=2}^k \sum_{\substack{i=2\\i \neq j}}^k \int_{\Om_1} U_{j,k,t}^3 U_{i,k,t} dx  \\
    & \quad \leq k \int_{\Om_1} U_{1,k,t}^3 \bigg( \sum_{j=2}^k U_{j,k,t} \bigg) dx + k \int_{\Om_1} U_{1,k,t} \bigg( \sum_{j=2}^k U_{j,k,t} \bigg)^3 dx + k \int_{\Om_1} \bigg( \sum_{j=2}^k U_{j,k,t} \bigg)^4 dx. 
\end{align*}
For the first integral we use Lemma \ref{L.PremoselliVetois} with $\alpha=3$ and $\beta=1$,
\begin{align*}
    k \int_{\Om_1} U_{1,k,t}^3 \bigg( \sum_{j=2}^k U_{j,k,t} \bigg) dx\ls k\delta_k^2(k\log(k)+k\log(k)\delta_k^{-1})\ls \frac{1}{\log(k)}.
\end{align*}
For the second integral we use Lemma \ref{L.PremoselliVetois} with $\alpha=1$ and $\beta=3$,
\begin{align*}
    k \int_{\Om_1} U_{1,k,t} \bigg( \sum_{j=2}^k U_{j,k,t} \bigg)^3 dx\ls k\delta_k^2(k^2+k^3\log^3(k)k^{-1})\ls \frac{1}{k\log(k)}.
\end{align*}
For the third and last integral we use Lemma \ref{L.PremoselliVetois} with $\alpha=0$ and $\beta=4$,
\begin{align*}
k \int_{\Om_1} \bigg( \sum_{j=2}^k U_{j,k,t} \bigg)^4 dx\ls k\delta_k^2(k^3+k^4\log^4(k)k^{-2})\ls \frac{1}{\log^4(k)}.
\end{align*}
Thus, we conclude that
\begin{equation} \label{E.Uj3Ui}
 \Sjk \sum_{i \neq j} \int_{\RR^2} U_{j,k,t}^3 U_{i,k,t} dx \lesssim \frac{1}{\log(k)}.
\end{equation}

Likewise, observe that
\begin{align*}
    & \int_{\RR^2} \bigg(\Sjk(-\Delta)^{\frac14} U_{j,k,t} \bigg) \bigg( \Sjk(-\Delta)^{\frac14} U_{j,k,t}(\cR_{\frac{\pi}{k}}\cdot) \bigg) dx = \int_{\RR^2} \bigg(\Sjk U_{j,k,t}^3 \bigg) \bigg( \sum_{i=1}^k U_{i,k,t}(\cR_{\frac{\pi}{k}} \cdot) \bigg) dx \\
    & \quad \leq  \sum_{j=1}^{2k} \sum_{i\neq j}  \int_{\RR^2} U_{t\delta_k, \widetilde{\xi}_j}^3 U_{t\delta_k, \widetilde{\xi}_i} dx, 
\end{align*}
where
$$
\widetilde{\xi}_j = \sqrt{1-t\delta_k^2} \big( \cos\big( \tfrac{\pi(j-1)}{k}\big), \sin\big( \tfrac{\pi(j-1)}{k}\big) \big)\,.
$$
The integral on the right hand side can be estimated exactly as we did to prove \eqref{E.Uj3Ui}. Indeed, $\sum_{j=1}^{2k} U_{t\de_k, \widetilde{\xi}_j}$ can be seen as a generalization of $\sum_{j=1}^k U_{j,k,t}$ but with $2k$ peaks. Then, we get that
\begin{equation} \label{E.interactionRotation}
   \int_{\RR^2} \bigg(\Sjk(-\Delta)^{\frac14} U_{j,k,t} \bigg) \bigg( \Sjk(-\Delta)^{\frac14} U_{j,k,t}(\cR_{\frac{\pi}{k}}\cdot) \bigg) dx \lesssim \frac{1}{\log(k)}. 
\end{equation}
Combining \eqref{E.decompositionRotation} with \eqref{E.Uj3Ui} and \eqref{E.interactionRotation}, we infer that
$$
\left\| \Sjk U_{j,k,t} - \Sjk U_{j,k,t} (\cR_{\frac{\pi}{k} \cdot)} \right\|_{\dot{H}^{\frac12}}^2 \gtrsim k. 
$$
This lower bound gives us a contradiction with \eqref{E.phikUk} and \eqref{E.phikUpper} for $k$ large enough, and the result follows. 
\end{proof}

Next, we prove that the solutions to \eqref{E.Yamabe} we constructed in Theorem \ref{T.mainThmYamabe} are bounded and smooth. Actually, it is standard to show that any solution to \eqref{E.Yamabe} belonging to $\dot{H}^{\frac12}$ is bounded and smooth. 

\begin{proposition} \label{P.bounded}
    Let $\psi \in \dot{H}^{\frac12}$ be a solution to \eqref{E.Yamabe}. Then $\psi \in L^{\infty}$.
\end{proposition}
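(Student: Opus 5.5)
The plan is a Brezis--Kato / Moser-type bootstrap for the critical nonlinearity. Write the equation as
$$(-\Delta)^{\frac12}\psi = V\psi, \qquad V := \psi^2 \in L^2,$$
since $\psi\in\dot H^{\frac12}\hookrightarrow L^4$. The first step is to upgrade $\psi$ from $L^4$ to $L^p$ for every $p<\infty$. The key point is that the critical potential $V$ can be split as $V = V\mathbbm{1}_{\{|V|>M\}} + V\mathbbm{1}_{\{|V|\le M\}}$, where $\|V\mathbbm{1}_{\{|V|>M\}}\|_{L^2}$ is as small as we like by choosing $M$ large, and the second part is bounded.

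For the $L^p$ step we work through the harmonic extension $\Psi$ to $\RR^3_+$, or with the Stroock--Varopoulos type inequality
$$\int_{\RR^2} |(-\Delta)^{\frac14}(|\psi|^{\frac{q}{2}})|^2\,dx \le C_q \int_{\RR^2} (-\Delta)^{\frac12}\psi\,\cdot\,\psi|\psi|^{q-2}\,dx,$$
made rigorous by testing with truncations $\psi\min(|\psi|,L)^{q-2}$. Combined with Sobolev ($\dot H^{\frac12}\hookrightarrow L^4$), this gives
$$\||\psi|^{\frac q2}\|_{L^4}^2 \lesssim C_q\Big( \|V\mathbbm{1}_{\{|V|>M\}}\|_{L^2}\,\||\psi|^{\frac q2}\|_{L^4}^2 + M\|\psi\|_{L^q}^q\Big).$$
Choosing $M$ so that $C_q\|V\mathbbm{1}_{\{|V|>M\}}\|_{L^2}$ is small, we absorb the first term and obtain $\psi\in L^{2q}$ whenever $\psi\in L^q$. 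Starting at $q=4$ and iterating yields $\psi\in L^p$ for all $4\le p<\infty$. Here we need to let $L\to\infty$ in the truncations, using monotone convergence, and we need a priori finiteness of $\|\psi\|_{L^q}$, which holds by induction.

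Once $\psi\in L^p$ for all large $p$, we conclude boundedness via the integral representation $\psi = (-\Delta)^{-\frac12}(\psi^3)$. This representation is justified by the definition \eqref{43H12}, since $\psi^3\in L^{\frac43}$. We then use Hardy--Littlewood--Sobolev in its endpoint form: split the kernel $|x-y|^{-1}$ into $|x-y|<1$ and $|x-y|\ge 1$. The near part is bounded by Hölder, using $\psi^3\in L^r$ for some $r>2$ and the fact that $|\cdot|^{-1}\in L^{r'}(B_1)$ because $r'<2$. The far part is bounded using $\psi^3\in L^{s}$ with $s<2$, for instance $s=\frac43$, together with $|\cdot|^{-1}\mathbbm{1}_{\{|\cdot|\ge1\}}\in L^{s'}$ for $s'=4>2$. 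This gives $\|\psi\|_{L^\infty}\lesssim \|\psi\|_{L^{3r}}^3+\|\psi\|_{L^4}^3$.

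The main obstacle is the first step: because the nonlinearity is exactly critical, $V\in L^2$ gives no gain by Hölder alone, and one must use the small-tail splitting argument. The fractional chain-rule inequality with truncated test functions also requires care to make rigorous.
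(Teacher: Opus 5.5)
Your proof is correct, but it takes a different route from the paper. The paper gives no self-contained argument. It invokes the Moser iteration of \cite[Proposition 5.1.1]{DMV}, replacing the test function used there (adapted to nonnegative solutions) by the even convex truncated power $\varphi(t)=|t|^{\beta}$ for $|t|<T$, extended linearly for $|t|\geq T$. It then relies on the convexity inequality $(-\Delta)^{\frac12}\varphi(\psi)\leq\varphi'(\psi)(-\Delta)^{\frac12}\psi$ together with Sobolev. After a first small-tail step, the exponent is raised infinitely many times and the constants must be controlled along the whole sequence; \cite{SVWY22} is cited for the sign-changing details.

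You keep only the Brezis--Kato part of that scheme, namely the same splitting of $V=\psi^2$ into a small $L^2$ tail and a bounded part, implemented through the Stroock--Varopoulos inequality for the Gagliardo form. You then replace the iteration by the explicit Riesz potential $\psi=(-\Delta)^{-\frac12}(\psi^3)$, already available from \eqref{43H12}. This is shorter and more elementary, and odd truncations handle sign changes automatically. The Moser route, in exchange, needs no explicit kernel, so it transfers to other operators, domains and critical nonlinearities.

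Two small remarks. First, the truncated step closes cleanly if you pair $G_L(t)=t\min(|t|,L)^{q-2}$ with $H_L(t)=t\min(|t|,L)^{\frac{q-2}{2}}$. Then $(H_L')^2\leq\frac{q^2}{4(q-1)}G_L'$, which gives $\langle\psi,G_L(\psi)\rangle\geq\frac{4(q-1)}{q^2}\|H_L(\psi)\|_{\dot{H}^{\frac12}}^2$. Since also $\psi G_L(\psi)=H_L(\psi)^2$ and $H_L(\psi)\in L^4$ a priori, the absorption is legitimate. Second, you do not need all $L^p$: the single step $q=4\to 8$ already gives $\psi^3\in L^{\frac83}$ with $\frac83>2$, which is all your kernel splitting uses.
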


\begin{proof}
    For instance, this can be proved arguing as in \cite[Proposition 5.1.1]{DMV}. Indeed, one just has to change the definition of the function $\varphi$ introduced there as follows. Given $\be > 1$ and $T > 0$, we define
    $$
    \varphi(t) := \left\{
    \begin{aligned}
    & -\be T^{\be-1} (t+T) + T^{\be} \quad &&\textup{if } t \leq -T, \\
    & |t|^{\be} && \textup{if } -T < t < T, \\
    \, & \be T^{\be-1}(t-T) + T^{\be} && \textup{if } t \geq T.
    \end{aligned}
    \right.
    $$
    Having this new $\varphi$ at hand, the proof of this result follows as in \cite[Proposition 5.1.1]{DMV}. We refer to \cite[Section 3]{SVWY22} for some details concerning the sign-changing case. 
\end{proof}

\begin{corollary} \label{C.smooth}
    Let $\psi \in \dot{H}^{\frac12}$ be a solution to \eqref{E.Yamabe}. Then $\psi \in C^{\infty}$.
\end{corollary}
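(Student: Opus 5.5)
We start from Proposition \ref{P.bounded}, which gives $\psi \in L^{\infty}$. Since also $\psi \in \dot{H}^{\frac12} \hookrightarrow L^4$, interpolation yields $\psi \in L^p$ for every $4 \leq p \leq \infty$. Consequently $f := \psi^3 \in L^q$ for all $\frac43 \leq q \leq \infty$. The plan is a bootstrap on the representation
$$
\psi = (-\Delta)^{-\frac12}(\psi^3) = \frac{1}{2\pi}\int_{\RR^2} \frac{\psi(y)^3}{|x-y|}\,dy.
$$
This formula is valid because $\psi \in \dot{H}^{\frac12}$ is the unique $\dot{H}^{\frac12}$-solution of $(-\Delta)^{\frac12}u = \psi^3$ with $\psi^3 \in L^{\frac43}$, via \eqref{43H12}.

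\textbf{Step 1: H\"older regularity.} Split the kernel $|x-y|^{-1}$ into a near part $|x-y|<1$ and a far part. The near part is a convolution of $f \in L^\infty$ with an $L^1$ kernel. The far part is a convolution of $f \in L^{4/3}$ with a kernel in $L^4$. Hence $\psi$ is continuous. Standard Riesz potential estimates then give more. For $f \in L^\infty \cap L^{4/3}$, one can write
$$
|x-y|^{-1} - |z-y|^{-1}
$$
and split the integral at the scale $|x-z|$. This shows that $\psi \in C^{0,\alpha}$ for every $\alpha \in (0,1)$, with locally uniform bounds.

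\textbf{Step 2: bootstrap in H\"older spaces.} If $\psi \in C^{m,\alpha}$ with bounded derivatives, then $\psi^3 \in C^{m,\alpha}$ as well, and $\psi^3 \in L^{4/3}$. We then need the mapping property that $(-\Delta)^{-\frac12}$ takes $C^{m,\alpha}\cap L^{4/3}$ into $C^{m+1,\alpha}$. To obtain it, differentiate under the integral after splitting with a cutoff $\chi(x-y)$. On the smooth far piece, every derivative falls on the kernel, and the resulting kernels remain integrable against $L^{4/3}$ functions. On the near piece, write
$$
\partial^{\gamma}\!\int \chi(x-y)|x-y|^{-1} f(y)\,dy = \int \chi(z)|z|^{-1}\,\partial^{\gamma} f(x-z)\,dz
$$
for $|\gamma| \leq m$. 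One further derivative produces a Calder\'on--Zygmund type kernel $\nabla(\chi|z|^{-1})$, which behaves like $|z|^{-2}$. This kernel has cancellation, since $\nabla|z|^{-1}$ is odd, and a function acting on $C^{0,\alpha}$ through such a kernel lands in $C^{0,\alpha}$. Together, these give $\psi \in C^{m+1,\alpha}$.

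An alternative for this step is the harmonic extension of $\psi$ to $\RR^3_+$, which satisfies a Neumann problem with data $\psi^3$. Boundary Schauder estimates for the Neumann problem then raise the regularity by one order at each iteration.

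\textbf{Main obstacle.} The delicate point is Step 2's gain of a full derivative, which requires a Schauder-type estimate for $(-\Delta)^{\frac12}$. I would simply quote classical results here, for instance Silvestre's regularity for the fractional Laplacian: if $(-\Delta)^{s}u = f \in C^{m,\alpha}$ with $u \in L^\infty$, then $u \in C^{m+2s,\alpha}$ for $2s+\alpha$ non-integer. For $s=\frac12$, one can instead pick $\alpha$ slightly different at each step to avoid integer exponents. Iterating this with $f=\psi^3$ yields $\psi \in C^{\infty}$.
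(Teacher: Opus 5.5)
Your proof is correct and follows the paper's route. The paper's proof combines Proposition \ref{P.bounded} with interpolation to get $\psi\in L^p$ for $4\le p\le\infty$, then invokes a ``standard bootstrap argument''; you spell that bootstrap out on $\psi=(-\Delta)^{-\frac12}(\psi^3)$ via Riesz-potential H\"older estimates and one-derivative Schauder gains. A minor aside: for $s=\frac12$ the exponent $1+\alpha$ is never an integer when $\alpha\in(0,1)$, so you never need to adjust $\alpha$ between steps.
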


\begin{proof}
    By the previous lemma we know that $\psi \in L^p$ for all $4 \leq p \leq \infty$. Hence, the result follows by a standard bootstrap argument. 
\end{proof}

\subsection{Proof of Theorem \ref{T.mainThmSQG}} Taking into account the previous subsection, to conclude the proof of Theorem \ref{T.mainThmSQG} we are essentially missing to show that \eqref{E.distributionalConvergence} holds.

\begin{proof}[Proof of Theorem \ref{T.mainThmSQG}]
    Let $k_{\rm rad} \in \NN$ be as in Proposition \ref{P.nonradial}. By Theorem \ref{T.mainThmYamabe} we know that, for all $k \geq k_{\rm rad}$, there exists $\psi_k \in \dot{H}^{\frac12}$ solving \eqref{E.Yamabe}. Moreover, by Propositions \ref{P.nonradial} and \ref{P.bounded}, and Corollary \ref{C.smooth} we know that $\psi_k$ is nonradial, bounded and smooth. 

    Next, for all $k \geq k_{\rm rad}$, we set  $\theta_k := \psi_{k}^3$. Then, we have that $(\theta_k)_{k=k_{\rm rad}}^{\infty}$ is a sequence of smooth nonradial finite energy stationary solutions to \eqref{E.SQG}. Moreover, let us point out that, for all $k \geq k_{\rm rad}$, $\theta_k$ is $k$-fold symmetric (see \eqref{E.kfold}). Up to a relabeling, this is the sequence of solutions to \eqref{E.SQG} we were looking for. 

    At this point, to conclude the proof, we just have to show that \eqref{E.distributionalConvergence} holds. Let $\vp \in C_c^{\infty}(\RR^2)$ be fixed but arbitrary. Then, it follows that
    \begin{equation} \label{E.thetakU3}
         \int_{\RR^2} (\theta_k + U^3) \vp\, dx = \int_{\RR^2} (-\Delta)^{\frac12} (\psi_{k} + U) \vp\, dx =  \langle U_{k,t_k} + U + \phi_k, \vp \rangle =\Big \langle \Sjk U_{j,k,t_k}, \vp \Big\rangle + \langle \phi_k, \vp \rangle\,.
    \end{equation}
    
    Now, on the one hand, observe that
    \begin{align*}
        & \Big \langle \Sjk U_{j,k,t_k}, \vp \Big\rangle = \Sjk \int_{\RR^2} U_{j,k,t_k}^3 \vp dx = \sqrt{t_k \de_k} \, \Sjk \int_{\RR^2} U^3(y) \vp (t_k \de_k y + \xi_{j,k,t_k}) dy  \\ & \quad = \sqrt{t_k \de_k}\, \Sjk \Big( 2\pi \vp (\xi_{j,k,t_k}) + O\big(\|\vp\|_{C^2(\RR^2)}\,t_k \de_k\big) \Big) \\
        & \quad = \frac{2\pi\sqrt{t_k}}{\log k} \left( \frac{1}{k} \Sjk \vp(\xi_{j,k,t_k}) \right) + O\bigg( \frac{\|\vp\|_{C^2(\RR^2)}}{k^2 \log^3(k)} \bigg), \quad \textup{as } k \to \infty\,.
    \end{align*}
    Thus, taking into account that 
    $$
    t_k \to \frac{\pi^2}{2} \quad \textup{and} \quad 
    \frac{1}{k} \Sjk \de_{\xi_{j,k,t_k}} \rightharpoonup \frac{1}{2\pi}\sigma_{\partial \mathbb{D}}\,, \quad \textup{as } k \to \infty\,,
    $$
    we conclude that
    \begin{equation} \label{E.convergenceMain}
        \log k \Sjk U_{j,k,t_k}^3 \rightharpoonup \frac{\pi}{\sqrt{2}}\, \sigma_{\partial \mathbb{D}}\,, \quad \textup{as } k \to \infty\,.
    \end{equation}

    On the other hand, we know that $\|\phi_k\|_{\Hud} \lesssim (\log k)^{-1}$. Hence, setting $w_k := \log k \phi_k$ for all $k \in \NN$, we have that $(w_k)_k$ is a bounded sequence in $\Hud$. Thus, up to a subsequence, it follows that
    $$
   w_k \rightharpoonup w_{\infty}\ \textup{ in } \Hud \quad \textup{and} \quad w_k \to w_{\infty}\  \textup{ in } L_{loc}^p(\RR^2),\ 1 \leq p < 4\,, \quad \textup{as } k \to \infty\,,
    $$
    for some $w_\infty \in \dot{H}^{\frac12}$. Also, since $\phi_k$ is a solution to \eqref{E.YamabePhi}, we infer that $w_k \in \dot{H}^{\frac12}$ is a solution to 
    \begin{equation*}
        (-\Delta)^{\frac12} w_k - 3 U_{k,t_k}^2 w_k = \log k\, \cE_{k,t_k} + \log k\,\cN_{k,t_k}(\phi_k)\quad \textup{in } \RR^2\,.
    \end{equation*}
    In particular, taking into account the estimates for $\cE_{k,t_k}$ and $\cN_{k,t_k}(\phi_k)$ proven in Lemma \ref{L.error} and Proposition \ref{P.contraction} respectively, we get that, for all $\vp \in C_c^{\infty}(\RR^2)$,
    \begin{equation}
        \langle w_k, \varphi \rangle = 3 \int_{\RR^2} U_{k,t_k}^2 w_k \varphi\, dx + 3 \log k \sum_{j=1}^k \int_{\RR^2} U_{j,k,t_k} U^2 \varphi\, dx + O \bigg( \frac{\|\vp\|_{L^{\infty}(\RR^2)}}{\log k} \bigg)\,, \quad \textup{as } k \to \infty\,.
    \end{equation}
    
    Next, arguing as in Section \ref{S.2}, we get that
    $$
    3  U_{k,t_k}^2 w_k \rightharpoonup 3  U^2 w_{\infty}\,, \quad \textup{as } k \to \infty\,,
    $$
    and that
    $$
    \log k \Sjk U_{j,k,t_k} \rightharpoonup \frac{1}{2\sqrt{2}} \int_{\partial \mathbb{D}} \frac{1}{|\cdot-\,\xi|}\,d\sigma(\xi)\,, \quad \textup{as } k \to \infty\,.
    $$
    Hence, we conclude that $w_{\infty} \in \Hud$ is a solution to
    \begin{equation} \label{E.winfty}
    (-\Delta)^{\frac12} w_{\infty} - 3U^2 w_{\infty} = \frac{3}{2\sqrt{2}} U^2 \int_{\partial \mathbb{D}} \frac{1}{|\cdot-\,\xi|}\,d\sigma(\xi) \quad \textup{in } \RR^2\,,
    \end{equation}
    and so that the remainder in \eqref{E.distributionalConvergence} we were looking for is given by
    \begin{equation} \label{E.Rdistributional} 
    R = 3 U^2 \left( w_{\infty} + \frac{1}{2\sqrt{2}} \int_{\partial \mathbb{D}} \frac{1}{|\cdot-\xi|}d\sigma(\xi) \right).
    \end{equation}
    
    Finally, by a standard bootstrap argument in \eqref{E.winfty}, we see that the most singular part of $R$ is precisely
    $$
    3U^2 \int_{\partial \mathbb{D}} \frac{1}{|\cdot-\,\xi|}\,d\sigma(\xi)\,.
    $$
    It is then straightforward to check that $R \in \dot{H}^s \cap L^p$ for all $1 \leq p < \infty$ and all $0 < s < \frac12$. The distributional convergence in \eqref{E.distributionalConvergence} then follows combining \eqref{E.thetakU3}, \eqref{E.convergenceMain} and \eqref{E.Rdistributional}. 
\end{proof}

\section*{Acknowledgments}
The authors thank M. Medina for an enlightening discussion concerning Proposition \ref{P.nonradial}.

A.C. acknowledges financial support from the Severo Ochoa Programme for Centers of Excellence (Grants \nolinkurl{CEX2019-000904-S} and \nolinkurl{CEX-2023-001347-S}), funded by 
\nolinkurl{MCIN/AEI/10.13039/501100011033}. A.C. was also supported by the grants \nolinkurl{PID2020-114703GB-I00}, \nolinkurl{PID2024-158418NB-I00}, and \nolinkurl{RED2024-153842-T}, all funded by \nolinkurl{MICIU/AEI/10.13039/501100011033}.

A.J.F. is partially supported by the grants \nolinkurl{PID2023-149451NA-I00 of MCIN/AEI/10.13039/501100011033/FEDER, UE;}  and \nolinkurl{RYC2024-049142-I}, \nolinkurl{MICINN} (Spain).

C.G. has been supported by \nolinkurl{RYC2022-035967-I (MCIU/AEI/10.13039/501100011033} and
\nolinkurl{FSE+)}, and partially by Grants \nolinkurl{PID2022-140494NA-I00} and \nolinkurl{PID2022-137228OB-I00} funded
by \nolinkurl{MCIN/AEI/10.13039/501100011033/FEDER, UE,} by Grant \nolinkurl{C-EXP-265-UGR23} funded by
Consejeria de Universidad, Investigacion e Innovacion \& ERDF/EU Andalusia Program, and
by Modeling Nature Research Unit, project \nolinkurl{QUAL21-011}. Proyecto realizado con la Beca Leonardo a
Investigadores y Creadores Culturales 2024 de la Fundaci\'on BBVA.

\appendix

\section{Some useful calculations}\label{appendix}

\begin{lemma} \label{L.normZ+KZ}
    Let $0 < a < b < \infty$. Then
    $$
    \|Z_{k,t} + \bK[Z_{k,t}]\|_{\Hud}^2 = k\pi + O \Big( \frac{1}{\log k} \Big)\,, \quad \textup{as } k \to \infty\,.
    $$
\end{lemma}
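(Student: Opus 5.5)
We expand the square using the properties of the Kelvin transform. Since $\bK$ is an isometry of $\Hud$ and is self-adjoint (property iv), we get
$$
\|Z_{k,t}+\bK[Z_{k,t}]\|_{\Hud}^2 = 2\|Z_{k,t}\|_{\Hud}^2 + 2\langle Z_{k,t},\bK[Z_{k,t}]\rangle = 4\|Z_{k,t}\|_{\Hud}^2 + 2\langle Z_{k,t}, W_{k,t}\rangle,
$$
where, by Lemma \ref{KZkt},
$$
W_{k,t}(x):=(t\de_k)^{\frac52}(1-|x|^2)\Sjk \big((t\de_k)^2+|x-\xi_{j,k,t}|^2\big)^{-\frac32}.
$$
The proof therefore splits into two pieces: we need $\|Z_{k,t}\|_{\Hud}^2=\frac{k\pi}{4}+O(\frac1{\log k})$, and we need $\langle Z_{k,t},W_{k,t}\rangle = O(\frac1{\log k})$.

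\textbf{Main term.} Each $Z_{j,k,t}$ solves the linearized equation $(-\Delta)^{\frac12}Z_{j,k,t}=3U_{j,k,t}^2Z_{j,k,t}$. Hence
$$
\|Z_{k,t}\|_{\Hud}^2=\sum_{i,j}3\int U_{j,k,t}^2Z_{j,k,t}Z_{i,k,t}\,dx.
$$
By scaling invariance, each diagonal term equals $3\int U^2 (Z^{(0)})^2\,dy$. Computing in polar coordinates,
$$
\frac34\int \frac{(|y|^2-1)^2}{(1+|y|^2)^4}\,dy=\frac{3\pi}{4}\int_0^\infty\frac{(s-1)^2}{(1+s)^4}\,ds=\frac{3\pi}4\cdot\frac13=\frac\pi4 .
$$
So the diagonal contributes exactly $\frac{k\pi}{4}$.

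For the off-diagonal terms we use $|Z_{j,k,t}|\lesssim U_{j,k,t}$ and the $k$-fold symmetry. They are bounded by
$$
k\sum_{i\ne j}\int U_{j,k,t}^3U_{i,k,t}\,dx,
$$
up to the same symmetric rearrangement used in \eqref{E.Uj3Ui}. Done naively, this gives only $O(\frac1{\log k})$ in total, which is already enough. More precisely, applying Lemma \ref{L.PremoselliVetois} with $(\alpha,\beta)=(3,1),(1,3),(0,4)$ yields exactly the bound $\frac{1}{\log k}$, as in the proof of Proposition \ref{P.nonradial}.

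\textbf{Correction term.} We write
$$
\langle Z_{k,t},W_{k,t}\rangle=\int (-\Delta)^{\frac12}Z_{k,t}\, W_{k,t}\,dx=3\sum_j\int U_{j,k,t}^2Z_{j,k,t}\,W_{k,t}\,dx.
$$
We bound it using $|Z_{j,k,t}|\lesssim U_{j,k,t}$ and the pointwise estimate
$$
|W_{k,t}|\lesssim (t\de_k)^{\frac52}|1-|x|^2|\sum_i\big((t\de_k)^2+|x-\xi_{i,k,t}|^2\big)^{-\frac32}.
$$
The key observation is that $1-|x|^2$ vanishes on the unit circle, where the bubbles concentrate. Near $\xi_{1,k,t}$ we have $|1-|x|^2|\lesssim |x-\xi_{1,k,t}|+\de_k^2$. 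This gains one factor of $t\de_k|y|$ after rescaling by $y=(x-\xi_{1,k,t})/(t\de_k)$. The self term $j=i=1$ becomes $\lesssim k\,t\de_k\int U^3\cdot |y|\,(1+|y|^2)^{-\frac32}\,dy$, which is $O(k\de_k\log(1/\de_k))$ and hence small.

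\textbf{Main obstacle.} The interaction terms $i\ne j$, and the far-field region of the sector $\Om_1$, are where I expect the difficulty. There the factor $(t\de_k)^{\frac52}|x-\xi_i|^{-3}$ summed over $i$ produces $\de_k^{\frac52}k^3$-type sums (using \cite[(3.73)]{PV19} with $p=3$). I plan to treat them by splitting $\Om_1$ into the pieces $\Om_{1,1},\Om_{1,2},\Om_{1,3}$ as in Lemma \ref{L.PremoselliVetois}, which should give a total bound of order $k\de_k^{\frac12}\cdot k\cdot \de_k\lesssim (\log k)^{-1}$. Collecting the two pieces then gives $k\pi+O(\frac1{\log k})$.
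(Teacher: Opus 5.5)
The global structure of your argument is the paper's: the decomposition $\|Z_{k,t}+\bK[Z_{k,t}]\|_{\Hud}^2=4\|Z_{k,t}\|_{\Hud}^2+2\langle Z_{k,t},W_{k,t}\rangle$, the value $3\int U^2(Z^{(0)})^2\,dy=\frac{\pi}{4}$, the bound on the off-diagonal part of $\|Z_{k,t}\|_{\Hud}^2$ via $|Z_{j,k,t}|\le\frac12U_{j,k,t}$ and \eqref{E.Uj3Ui}, and the rescaled estimate of the self terms $i=j$ of the correction.

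\textbf{The gap.} The problem is the step you flag as the main obstacle. The interaction terms $i\neq j$ of $\langle Z_{k,t},W_{k,t}\rangle$ are only announced, not proved, and the announced count does not hold together. The bound $\sum_{i\neq 1}|x-\xi_{i,k,t}|^{-3}\lesssim k^3$ is available only at points at distance $\gtrsim 1/k$ from the other centres. So it does not control the tail of $U_{j,k,t}^2Z_{j,k,t}$ near $\xi_{i,k,t}$, which is where the $i$-th summand of $W_{k,t}$ is concentrated. Moreover, the final figure $k\de_k^{1/2}\cdot k\cdot\de_k$ does not follow from the $\de_k^{5/2}k^3$-type sums you describe.

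\textbf{The fix.} No splitting of $\Om_1$ is needed. What is missing is a global pointwise comparison, and it follows directly from your own observation that $1-|x|^2$ vanishes on the circle. Write $d=t\de_k$ and $r=|x-\xi_{i,k,t}|$, and use $|\xi_{i,k,t}|^2=1-d^2$. Then $|1-|x|^2|\le d^2+2r+r^2$, so $d^2|1-|x|^2|\le 2d\,(d^2+r^2)$ for $d\le 1$. Hence
\[
(t\de_k)^{\frac52}\,\frac{|1-|x|^2|}{\big((t\de_k)^2+|x-\xi_{i,k,t}|^2\big)^{\frac32}}\le 2t\de_k\,U_{i,k,t}(x)\quad\text{for all } x\in\RR^2 .
\]
The paper uses the weaker form $(t\de_k)^{\frac52}|1-|x|^2|\lesssim\sqrt{t\de_k}\,\big((t\de_k)^2+|x-\xi_{i,k,t}|^2\big)$, which says each summand of $W_{k,t}$ is dominated by $U_{i,k,t}$. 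Combined with $|Z_{j,k,t}|\le\frac12U_{j,k,t}$, this bounds the whole $i\neq j$ sum by $\Sjk\sum_{i\neq j}\int U_{j,k,t}^3U_{i,k,t}\,dx\lesssim\frac{1}{\log k}$ via \eqref{E.Uj3Ui}, which closes the proof. If you keep the factor $t\de_k$, the entire correction $2\langle Z_{k,t},W_{k,t}\rangle$ is in fact $O(k\de_k)$, since $\Sjk\sum_{i=1}^k\int U_{j,k,t}^3U_{i,k,t}\,dx=k\|U\|_{L^4}^4+O(\frac{1}{\log k})$.
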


\begin{proof}
    We start by proving that
    \begin{equation} \label{E.ZktExpansion}
    \|Z_{k,t}\|_{\Hud}^2 = \frac{k\pi}{4} + O \Big( \frac{1}{\log k} \Big)\,, \quad \textup{as } k \to \infty\,.
    \end{equation}
    Indeed, by direct but tedious calculations we have that
    \begin{align*}
        \|Z_{k,t}\|_{\Hud}^2 & = \int_{\RR^2} \Big| \Sjk (-\Delta)^{\frac14} Z_{j,k,t} \Big|^2 dx = 3 \Sjk \sum_{i=1}^k \int_{\RR^2} U_{j,k,t}^2 Z_{j,k,t} Z_{i,k,t} \,dx\\
        & = 3 \Sjk \int_{\RR^2} U(y)^2 Z^{(0)}(y)^2 dy + 3 \Sjk \sum_{i \neq j} \int_{\RR^2} U_{j,k,t}^2 Z_{j,k,t} Z_{i,k,t} \,dx \\
        & = \frac{k\pi}{4} + 3  \Sjk \sum_{i \neq j} \int_{\RR^2} U_{j,k,t}^2 Z_{j,k,t} Z_{i,k,t} \,dx\,.
    \end{align*}
    Also, using that 
    \begin{equation} \label{E.comparisonZU}
    |Z_{j,k,t}(x)| \leq \frac12 U_{j,k,t}(x) \quad \textup{for all } j \in \{1, \ldots,k\}\,,
    \end{equation}
    and \eqref{E.Uj3Ui}, we infer that
    $$
    \bigg|\Sjk \sum_{i \neq j } \int_{\RR^2} U_{j,k,t}^2 Z_{j,k,t} Z_{i,k,t} \,dx \bigg| \leq \frac12  \Sjk \sum_{i \neq j } \int_{\RR^2} U_{j,k,t}^3 U_{i,k,t} \,dx \lesssim \frac{1}{\log k}\,,
    $$
    and so that \eqref{E.ZktExpansion} holds. 

    Having \eqref{E.ZktExpansion} at hand, we can now conclude the proof. Arguing as in Lemma \ref{KZkt}, we get that
    \begin{equation} \label{E.Zkt+KelvinExpansion}
    \begin{aligned}
    & \|Z_{k,t} + \bK[Z_{k,t}]\|_{\Hud}^2 = 4 \|Z_{k,t}\|_{\Hud}^2 + 2 \Sjk \sum_{i =1}^k \bigg\langle Z_{j,k,t}, (t\de_k)^{\frac52} \frac{1-|x|^2}{(|x-\xi_{i,k,t}|^2+ (t\de_k)^2)^{\frac32}} \bigg \rangle \\
    & \quad = k\pi + 2 \Sjk \sum_{i =1}^k \bigg\langle Z_{j,k,t}, (t\de_k)^{\frac52} \frac{1-|x|^2}{(|x-\xi_{i,k,t}|^2+ (t\de_k)^2)^{\frac32}} \bigg \rangle + O \Big(\frac1{\log k} \Big)\,.
    \end{aligned}
    \end{equation}
    Hence, to conclude the proof, we just have to handle the scalar product. First, note that
    \begin{align*}
&\bigg\langle Z_{j,k,t}, \,(t\delta_k)^\frac{5}{2}\frac{1-|x|^2}{(|x-\xi_{i,k,t}|^2+(t\delta_k)^2)^\frac{3}{2}}\bigg\rangle= (t\delta_k)^\frac{5}{2}
\int_{\RR^2} (-\Delta)^\frac{1}{2}Z_{j,k,t}(x) \frac{1-|x|^2}{(|x-\xi_{i,k,t}|^2+(t\delta_k)^2)^\frac{3}{2}} \,dx \\
& \quad =3 (t\delta_k)^\frac{5}{2} \int_{\RR^2}U_{j,k,t}(x)^2 Z_{j,k,t}(x) \frac{1-|x|^2}{(|x-\xi_{i,k,t}|^2+(t\delta_k)^2)^\frac{3}{2}} \,dx\,,
\end{align*}
and so that
\begin{align*}
    & 2 \Sjk \sum_{i =1}^k \bigg\langle Z_{j,k,t}, (t\de_k)^{\frac52} \frac{1-|x|^2}{(|x-\xi_{i,k,t}|^2+ (t\de_k)^2)^{\frac32}} \bigg \rangle \\
    & \quad = 6 (t\de_k)^{\frac52} \Sjk \int_{\RR^2}U_{j,k,t}(x)^2 Z_{j,k,t}(x) \frac{1-|x|^2}{(|x-\xi_{j,k,t}|^2+(t\delta_k)^2)^\frac{3}{2}} \,dx \\
    & \qquad + 6(t\de_k)^{\frac52} \Sjk \sum_{i \neq j} \int_{\RR^2} U_{j,k,t}(x)^2 Z_{j,k,t}(x) \frac{1-|x|^2}{(|x-\xi_{i,k,t}|^2+(t\delta_k)^2)^\frac{3}{2}} \,dx =: J + I_{\neq j}\,.
\end{align*}
We deal with $J$ and $I_{\neq j}$ separately. First, for the $J$ term, we see that
$$
J = 6 \Sjk \int_{\RR^2} U(y)^2 Z^{(0)}(y)\, \frac{(t\de_k)^2(1-|y|^2) -2 t\de_k y \cdot \xi_{j,k,t}}{(1+|y|^2)^{\frac32}}\, dy =: J_1 + J_2\,.
$$
Moreover, one can easily check that
\begin{align*}
    & |J_1| = 3 k (t\de_k)^2 \int_{\RR^2} \frac{(|y|^2-1)^2}{(1+|y|^2)^4} \, dy \lesssim \frac{1}{k^3 \log^4 (k)}\,.\\
    & |J_2| \lesssim k t \de_k \int_{\RR^2} U(y)^2 Z^{(0)}(y) \frac{|y|}{(1+|y|^2)^{\frac32}}\, dy \lesssim \frac{1}{k \log^2(k)}\,.
\end{align*}

On the other hand, we combine the fact that
$$
|(t\de_k)^{\frac52} (1-|x|^2)| \lesssim \sqrt{t\de_k} ((t\de_k)^2 + |x-\xi_{i,k,t}|^2)\,, \quad \textup{for all } i \in \{1, \ldots,k\}\,,
$$
with \eqref{E.comparisonZU} and \eqref{E.Uj3Ui}, and get that
$$
|I_{\neq j}| \lesssim \Sjk \sum_{i \neq j } \int_{\RR^2} U_{j,k,t}^3 U_{i,k,t} \,dx \lesssim \frac{1}{\log k}\,.
$$
The result follows by substituting the estimates of $|J|$ and $|I_{\neq j}|$ into \eqref{E.Zkt+KelvinExpansion}.
\end{proof} 

\begin{proof}[Proof of \eqref{E.ckestimate}]
    First of all, observe that
    \begin{align*}
        & \langle \phi_k - (-\Delta)^{-\frac12} (3U_{k,t_k}^2 \phi_k), Z_{k,t_k} \rangle = \int_{\RR^2} \Big( (-\Delta)^{\frac12} Z_{k,t_k} - 3 U_{k,t_k}^2 Z_{k,t_k} \Big) \phi_k dx \\
        & \quad = 3 \int_{\RR^2} \bigg( \Sjk U_{j,k,t_k}^2 Z_{j,k,t_k} - \bigg( \Sjk U_{j,k,t_k} - U \bigg)^2  \sum_{i=1}^k Z_{i,k,t_k}  \bigg) \phi_k dx \\
        & \quad = 3 \int_{\RR^2} \bigg( \Sjk U_{j,k,t_k}^2 Z_{j,k,t_k} - \bigg( \Sjk U_{j,k,t_k} \bigg)^2 \sum_{i=1}^k Z_{i,k,t_k} \bigg) \phi_k dx  \\
        & \qquad - 3 \int_{\RR^2} U^2 \sum_{i=1}^k Z_{i,k,t_k} \phi_k dx + 6 \int_{\RR^2} U \sum_{j=1}^k \sum_{i=1}^k U_{j,k,t_k} Z_{i,k,t_k} \phi_k dx
    \end{align*}
    Moreover, using \eqref{E.comparisonZU}, we easily see that
    $$
    \bigg| \int_{\RR^2} U^2 \sum_{i=1}^k Z_{i,k,t_k} \phi_k dx \bigg| \lesssim \int_{\RR^2 } \cE_{k,t_k}^{(3)} |\phi_k| dx \,,
    $$
    and that
    $$
    \bigg| \int_{\RR^2} U \sum_{j=1}^k \sum_{i=1}^k U_{j,k,t_k} Z_{i,k,t_k} \phi_k dx \bigg| \lesssim \int_{\RR^2} \cE_{k,t_k}^{(2)} |\phi_k| dx\,,
    $$
where we recall that $\cE_{k,t}^{(\ell)}$, $\ell \in \{1,2,3\}$, were defined in the proof of Lemma \ref{L.error}. Even more, combining \eqref{E.Error3}, \eqref{E.error2} and \eqref{E.phikSectorL4} we easily get that
    $$
     \bigg| \int_{\RR^2} U^2 \sum_{i=1}^k Z_{i,k,t_k} \phi_k dx \bigg| +   \bigg| \int_{\RR^2} U \sum_{j=1}^k \sum_{i=1}^k U_{j,k,t_k} Z_{i,k,t_k} \phi_k dx \bigg| \lesssim \frac{\sqrt{k}}{\log k}\,.
    $$

    On the other hand, by direct but tedious calculations, using again \eqref{E.comparisonZU}, we get that 
    \begin{align*}
        & \bigg| \int_{\RR^2} \bigg( \Sjk U_{j,k,t_k}^2 Z_{j,k,t_k} - \bigg( \Sjk U_{j,k,t_k} \bigg)^2 \sum_{i=1}^k Z_{i,k,t_k} \bigg) \phi_k dx \bigg| \\
        & \quad = k \bigg| \int_{\Om_1} \bigg( \sum_{j=1}^k \sum_{i \neq j} U_{j,k,t_k}^2 Z_{i,k,t_k} + 2 \sum_{j=1}^k \sum_{i \neq j} U_{j,k,t_k} U_{i,k,t_k} Z_{j,k,t_k} + \Sjk \sum_{i \neq j} \sum_{l\neq i,j} U_{j,k,t_k} U_{i,k,t_k} Z_{l,k,t_k} \bigg) \phi_k dx \bigg| \\
        & \quad \lesssim k \int_{\Om_1} \bigg( U_{1,k,t_k}^2 \sum_{j=2}^k U_{j,k,t_k} + U_{1,k,t_k} \bigg(\sum_{j=2}^k U_{j,k,t_k} \bigg)^2 + \bigg( \sum_{j=2}^k U_{j,k,t_k} \bigg)^3 \bigg) |\phi_k| dx \,.
    \end{align*}
    Arguing exactly as we did to estimate $\cE_{k,t}^{(1)}$ in the proof of Lemma \ref{L.error}, we conclude that
    $$
    \bigg| \int_{\RR^2} \bigg( \Sjk U_{j,k,t_k}^2 Z_{j,k,t_k} - \bigg( \Sjk U_{j,k,t_k} \bigg)^2 \sum_{i=1}^k Z_{i,k,t_k} \bigg) \phi_k dx \bigg| \lesssim \frac{\sqrt{k}}{\log^3(k)}\,,
    $$
    and so that
    $$
    | \langle \phi_k - (-\Delta)^{-\frac12} (3U_{k,t_k}^2 \phi_k), Z_{k,t_k} \rangle| \lesssim \frac{\sqrt{k}}{\log k}\,.
    $$
    Combining this estimate with the definition of the coefficient $c_k$ (cf. \eqref{ck}) and Lemma \ref{L.normZ+KZ}, we conclude that \eqref{E.ckestimate} holds. 
\end{proof}

\bibliographystyle{amsplain}

\begin{thebibliography}{99}\frenchspacing



\bibitem{abejeongserrano} K. Abe, J. G\'omez-Serrano, I.-J. Jeong, Homogeneous steady states for the generalized surface quasi-geostrophic equations. \href{https://arxiv.org/abs/2510.03009}{arXiv:2510.03009}.
\bibitem{ADdPMW21} W. Ao, J. D\'avila, M. del Pino, M. Musso, J. Wei, Travelling and rotating solutions to the generalized
inviscid surface quasi-geostrophic equation, Trans. Amer. Math. Soc. 374(9) (2021) 6665--6689.

\bibitem{BSV19} T. Buckmaster, S. Shkoller, V. Vicol, Nonuniqueness of weak solutions to the SQG equation, Comm. Pure Appl. Math., 72(9) (2019) 1809--1874.

\bibitem{CCGS20} A. Castro, D. C\'ordoba, J. G\'omez-Serrano, Global smooth solutions for the inviscid SQG equation, Mem. Amer. Math. Soc. 266 (2020) 89pp.

\bibitem{CFMS25} A. Castro, D. Faraco, F. Mengual, M. Solera, Unstable vortices, sharp non-uniqueness with forcing, and global smooth solutions for the SQG equation. \href{https://arxiv.org/abs/2502.10274}{arXiv:2502.10274}.

\bibitem{CG11} S.-Y.~A. Chang, M.~M. Gonz\'alez, Fractional Laplacian in conformal geometry, Adv. Math. 226 (2) (2011) 1410--1432.
 
\bibitem{CKW25} H. Chen, S. Kim, J. Wei, Sharp quantitative stability estimates for critical points of fractional Sobolev inequalities, Int. Math. Res. Not. 12 (2025) 36pp. 

\bibitem{CLO06} W. Chen, C. Li, B. Ou, Classification of solutions for an integral equation. Comm. Pure Appl. Math. 59 (3) (2006) 330--343.

\bibitem{CKL21} X. Cheng, H. Kwon, D. Li, Non-uniqueness of steady-state weak solutions to the surface quasigeostrophic equations, Comm. Math. Phys. 388 (3) (2021) 1281--1295.


\bibitem{CMT94} P. Constantin, A. J. Majda, E. Tabak, Formation of strong fronts in the 2-D quasigeostrophic thermal
active scalar Nonlinearity 7(6) (1994), 1495–1533.

\bibitem{CLZpp} D. C\'ordoba, J. Lucas-Manch\'on, L. Mart\'inez-Zoroa, Strong ill-posedness and non-existence in Sobolev spaces for generalized-SQG, Nonlinearity 38 (8) (2025).

\bibitem{CordobaZoroa22} D. C\'ordoba, L. Mart\'inez-Zoroa, Non existence and strong ill-posedness in $C^k$ and Sobolev spaces for SQG, Adv. Math. 407 (2022) 74pp.

\bibitem{CZOpp} D. C\'ordoba, L. Mart\'inez-Zoroa, W. S. O\.za\'nski, Instantaneous continuous loss of regularity for the SQG equation, Adv. Math. 481 (2025) 49pp.


\bibitem{DP23} M. Dai, Q. Peng, Non-unique stationary solutions of forced SQG, \href{https://arxiv.org/abs/2302.03283}{arXiv:2302.03283}.
 
\bibitem{DdPS13} J. D\'{a}vila, M. del Pino, Y. Sire, Nondegeneracy of the bubble in the critical case for nonlocal equations, Proc. Amer. Math. Soc. 141 (2013) 3865--3870.

\bibitem{dPMPP11}  M. del Pino, M. Musso, F. Pacard, A. Pistoia, Large energy entire solutions for the Yamabe equation, J. Differential Equations 251(9) (2011) 2568--2597

\bibitem{DMV} S. Dipierro, M. Medina, E. Valdinoci, \textit{Fractional elliptic problems with critical growth in the whole of $\RR^n$,} volume 15 of Appunti. Scuola Normale Superiore di Pisa. Edizioni della Normale, Pisa (2017).

\bibitem{GM16} D. Garrido, M. Musso, Entire sign-changing solutions with finite energy to the fractional Yamabe equation, Pacific J. Math. 283 (1) (2016) 85--114. 

\bibitem{GSPSY21} J. G\'omez-Serrano, J. Park, J. Shi, Y. Yao, Symmetry in stationary and uniformly-rotating solutions of
active scalar equations, Duke Math. J. 170 (13) (2021) 2957--3038.

\bibitem{GQ13} M.~M. Gonz\'alez, J. Qing, Fractional conformal Laplacians and fractional Yamabe problems, Anal. PDE 6 (2013) 1535--1576.


\bibitem{GS19} P. Gravejat, D. Smets, Smooth travelling-wave solutions to the inviscid surface quasi-geostrophic equation, Int. Math. Res. Not. 6 (2019) 1744--1757.

\bibitem{HH10} H. Hance-Olsen, H. Holden, The Kolmogorov–Riesz compactness theorem, Expo. Math. 28 (2010) 385--394.


\bibitem{jeongkimsqg} I.-J. Jeong, J. Kim, Strong ill-posedness for SQG in critical Sobolev spaces, Anal. PDE 17 (1) (2024) 133--170.



\bibitem{L04} Y.Y. Li, Remark on some conformally invariant integral equations: the method of moving spheres. J. Eur. Math. Soc. 6 (2) (2004) 153--180.


\bibitem{LZ95} Y.Y. Li, M. Zhu, Uniqueness theorems through the method of moving spheres. Duke Math. J. 80 (2) (1995) 383--417.

\bibitem{M08} F. Marchand. Existence and regularity of weak solutions to the quasi-geostrophic equations in the spaces $L^p$ or $\dot{H}^{-\frac12}$ Comm. Math. Phys., 277(1) (2008), 45–67.

\bibitem{Mazya} V.G. Maz'ya, {\it Sobolev spaces with applications to elliptic partial differential equations}, second, revised and augmented edition,  Grundlehren der mathematischen Wissenschaften, 342, Springer, Heidelberg, 2011.

\bibitem{MM19} M. Medina, M. Musso, Doubling nodal solutions to the Yamabe equation in $\RR^n$ with maximal rank, J. Math. Pures. Appl. 152(9) (2021) 145--188.

\bibitem{pacual-caballo} M. M. G. Pacual-Caballo, Stationary homogeneous solutions for the inviscid SQG equation. \href{https://arxiv.org/abs/2510.03108}{arXiv:2510.03108}.

\bibitem{PV19} B. Premoselli, J. V\'etois, Compactness of sign-changing solutions to scalar curvature-type equations with bounded negative part, J. Differential Equations 266 (2019) 7416--7458.


\bibitem{R95} S. G. Resnick, Dynamical problems in non-linear advective partial differential equations. 1995. PhD thesis, University of Chicago, Department of Mathematics.

\bibitem{RV11} F. Robert, J. V\'etois, A general theorem for the construction of blowing-up solutions to some elliptic nonlinear equations via Lyapunov-Schmidt's reduction, \textit{Concentration compactness and profile decomposition, Bangalore} (2011), Trends Math., Springer, Basel (2013) 85--116.  
 

\bibitem{SVWY22} X. Su, E. Valdinoci, Y. Wei, J. Zhang, Regularity results for solutions of mixed local and nonlocal elliptic equations, Math. Z. 302(3) (2022) 1855--1878. 

\bibitem{TaylorII} M. Taylor, \textit{Partial Differential Equations II: Qualitative studies of linear equations.} Applied Mathematical Sciences. Springer
New York, 2010.

\bibitem{WY10} J. Wei, S. Yan, Infinitely many solutions for the prescribed scalar curvature problem on $\mathbb{S}^N$, J. Funct. Anal. 258 (9) (2010) 3048--3081.

\bibitem{W05} J. Wu, Solutions of the 2D quasi-geostrophic equation in Hölder spaces. Nonlinear Anal., 62(4) (2005), 579–594.

\end{thebibliography}

\end{document}